\documentclass[10pt]{amsart}

\RequirePackage{amsthm,amsmath,amsfonts,amssymb}
\RequirePackage[numbers]{natbib}

\usepackage{graphicx} \usepackage{enumerate} \usepackage{multicol}
\usepackage{subcaption}
\usepackage{mathrsfs} \usepackage[all,cmtip]{xy}
\usepackage{hyperref}
\usepackage[color=blue!20,textsize=footnotesize]{todonotes}

\newcounter{dummy} \numberwithin{dummy}{section}
\newtheorem{theorem}[dummy]{Theorem}

\newtheorem{lemma}[dummy]{Lemma}
\newtheorem{definition}[dummy]{Definition}
\newtheorem{proposition}[dummy]{Proposition}
\theoremstyle{remark}
\newtheorem{remark}[dummy]{Remark}
\newtheorem{example}[dummy]{Example}


\newcommand{\calH}{\mathcal{H}}
\newcommand{\calE}{\mathcal{E}}
\newcommand{\calR}{\mathcal{R}}
\newcommand{\calV}{\mathcal{V}}
\newcommand{\calP}{\mathcal{P}}
\newcommand{\calM}{\mathcal{M}}

\newcommand{\bsigma}{\boldsymbol \sigma}
\newcommand{\bpi}{\boldsymbol \pi}
\newcommand{\bgamma}{\boldsymbol \gamma}
\newcommand{\bSigma}{\boldsymbol \Sigma}
\newcommand{\blambda}{\boldsymbol \lambda}
\newcommand{\brho}{\boldsymbol \rho}


\DeclareMathOperator{\OM}{OM}

\DeclareMathOperator{\id}{id}

\DeclareMathOperator{\pr}{pr}
\DeclareMathOperator{\rank}{rank}
\DeclareMathOperator{\spn}{span}
\DeclareMathOperator{\tr}{tr}


\DeclareMathOperator{\Ort}{O}
\DeclareMathOperator{\DIV}{div}

\DeclareMathOperator{\Ad}{Ad}
\DeclareMathOperator{\ad}{ad}
\DeclareMathOperator{\GL}{GL}

\newcommand{\Lie}{\mathrm{Lie}}

\DeclareMathOperator{\SO}{SO}

\newcommand{\ptr}{/ \! /}
\newcommand{\tptr}{/ \! \tilde /}
\newcommand{\cptr}{/ \! \check /}
\newcommand{\ve}{\varepsilon}
\DeclareMathOperator{\Dev}{Dev}

\title[Most probable paths for developed processes]{Most probable paths for developed processes}

\author[E.~Grong and S.~Sommer]{Erlend Grong and Stefan Sommer}

\address{Department of Mathematics,
University of Bergen}
\email{erlend.grong@uib.no}

\address{Department of Computer Science,
University of Copenhagen}
\email{sommer@di.ku.dk}

\subjclass[2010]{62R30,60D05,53C17}

\keywords{affine development,
diffusion processes,
most probable paths,
Onsager-Machlup functional}

\thanks{
The first author is supported by the grant GeoProCo from the Trond Mohn Foundation - Grant TMS2021STG02 (GeoProCo). The second author is supported by the Villum Foundation Grants 22924 and 40582, and the Novo Nordisk Foundation grant NNF18OC0052000.
}

\begin{document}
\begin{abstract}Optimal paths for the classical Onsager-Machlup function determining most probable paths between points on a manifold are only explicitly identified for specific processes, for example the Riemannian Brownian motion. This leaves out large classes of manifold-valued processes such as processes with parallel transported non-trivial diffusion matrix, processes with rank-deficient generator and sub-Riemannian processes, and push-forwards to quotient spaces. In this paper, we construct a general approach to definition and identification of most probable paths by measuring the Onsager-Machlup function on the anti-development of such processes. The construction encompasses large classes of manifold-valued process and results in explicit equation systems for the paths that we denote \emph{development most probable paths}. We define and derive these results and apply them to several cases of stochastic processes on Lie groups, homogeneous spaces, and landmark spaces appearing in shape analysis.
\end{abstract}

\maketitle

\section{Introduction}
The Onsager-Machlup function \cite{onsagerFluctuationsIrreversibleProcesses1953,machlupFluctuationsIrreversibleProcess1953} considered as a Lagrangian allows to identify most probable paths between points on manifolds for certain diffusion processes. The function measures the probability that a process sojourns in $\varepsilon$-tubes around differentiable paths in the $\varepsilon\to 0$ limit. First covering Euclidean space Brownian motion, the Onsager-Machlup function has been derived for Brownian motions on Riemannian manifolds \cite{takahashiProbabilityFunctionalsOnsagermachlup1981a,watanabeStochasticDifferentialEquations1981}
with extension to, e.g., time-inhomogeneous diffusions~\cite{CP14} and pinned-diffusions \cite{haraLagrangianPinnedDiffusion1996}. Most probable paths and the Onsager-Machlup function have also been treated in infinite dimensional manifolds \cite{demboOnsagerMachlupFunctionalsMaximum1991,capitaineOnsagerMachlupFunctionalElliptic2000, bardinaOnsagerMachlupFunctional2003,grongMostProbablePaths2022}. Still, only limited classes of manifold-valued processes are covered in the literature. Examples of processes that are not covered include processes defined by parallel transported anisotropic Brownian motion, sub-Riemannian processes, push-forwards through a projection from, for example, a Lie group to a homogeneous space, and many infinite dimensional non-linear shape processes.

Large classes of manifold-valued processes can be constructed by development of Euclidean semi-martingales to the manifold. Inspired by the constructions in \cite{sommerModellingAnisotropicCovariance2017,grongMostProbablePaths2022} that define most probable paths for anisotropic (non-Markovian) diffusion processes using the Onsager-Machlup function applied to the anti-development of the processes, we here construct a general approach to defining and identifying most probable paths for processes that are constructed by development. We call such curves \emph{developed most probable paths} or dMPPs for short, which are paths connecting two given points $x,y \in M$ whose anti-development is the most probable. This terminology allows us to derive differential equation systems for the resulting paths that both identify the set of dMPPs and allows numerical integration. While not capturing the volume distortion of the development map (see Section~\ref{sec:TrueMPPs} describing the usual definition of most probable paths and also Remark~\ref{re:distortion}), we argue that such curves are both intrinsically interesting and that they are relevant when the criteria for optimality relate to the Euclidean process that drives the developed process. Particularly, the anti-development can be seen as the processes that \emph{drives} the development, and the developed most probable paths thus measures path probability on the driving processes.

In \cite{sommerModellingAnisotropicCovariance2017,grongMostProbablePaths2022}, the dMPPs were identified for Brownian motions mapped through the frame bundle to the manifold. In this paper, we generalize beyond this specific setting to cover developed processes that include rank-deficient and time-varying diffusion field, and added drift on the manifold. We exemplify the results in several cases that are important in applications, including invariant processes on Lie groups and their push-forwards to homogeneous spaces, and, in the infinite dimensional case, paths of diffeomorphisms between landmark configurations as often encountered in shape analysis \cite{younesShapesDiffeomorphisms2010}. Stochastic shape analysis has applications in medical imaging and biology, and the presented results allows to find the most probable transformations between, e.g., a species shape and its immediate evolutionary ancestor compared to the geodesic transformations that are currently most used in shape analysis and where stochasticity is not taken into account.

\subsection{Related work}
Since the original work by Onsager and Machlup, \cite{onsagerFluctuationsIrreversibleProcesses1953,machlupFluctuationsIrreversibleProcess1953}, the Onsager-Machlup function and most probable paths have been the subject of many works including interpretation as a diffusion process Lagrangian \cite{bachOnsagerMachlupFunctionLagrangian1978}, extensions to manifolds
\cite{takahashiProbabilityFunctionalsOnsagermachlup1981a,watanabeStochasticDifferentialEquations1981,fujitaOnsagerMachlupFunctionDiffusion1982},
to time-inhomogeneous diffusions~\cite{CP14}, and as pinned-diffusions \cite{haraLagrangianPinnedDiffusion1996}. Extensions to infinite dimensional include \cite{demboOnsagerMachlupFunctionalsMaximum1991,capitaineOnsagerMachlupFunctionalElliptic2000, bardinaOnsagerMachlupFunctional2003,grongMostProbablePaths2022}. These works generally cover elliptic diffusion processes, and thus do not apply to the non-Markovian and sub-elliptic processes that appear from natural operations in differential geometry such as projection to a quotient space, or if a process driven by finite dimensional noise takes values in an infinite dimensional space. It is the purpose of this paper to handle these cases, albeit with the different approach of measuring path probability on the anti-development of the processes.

\subsection{Outline}
In Section~\ref{sec:context}, we set the context, survey the fundamental constructions used in the paper including development and anti-development of processes and prove the first result on variations of curves and development. Section~\ref{sec:eqs} contains the main results of the paper with the general equations for developed most probable paths. We treat multiple particular cases in Section~\ref{sec:examples}. In Section~\ref{sec:submersions}, we cover paths that result as projections through surjective submersions and where the dynamics can be lifted to the top space, e.g., homogeneous spaces. Section~\ref{sec:diffeos} covers Sobolev diffeomorphisms and actions on those on landmark spaces as appearing in shape analysis. We emphasize the fact that we can consider the landmark spaces as the base space of a submersion from the Lie group of Sobolev diffeomrophisms, so part of the motivation for studying Lie groups in Section~\ref{sec:examples} and submersions in Section~\ref{sec:submersions} is their eventual application in Section~\ref{sec:diffeos}. We end the paper with numerical examples on several manifolds in Section~\ref{sec:numerics}, which include Lie groups and landmarks. 

\section{Affine transport and development}
\label{sec:context}
In this section, we will make precise the context of developed processes, state general assumptions and derive first results linking the variation of curves with variations of their development. This provides the foundation of the identification of development most probable paths in the following sections.

\subsection{Affine transport}
Let $M$ be a manifold of dimension $d$ with a connection $\nabla$ and $o \in M$ a chosen point. Affine connections give a correspondence between curves in $T_oM$ starting at $0$ and curves in $M$ starting at $o$. For a smooth curve $[0,T] \to T_oM$, $t \mapsto \omega_t$ with $\omega_0 = 0$, its \emph{development} $\gamma:[0,T] \to M$ is defined as the solution to
$$\dot \gamma_t = \ptr_t \dot \omega_t, \qquad \gamma_0 =o,$$
where $\ptr_t: T_{o} M \to T_{\gamma_t}M$ denotes parallel transport along $\gamma_t$ with respect to $\nabla$. The curve $\omega_t$ is denoted the \emph{anti-development} of $\gamma_t$. Any curve $\gamma_t$ has an anti-development. Conversely, any curve $\omega_t$ has a development if $\nabla$ is \emph{complete}. For the remainder of the paper, we will assume that all connections are complete. For modifications when $\nabla$ is not complete, see \cite[Remark~2.2]{CGT21}.

We will also consider an affine analogue of the concept of development. Let $u_t$ be a time-dependent vector field. For a smooth curve $\omega_t$ starting at~$0$, we define \emph{the affine development} of $(\nabla, u_t)$ as the solution of
$$\dot \gamma_t =\ptr_t \dot \omega_t + u_t(\gamma_t), \qquad \gamma_0 = o.$$
Again, this exists for any smooth $\omega_t$ if $\nabla$ and $u_t$ are complete. We write $\gamma_t= \Dev_{u}(\omega)_t$ for the solution.

We will now produce a formula for the derivative of this affine development map, generalizing the result in \cite[Lemma~2.1]{CGT21} for the derivative of the development map. We will first introduce the following notation. Let $\tau_t$ be a time-dependent $\binom{i}{j}$-tensor and $\gamma_t$ be a curve in $M$ with $\gamma_0 = o$. We then define $\tau_{t,\ptr_t} \in x(T_o^*M)^{\otimes i} \otimes (T_oM)^{\otimes j} $ as the corresponding linear map obtained by parallel transporting all elements to $o =\gamma_0 \in M$. For example, if $T$ and $R$ are respectively the torsion and the curvature of $\nabla$, then
$$T_{\ptr_t}(v,w) = \ptr_t^{-1} T(\ptr_t v, \ptr_t w), \qquad R_{\ptr_t}(v,w) = \ptr_t^{-1} R(\ptr_t v, \ptr_t w) \ptr_t,$$
for any $v,w \in T_oM$, while  $u_{t,\ptr_t} = \ptr_t^{-1} u_t$ for a time-dependent vector field $u_t$.
Using this notation, we now have the following result for first-order variations of the development.
\begin{lemma} \label{lemma:AffineDev}
Let $k_t$ be a smooth map in $T_o M$ satisfying $k_0 = 0$. 
Then
$$\frac{d}{ds} \Dev_u(\omega+sk)_t |_{s=0}= \ptr_t v_t,$$
where $v_t$ is a curve in $T_oM$ with $v_0 = 0$ solving
\begin{align} \label{ktov}
k_t & = v_t  - \int_0^t \ptr_s^{-1} \nabla_{\ptr_s v_s} u_s ds -\int_0^t T_{\ptr_s}(  \dot \omega_s + u_{s,\ptr_s},v_s) ds \\ \nonumber
& \qquad - \int_0^t \int_0^s R_{\ptr_r}(\dot \omega_r + u_{r,\ptr_r}, v_r) \dot \omega_s \, dr ds .
\end{align}
\end{lemma}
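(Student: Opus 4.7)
The plan is to set up the one-parameter family of developed curves, differentiate the defining ODE in the parameter, and translate the resulting covariant variational equation back to the fixed tangent space $T_oM$ via parallel transport to recognise it as the derivative of the claimed identity for $k_t$.

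Concretely, I would put $\gamma^\ve_t := \Dev_u(\omega+\ve k)_t$, so that by definition
\[
  \partial_t \gamma^\ve_t = \ptr^\ve_t(\dot\omega_t + \ve \dot k_t) + u_t(\gamma^\ve_t), \qquad \gamma^\ve_0 = o,
\]
where $\ptr^\ve_t : T_oM \to T_{\gamma^\ve_t}M$ is $\nabla$-parallel transport along $\gamma^\ve$. Writing $V_t := \partial_\ve \gamma^\ve_t|_{\ve=0}$ and $\gamma_t := \gamma^0_t$, $\ptr_t := \ptr^0_t$, the goal is to show $V_t = \ptr_t v_t$ with $v_t$ satisfying the integral identity \eqref{ktov}. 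Applying $\nabla_\ve|_{\ve=0}$ to the defining ODE and commuting $\nabla_\ve$ with $\partial_t$ on the left using torsion yields
\[
  \nabla_t V_t + T(\dot\gamma_t, V_t) = \bigl(\nabla_\ve \ptr^\ve_t \dot\omega_t\bigr)\big|_{\ve=0} + \ptr_t \dot k_t + \nabla_{V_t} u_t.
\]
The only nontrivial ingredient is the first term on the right, i.e.\ the variation of parallel transport.

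For that I would show the following standard lemma (or derive it in place): for any fixed $w\in T_oM$, the vector field $Y^w_t := \nabla_\ve (\ptr^\ve_t w)|_{\ve=0}$ along $\gamma_t$ satisfies $\nabla_t Y^w_t = R(\dot\gamma_t, V_t)\ptr_t w$ with $Y^w_0 = 0$, because $\nabla_t \ptr^\ve_t w = 0$ identically in $\ve$ and the commutator $[\nabla_t,\nabla_\ve]$ acting on a vector field is $R(\partial_t\gamma^\ve,\partial_\ve\gamma^\ve)$. Integrating this ODE and pulling everything back to $T_oM$ by $\ptr_t^{-1}$ gives
\[
  \ptr_t^{-1}\bigl(\nabla_\ve \ptr^\ve_t \dot\omega_t\bigr)\big|_{\ve=0} = \int_0^t R_{\ptr_s}\!\bigl(\dot\omega_s + u_{s,\ptr_s},\, v_s\bigr)\dot\omega_t\, ds,
\]
after substituting $V_s = \ptr_s v_s$ and $\dot\gamma_s = \ptr_s(\dot\omega_s + u_{s,\ptr_s})$.

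Now I would write $V_t = \ptr_t v_t$ (possible because $V_0 = 0$), so that $\nabla_t V_t = \ptr_t \dot v_t$, and apply $\ptr_t^{-1}$ to the whole variational equation. The torsion term becomes $T_{\ptr_t}(\dot\omega_t + u_{t,\ptr_t}, v_t)$ and the drift term becomes $\ptr_t^{-1}\nabla_{\ptr_t v_t} u_t$, yielding
\[
  \dot v_t = \dot k_t + \ptr_t^{-1}\nabla_{\ptr_t v_t}u_t + T_{\ptr_t}(\dot\omega_t + u_{t,\ptr_t},v_t) + \int_0^t R_{\ptr_s}(\dot\omega_s + u_{s,\ptr_s},v_s)\dot\omega_t\, ds.
\]
Integrating from $0$ to $t$ (and using $v_0 = 0$, $k_0 = 0$) gives exactly \eqref{ktov}, with the iterated integral arising from integrating the curvature term; conversely, differentiating \eqref{ktov} shows uniqueness of $v_t$, so the constructed $v_t$ is indeed the one appearing in the conclusion.

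The main obstacle is bookkeeping the derivative of parallel transport correctly: this is where both curvature and the specific form of the iterated curvature integral in \eqref{ktov} originate, and one must be careful that $\dot\omega_t$ in the integrand is the outer-time vector rather than an integration variable. Everything else is a direct application of the $[\nabla_t,\nabla_\ve]$ identities and completeness of $\nabla$ and $u$ to ensure the variation $\gamma^\ve$ is defined on $[0,T]$ for small $\ve$.
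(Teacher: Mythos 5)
Your proposal is correct and takes essentially the same route as the paper's proof: differentiate the development ODE in the variation parameter, swap $\frac{D}{dt}$ and $\frac{D}{ds}$ via the torsion identity, and obtain the variation of parallel transport from the curvature commutator — your sub-lemma on $Y^w_t = \nabla_\ve(\ptr^\ve_t w)|_{\ve=0}$ is precisely the paper's computation of $\frac{D}{ds}e^s_{j,t}|_{s=0}$ for a parallel frame, after which integration yields \eqref{ktov}. One small bookkeeping remark: with the paper's convention $T(\partial_s\gamma,\partial_t\gamma)=\frac{D}{ds}\partial_t\gamma-\frac{D}{dt}\partial_s\gamma$, your intermediate display should read $\nabla_t V_t - T(\dot\gamma_t,V_t)=\cdots$ rather than $\nabla_t V_t + T(\dot\gamma_t,V_t)=\cdots$; this sign slip cancels against a second implicit flip when you pass to your final equation for $\dot v_t$, which agrees exactly with \eqref{ktov}.
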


\begin{proof}
Write $\gamma_{t}^s = \Dev_u(\omega + sk)_t$. We let $\frac{D}{dt}$ and $\frac{D}{ds}$ denote the covariant derivative along $\gamma_t^s$ in respectively the $t$ and $s$-directions relative to $\nabla$.
Construct a basis $e_{1,t}^s, \dots, e^s_{d,t}$ of vector fields along $\gamma_{t}^s$ uniquely determined by their initial conditions and $\frac{D}{dt} e_{j,t}^s=0$. In other words, we can choose an arbitrary basis of vector fields $e^s_{1,0}, \dots, e_{d,0}^s$ along $s \mapsto \gamma_0^s$ and parallel transport this basis in the $t$-direction. We simplify the notation by writing $e^0_{j,t} = e_{j,t}$. We use the standard formulas,
\begin{align*} T(\partial_s \gamma_t^s, \partial_t \gamma_t^s) &= \frac{D}{ds} \partial_t \gamma_t^s - \frac{D}{dt} \partial_s \gamma_t^s, \\
R(\partial_t \gamma_t^s, \partial_s \gamma_t^s) e_{j,t}^s & = \frac{D}{dt} \frac{D}{ds} e_{j,t}^s - \frac{D}{ds} \frac{D}{dt} e_{j,t}^s
= \frac{D}{dt} \frac{D}{ds} e_{j,t}^s.
\end{align*}
For the curves $T_oM$, if we write them in our chosen basis as $k_t = \sum_{j=1}^d k_t^j e_{j,0}$, $\omega_t = \sum_{j=1}^d \omega_t^j e_{j,0}$ and $\partial_s \gamma_t |_{s=0} = \ptr_t v_t$, then
\begin{align*}
& \dot v_t = \ptr_t^{-1} \frac{D}{dt} \partial_s \gamma_t^s |_{s=0} = \ptr_t^{-1} \left. \left( \frac{D}{ds} \partial_t \gamma_t^s - T(\partial_s \gamma_t^s, \partial_t \gamma_t^s)  \right) \right|_{s=0} \\
& = \ptr_t^{-1} \left( \sum_{i=1}^d \dot k^i_t e_{j,t} + \sum_{i=1}^n \dot \omega^j_t \frac{D}{ds} e_{j,t}^s|_{s=0} + ( \nabla_{\ptr_t v_t} u_t)(\gamma_t) - T(\ptr_t v_t, \ptr_t \omega_t + u_t(\gamma_t))  \right) \\
& = \dot k_t  + \sum_{i=1}^n \dot \omega^j_t \frac{D}{ds} e_{j,t}^s|_{s=0} + \ptr_{t}^{-1}(\nabla_{\ptr_t v_t} u_t)(\gamma_t) - T_{\ptr_t}( v_t, \omega_t + \ptr_t^{-1} u_t(\gamma_t)) .
\end{align*}
Furthermore,
\begin{align*}
\frac{d}{dt} \ptr_t^{-1} \frac{D}{ds} e_{j,t}^s|_{s=0} & = \ptr_t^{-1} \frac{D}{dt} \frac{D}{ds} e_{j,t}^s|_{s=0} = \ptr_t^{-1} R(\partial_t \gamma_t^s, \partial_s \gamma_t^s) e_{j,t}^s|_{s=0} \\
& =  R_{\ptr_t}(\dot \omega_t + \ptr_t^{-1} u_t(\gamma_t), v_t) e_j .
\end{align*}
Finally, since $s \to \gamma_0^s =o$ is constant, we have $D_s e_{j,t}^s |_{s=0,t=0} =0$ by definition, and hence
$$\frac{D}{ds} e_{j,t}^s |_{s=0} = \ptr_t \int_0^t  R_{\ptr_r}(\dot \omega_r + \ptr_r^{-1} u_r(\gamma_r), v_r) e_j dr.$$
Using that also $v_0 = 0$, the result follows.
\end{proof}

\subsection{Most probable paths in Euclidean spaces and Riemannian manifolds} \label{sec:TrueMPPs}
Before considering the notion of development most probable paths that we investigate in this paper, we first recap the standard notions of most probable curves in $\mathbb{R}^d$ and on Riemannian manifolds. Let $B_t$ be a Brownian motion in $\mathbb{R}^d$.
Let $C^1([0,T],\mathbb{R}^d;0)$ be the space of $C^1$ curves $t \mapsto z_t$ such that $z_0 = 0$. We then say that $B_t$ has Onsager-Machlup function $\OM:C^1([0,T],\mathbb{R}^d;0) \to \mathbb{R}$, if 
\begin{equation} \label{OMlimit}\lim_{\ve \downarrow 0} \frac{P(\|B_t -z_{1,t}\|< \ve\ \forall\,t \in [0,T])}{P(\|B_t -z_{2,t}\|< \ve\ \forall\,t \in [0,T])} = \exp\left( -\OM(z_{1}) + \OM(z_2) \right)\end{equation}
for all $z_1,z_2\in C^1([0,T],\mathbb{R}^d;0)$.
In this case where $B_t$ is a Brownian motion, the Onsager-Machlup function \begin{equation} \label{OMRd} \OM(z) = \OM_{\mathbb{R}^d}(z) = \frac{1}{2} \int_0^T \langle \dot z_t, \dot z_t \rangle \, dt.\end{equation}
Hence, we can consider a path $t \mapsto z_t$ \emph{as most probable} if it minimizes $\OM$ among all the paths with the same fixed end points, which for $\mathbb{R}^d$ are straight lines.

Let now $(M,g)$ be a Riemannian manifold with a vector field $u$, and fix an initial point $o \in M$. We emphasize that $u$ is a usual vector field in this example without any time dependence. Let $B_t$ be a Brownian motion in the inner product space $T_oM$. We consider the solution of the manifold-valued SDE,
$$dx_t = \ptr_t \circ dB_t + u(x_t) dt, \qquad x_0 = o.$$
Let $\gamma_t$ be a curve in $M$ starting at $o$, and consider the probability of $d_g(x_t,\gamma_t)< \ve$ for every $t \in [0,T]$ where $d_g$ is the Riemannian distance. If we consider the asymptotic limit as $\ve \to 0$ as in \eqref{OMlimit}, the Onsager-Machlup function is
\begin{equation} \label{OMRie} \OM_{M,g,u}(\gamma_t) = \int_0^T \left( \frac{1}{2} \| \dot \gamma_t -u(\gamma_t)\|_g^2 + \DIV(u)(\gamma_t) - \frac{1}{12} S(\gamma_t) \right) dt.\end{equation}
with $\DIV$ being the divergence and $S$ the scalar curvature. See \cite{fujitaOnsagerMachlupFunctionDiffusion1982} for details. Most probable paths are then critical points of \eqref{OMRie} with fixed endpoints. For explicit formulas for critical points of $\OM_{M,g,u}$, see Remark~\ref{re:distortion}.

\subsection{Diffusion processes with drift and non-zero covariance} \label{sec:dMPP}
We will now consider a more general class of processes than those in Section~\ref{sec:TrueMPPs}. Let $M$ be a given $d$-dimensional connected manifold. \emph{A sub-Riemannian structure} $(E,g)$ is a subbundle $E \subseteq TM$ along with a smoothly varying inner product $g = \langle \cdot , \cdot \rangle_g$ defined only on $E$. We include here the possibility that $E = TM$, which makes $(E,g)$ a Riemannian structure. An affine connection $\nabla$ is said to be \emph{compatible} with $(E,g)$ if it takes orthogonal bases of $E$ to orthogonal bases under parallel transport.

Let $o \in M$ be a given base point on the sub-Riemannian manifold $(M, E, g)$. For a given $T >0$, define \emph{the Cameron Martin space} $\mathbb{H}^T(E_o)$ as the space of all absolutely continuous curves in $E_o$ starting at $0$ with an $L^2$-derivative with respect to our chosen inner product. This is a Hilbert space with product
$$\langle \omega, k\rangle_{\mathbb{H}} = \int_0^T \langle \dot \omega_t, \dot k_t \rangle_g \, dt.$$
If $\nabla$ is a given connection and $u_t$ a given time-dependent vector field, we notice that the concept of affine development $\Dev_u(\omega)$ is well defined for curves $\omega \in \mathbb{H}^T(E_o)$, and we can still apply Lemma~\ref{lemma:AffineDev} with its proof. See Appendix~\ref{sec:AbsCont} for details.

We consider parallel transport and development with respect to a connection $\nabla$ compatible with the sub-Riemannian structure $(E,g)$. Let $\Sigma_t: E_o \to E_o$ be a linear map with a smooth dependence on $t \in \mathbb{R}_{\geq 0}$. We also assume that $\Sigma_t$ is invertible, positive definite and symmetric with respect $\langle \cdot, \cdot \rangle_{g(o)}$. Finally, we assume that it the unique positive square root $\Sigma_t^{1/2}$ of $\Sigma_t$ has smooth dependence on $t$ as well. For the remainder of the paper, we will always have these assumptions for $\Sigma_t$.

We consider the solution of $x_0 = o$ and
\begin{equation} \label{xtFirsteq} dx_t = \ptr_t \Sigma_t^{1/2} \circ dB_t + u_t(x_t) \,dt,\end{equation}
where $B_t$ is a standard Brownian motion in the inner product space $(E_{o}, \langle \cdot, \cdot \rangle_{g(o)})$ and $\ptr_t: T_o M \to T_{x_t}M$ denotes parallel transport along $x_t$ with respect to $\nabla$.
 For more details finding stochastic processes from parallel transport, we refer to, e.g., \cite{elworthy1992stochastic,Hsu02}. Notice if $\rank E = J$, then there is always some orthogonal basis $\sigma_{1,t}(o),\dots,\sigma_{J,t}(o)$ smoothly on~$t$ such that $\Sigma_t(v) = \sum_{j=1}^J \langle \sigma_{j,t}(o) , v \rangle \sigma_{j,t}(o)$, $v \in E_o$, which allows us to alternatively write \eqref{xtFirsteq} as
$$dx_t = \ptr_t \sum_{j=1}^J \sigma_{j,t}(o) \circ d\hat B^j_t + u_t(x_t) \, dt,$$
where $(\hat B_t^1, \dots, \hat B_t^J)$ is a standard Brownian motion in $\mathbb{R}^J$.

To the best of our knowledge, there are no general expression for the Onsager-Machlup function for $x_t$ even in the case when $u_t =0$ and when $\Sigma_t = \Sigma$ is constant in time and of full rank, but not a constant times the identity. We emphasize that the previous mentioned case is non-Markov, see Remark~\ref{re:NonMarkov} for details. As such, with slight abuse of terminology, we introduce the following replacement. For any $y \in M$, we define $\mathbb{H}(y) = \mathbb{H}^T(E_o; \nabla, \Sigma_t, u_t,y)$ as the subset of all curves $\omega_t \in \mathbb{H}^T(E_o)$ such that $\gamma_t$ is the solution of
\begin{equation} \label{gammatFirsteq} \dot \gamma_t = \ptr_t \Sigma_t^{1/2} \dot \omega_t + u_t(\gamma_t),\qquad \gamma_0 = o,\end{equation}
and
\begin{equation} \label{DefPi}
\Pi(\omega) :=\gamma_T = y.
\end{equation}
This leads to the following definition.
\begin{definition}[Development most probable paths]
A curve $\gamma_t\in M$ satisfying \eqref{gammatFirsteq} and \eqref{DefPi} for some given $\omega$ is \emph{a development most probable path} (dMPP) from $o$ to $y$ if $\omega$ is a most probable path in $\mathbb{H}(y)$ with respect to $B_t$.
\end{definition}
In other words, $\gamma_t$ is a development most probable path if its anti-development
\begin{equation} \label{Anti-dev} \omega_t = \int_0^t \Sigma_s^{-1/2} \ptr_s^{-1}(\dot \gamma_s - u_s(\gamma_s) ) ds. \end{equation}
is in $\mathbb{H}^T(E)$ and is a critical point of
\begin{equation} \label{OMEo}
\OM_{E_o}(\omega)  = \frac{1}{2} \| \omega \|_{\mathbb{H}}= \frac{1}{2} \int_0^T \langle \dot \omega_t, \dot \omega_t \rangle_{g(o)} dt
\end{equation}
when restricted to $\mathbb{H}(y)$. We emphasize that contrary to MPPs for Euclidean Brownian motions, dMPPs are not developments of straight lines because we are fixing the end point in the manifold rather than in the tangent space. The dMPPs will sense the curvature of the manifold through development map and the endpoint condition.
See also Remarks~\ref{re:distortion} for differences between MPPs and dMPPs and the effect of $\Sigma_t^{1/2}$ in the definition of dMPPs.

We will give equations for developed most probable paths in Theorem~\ref{th:MPPu}. We will first, however, present the following useful lemma. Introduce a tensor~$K$ seen as a map $K: \wedge^2 E \otimes TM \to T^*M$ and defined by
\begin{equation} \label{defK} K(v_1, v_2, w_1)(w_2) = K(v_1\wedge v_2, w_1)(w_2) = \langle R(w_1, w_2) v_1 , v_2 \rangle_g.\end{equation}
The operator $K$ above can be considered as a dual to the curvature operator~$R$ of~$\nabla$, where the curvature is only applied to $E$. We have used that the connection $\nabla$ is compatible with the sub-Riemannian structure $(E,g)$ and hence the right-hand side of \eqref{defK} is skew-symmetric in~$v_1$ and~$v_2$. We will also need the map $\sharp^g: T^*M \to E$ defined such that 
$$\alpha(v) = \langle \sharp^g \alpha, v \rangle_g, \qquad \text{for any $\alpha \in T^*_xM$, $v \in E_x$, $x \in M$.} $$
Relative to this notation, we have the following result.
\begin{lemma} \label{lemma:LambdaToAlpha}
Write $\frac{D}{dt}$ for the covariant derivative along $\gamma_t$ with respect to $\nabla$. Let $\omega, k \in \mathbb{H}^T(E_o)$ be arbitrary and define $\gamma_t^s = \Dev_u(\omega+ sk)$ and $\frac{d}{ds} \gamma_t^s |_{s=0} = \ptr_t v_t$. Then for any absolutely continuous curve $\lambda_t$ in $T^*M$ along $\gamma_t$, we have
\begin{align*}
\int_0^T \lambda_t(\ptr_t \dot k_t) \, dt = \lambda_T(\ptr_T v_T) - \int_0^T \alpha_t(\ptr_t v_t) \,dt
\end{align*}
where
$$\alpha_t = \frac{D}{dt} \lambda_t + \lambda_t(\nabla u_t)+ \lambda_t(T(\dot \gamma_t, \cdot)) - K(\chi_t, \dot \gamma_t),$$
and $\chi_t$ is the two-vector field along $\gamma_t$ with values in $\wedge^2 E$ that is the solution of
$$\frac{D}{dt} \chi_t = (\dot \gamma_t - u_t(\gamma_t)) \wedge \sharp^g \lambda_t, \qquad \chi_T = 0.$$
\end{lemma}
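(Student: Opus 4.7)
The plan is to differentiate~\eqref{ktov} in $t$ to obtain $\dot k_t$, substitute into $\int_0^T \lambda_t(\ptr_t \dot k_t)\, dt$, and handle the resulting four summands by ordinary integration by parts together with Fubini on the curvature double integral. Differentiating~\eqref{ktov} gives
\[
\dot k_t = \dot v_t - \ptr_t^{-1}\nabla_{\ptr_t v_t} u_t - T_{\ptr_t}(\dot\omega_t + u_{t,\ptr_t}, v_t) - \int_0^t R_{\ptr_r}(\dot\omega_r + u_{r,\ptr_r}, v_r)\dot\omega_t \, dr,
\]
so that applying $\ptr_t$, pairing with $\lambda_t$, and using $\ptr_t \dot\omega_t + u_t(\gamma_t) = \dot\gamma_t$ produces a clean split into four integrals, one for each summand on the right.

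For the leading $\dot v_t$ summand I will introduce the covector $\mu_t \in T_o^*M$ defined by $\mu_t(w) = \lambda_t(\ptr_t w)$; compatibility of parallel transport gives $\dot \mu_t(w) = (D\lambda_t/dt)(\ptr_t w)$, and an ordinary integration by parts in $t$ using $v_0 = 0$ produces the boundary term $\lambda_T(\ptr_T v_T)$ together with the contribution $-\int_0^T (D\lambda_t/dt)(\ptr_t v_t)\, dt$. The $\nabla u$ and torsion summands directly yield $-\int_0^T \lambda_t(\nabla u_t)(\ptr_t v_t)\, dt$ and $-\int_0^T \lambda_t(T(\dot\gamma_t,\cdot))(\ptr_t v_t)\, dt$, accounting for the next two pieces of $\alpha_t$.

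The main work is the curvature double integral, which I will attack by Fubini, swapping the order so that the inner integration is in $t$ from $r$ to $T$. The integrand pairs the curvature $R$ at $\gamma_r$ with the covector $\lambda_t$ and the vector $\ptr_r \dot\omega_t$ coming from $\gamma_t$, and so must be re-expressed at the single point $\gamma_r$. Here assumption~\eqref{B} enters twice: parallel invariance of $E$ keeps both $\ptr_r\dot\omega_t = \ptr_r\ptr_t^{-1}(\dot\gamma_t - u_t(\gamma_t))$ and $\ptr_r\ptr_t^{-1}\sharp^g\lambda_t$ inside $E_{\gamma_r}$, and preservation of $g$ lets me invoke the skew symmetry $g(R(X,Y)Z,W) = -g(R(X,Y)W,Z)$ for $Z,W\in E$. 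Combined with the definition~\eqref{defK} of $K$, the integrand then rewrites as
\[
K\bigl(\ptr_r\ptr_t^{-1}((\dot\gamma_t - u_t(\gamma_t))\wedge \sharp^g\lambda_t),\ \dot\gamma_r\bigr)(\ptr_r v_r).
\]
The inner $t$-integral should match the backward parallel-transport representation $\chi_r = -\int_r^T \ptr_r\ptr_s^{-1}((\dot\gamma_s-u_s)\wedge\sharp^g\lambda_s)\,ds$ of the solution to $\tfrac{D}{dt}\chi = (\dot\gamma-u)\wedge\sharp^g\lambda$ with $\chi_T = 0$, and after relabelling variables the whole curvature contribution becomes $\int_0^T K(\chi_t,\dot\gamma_t)(\ptr_t v_t)\,dt$, providing the final $-K(\chi_t,\dot\gamma_t)$ piece of $\alpha_t$.

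The hardest step will be this last one: tracking the three-point parallel transports between $o$, $\gamma_r$, and $\gamma_t$, invoking the correct skew symmetry of $R$ on $E$ via~\eqref{B}, and reorganising the Fubini integral into the backward ODE for $\chi_t$. Everything else is routine, and summing the four contributions delivers the claimed identity.
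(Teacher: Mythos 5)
Your proposal is correct and takes essentially the same route as the paper's own proof: differentiate \eqref{ktov}, pair with $\lambda_t$, integrate the $\dot v_t$ term by parts using $v_0=0$ to produce $\lambda_T(\ptr_T v_T)$ and the $\frac{D}{dt}\lambda_t$ contribution, and convert the curvature double integral by Fubini and the backward representation $\chi_t = -\int_t^T \ptr_t\ptr_s^{-1}\bigl((\dot\gamma_s - u_s(\gamma_s))\wedge\sharp^g\lambda_s\bigr)\,ds$ into $\int_0^T K(\chi_t,\dot\gamma_t)(\ptr_t v_t)\,dt$. Your parallel-transport and sign bookkeeping in the curvature step checks out against the definition \eqref{defK} of $K$ (with assumption \eqref{B} ensuring the curvature terms stay in $E$ and $g$ is preserved), so the argument is sound.
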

\begin{proof}
We use equation \eqref{ktov}. Observe first that
\begin{align*}
    & \int_0^T \int_0^t \lambda_t \ptr_t R_{\ptr_r}(\dot \omega_r + u_{r,\ptr_t}(\gamma_r), v_r ) \dot \omega_t \, dr dt \\
    & = \int_0^T \int_t^T \lambda_r \ptr_r \ptr_t^{-1} R(\dot \gamma_t , \ptr_t v_t ) \ptr_t \ptr_r^{-1} (\dot \gamma_r - u_r(\gamma_r)) \, dr dt \\
    & =  - \int_0^T K(\chi_t, \dot \gamma_t) ( \ptr_t v_t )  dt,
\end{align*}
meaning that
\begin{align*} 
\int_0^T \lambda_t (\ptr_t \dot k_t) \, dt & = \int_0^T \lambda_t(\ptr_t \dot v_t) \, dt  - \int_0^T \lambda_t(\nabla_{\ptr_t v_t} u_t) dt \\ \nonumber
& \qquad -\int_0^T \lambda_t T(  \dot \gamma_t,\ptr_t v_t) dt  + \int_0^T K(\chi_t, \dot \gamma_t) ( \ptr_t v_t )  dt \\
& = \lambda_T(\ptr_T v_T) - \int_0^t \alpha_t(\ptr_t v_t),
\end{align*}
by using integration by parts.
\end{proof}

\begin{remark}
For a general curve $\gamma_t$ in $M$, we have no guarantee that the solution of \eqref{Anti-dev} will take values in $E_o$. As an alternative statement, we could instead consider $\mathbb{H}(y)$ be all curves in $H(T_oM)$ satisfying $\Pi(\omega) = y$, but letting $\langle v , v \rangle_{g(o)} = \infty$ whenever $v \in T_oM \setminus E_o$.
\end{remark}

\subsection{Singular curves}
If we consider the map $\Pi: \mathbb{H}^T(E_o) \to M$, $\omega \mapsto \gamma_T$ as defined in \eqref{DefPi}, this is a smooth map of Hilbert manifolds. As a consequence, if $\Pi(\omega) = y$ and $\omega$ is a regular point of $\Pi$, then $\mathbb{H}(y) = \Pi^{-1}(y)$ has the structure of a Hilbert manifold close to $\omega$ by the implicit function theorem. We say that $\omega$ is \emph{regular} if it is a regular point of $\Pi$ and we will otherwise call it \emph{singular}. We say that a curve $\gamma$ is \emph{singular} if it is the solution of \eqref{gammatFirsteq} for a singular $\omega$.

We have the following criterion for singular curves.
\begin{lemma} \label{lemma:Singular}
The curve $\gamma$ is singular if and only if there is if a non-vanishing one-form $\lambda_t$ along $\gamma_t$ satisfying
\begin{equation} \label{singular} \left\{
\begin{aligned}
\dot \gamma_t - u_t(\gamma_t) \in E, \text{for almost every t}, \\
\frac{D}{dt} \lambda_t + \lambda_t \nabla u_t + \lambda_t T(\dot \gamma_t, \cdot ) =0, \\
\lambda_t|_E =0.
\end{aligned} \right.
\end{equation}
\end{lemma}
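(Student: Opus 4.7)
The plan is to turn singularity of $\omega$ --- nonsurjectivity of $d\Pi_\omega$ --- into the ODE system \eqref{singular} via Lemma~\ref{lemma:LambdaToAlpha}. Two preliminary observations will do most of the work. First, for any $\omega \in \mathbb{H}^T(\mathfrak{e})$, equation \eqref{gammatFirsteq} gives $\dot\gamma_t - u_t(\gamma_t) = \ptr_t\Sigma_t^{1/2}\dot\omega_t \in E_{\gamma_t}$, so the first line of \eqref{singular} is automatic. Secondly, whenever $\lambda_t|_E = 0$ along $\gamma_t$, the definition of $\sharp^g$ forces $\sharp^g\lambda_t = 0$; hence the $\chi_t$ produced by Lemma~\ref{lemma:LambdaToAlpha}, solution of $\frac{D}{dt}\chi_t = (\dot\gamma_t - u_t(\gamma_t))\wedge\sharp^g\lambda_t$ with $\chi_T = 0$, vanishes identically, and the curvature term $K(\chi_t, \dot\gamma_t)$ drops out of $\alpha_t$.

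By Lemma~\ref{lemma:AffineDev}, $d\Pi_\omega(k) = \ptr_T v_T$, so singularity of $\omega$ is equivalent to the existence of a nonzero $\lambda_T^0 \in T_y^*M$ with $\lambda_T^0(\ptr_T v_T) = 0$ for every $k \in \mathbb{H}^T(\mathfrak{e})$. For the ``if'' direction, I would take $\lambda_T^0 := \lambda_T$, which is nonzero since $\lambda$ satisfies the linear ODE in \eqref{singular}. The second observation gives $\chi_t \equiv 0$, hence $\alpha_t \equiv 0$, and Lemma~\ref{lemma:LambdaToAlpha} reads
\[
\lambda_T^0(\ptr_T v_T) = \int_0^T \lambda_t(\ptr_t \dot k_t)\,dt,
\]
whose right-hand side vanishes because $\ptr_t\dot k_t \in E_{\gamma_t}$ and $\lambda_t|_E = 0$; so $\omega$ is singular.

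For the ``only if'' direction, given a nonzero $\lambda_T^0$ annihilating $d\Pi_\omega$, I would define $(\lambda_t,\chi_t)$ by solving the coupled backward linear Cauchy problem $\alpha_t \equiv 0$, $\frac{D}{dt}\chi_t = (\dot\gamma_t-u_t(\gamma_t))\wedge\sharp^g\lambda_t$ with terminal data $(\lambda_T^0, 0)$. Lemma~\ref{lemma:LambdaToAlpha} then forces $\int_0^T \lambda_t(\ptr_t \dot k_t)\,dt = 0$ for every $k \in \mathbb{H}^T(\mathfrak{e})$; letting $\dot k$ range over $L^2([0,T], \mathfrak{e})$ gives $\lambda_t|_E = 0$ a.e., and continuity upgrades this to every $t$. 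The second observation now applies, so $\chi_t \equiv 0$, and $\alpha_t = 0$ collapses to the decoupled ODE in \eqref{singular}. Non-vanishing of $\lambda_t$ follows from $\lambda_T = \lambda_T^0 \neq 0$ together with linearity of its defining ODE.

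The only real obstacle is the apparent circularity: the simplified ODE in \eqref{singular} emerges only after one knows $\lambda_t|_E = 0$, yet that fact itself is derived from the $\lambda_t$ defined by the fully coupled system. The resolution is to observe that the coupled backward system on $T^*M \oplus \wedge^2 E$ along $\gamma_t$ is already a well-posed linear ODE, so it produces $\lambda_t$ unambiguously; the triviality of the $\chi$-coupling is then learned a posteriori.
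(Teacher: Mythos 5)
Your proposal is correct and follows essentially the same route as the paper's proof: solve the coupled backward system \eqref{Pre-MPP} with terminal data $(\lambda_T,0)$, apply Lemma~\ref{lemma:LambdaToAlpha} with $\alpha_t\equiv 0$, deduce $\lambda_t|_{E}=0$ from the arbitrariness of $\dot k$ in $L^2([0,T],\mathfrak{e})$, and then conclude $\chi_t\equiv 0$ so the system decouples to \eqref{singular} --- your explicit resolution of the ``apparent circularity'' is exactly what the paper does implicitly. The one step you gloss, identifying singular points of $\Pi$ with those of $\tilde\Pi(\omega)=\Dev_u(\omega)_T$, is handled in the paper via the invertibility of $\omega\mapsto\int_0^{\cdot}\Sigma_s^{1/2}\dot\omega_s\,ds$ on $\mathbb{H}^T(\mathfrak{e})$ and is harmless here since $\Sigma_t^{1/2}$ is invertible on $\mathfrak{e}$ by assumption \eqref{A}.
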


\begin{proof}
Recall that $\Pi$ sends a curve $\omega$ in $E_o$ to the final point of the solution of $\dot \gamma_t = \ptr_t \Sigma_t^{1/2} \dot \omega_t + u_t(\gamma_t)$, $\gamma_0 = o$. We can write this map as a composition of maps $\Pi = \tilde \Pi \circ I_{\Sigma}$, where
$$I_{\Sigma}(\omega) = \tilde \omega, \qquad \tilde \omega_t = \int_0^t \Sigma_s^{1/2} \dot{\omega}_s ds, \qquad \tilde \Pi(\tilde \omega) = \Dev_u(\tilde \omega).$$
Since the map $I_\Sigma$ is an invertible map of $\mathbb{H}^{T}(E_o)$, we have that $\omega$ is a singular point of $\Pi$ if and only if $\tilde \omega = I_\Sigma(\omega)$ is a singular point of $\tilde \Pi$. For the remainder of the proof, we will therefore describe the curves $\gamma = \Dev_u(\omega)$ such that $\omega \in \mathbb{H}^T(E_o)$ is a singular value of $\tilde \Pi$.

Let $\tilde \Pi_{*, \omega}$ denote its push-forward at $\omega$. For $\omega, k \in \mathbb{H}^T(E_o)$, let $v_t$ be defined as in \eqref{ktov}, which gives us the relation $\tilde \Pi_{*,\omega} k = \ptr_T v_T$ as in Lemma~\ref{lemma:AffineDev}. Hence, $\gamma$ is singular if for some non-zero $\lambda_T \in T_{\Pi(\omega)}^* M$, we have that
$$\lambda_T \ptr_T v_T = 0,$$
for all solutions $v_t$ of \eqref{ktov}.

Consider now $\lambda_t$ as the unique one-form along $\gamma_t$ with final value $\lambda_T$ that solves the equations
\begin{equation}  \label{Pre-MPP} \left\{ \begin{aligned}
  \frac{D}{dt} \lambda_t & =  - \lambda_t(\nabla u_t)- \lambda_t(T(\dot \gamma_t, \cdot)) + K(\chi_t, \gamma_t),  \\
  \frac{D}{dt} \chi_t & = (\gamma_t - u_t(\gamma_t)) \wedge \sharp^g \lambda_t, \qquad \chi_T = 0.
\end{aligned} \right.
\end{equation}
It then follows that
$$\lambda_T(\ptr_T v_T) = \int_0^T \lambda_t(\ptr_t \dot k_t)\, dt.$$
Since $\dot k_t$ can be an arbitrary $L^2$-function in $E_o$, we must hence have that $\lambda_t$ vanishes on $E_{\gamma_t} =\ptr_t E_o$. Finally, we use that $\sharp^g \lambda_t = 0$, giving us that $\chi_t =0$ and we have the result.
\end{proof}

\begin{remark} \label{re:NoSingular}
We mention in particular that if $E_o = T_oM$, $TM = E$, then a one-form cannot both be non-vanishing and vanish on $E$ and hence there are no singular curves.
\end{remark}

\section{Equations for developed most probable paths}
\label{sec:eqs}

We are now ready to present our main result. Let $x_t$ be the solution of \eqref{xtFirsteq}, i.e.,
\begin{equation} dx_t = \ptr_t \Sigma_t^{1/2} \circ dB_t + u(x_t) \,dt,\end{equation}
for a given $\Sigma_t: E_o \to E_o$ positive-definite symmetric map, vector field $u_t$ and parallel transport that preserve a sub-Riemannian structure $(E,g)$. In what follows, it will be practical to introduce the following abuse of notation, where we allow $\Sigma_t$ to act on elements in $E_{\gamma_t}$ by $\Sigma_t v = \ptr_t \Sigma_t \ptr_{t}^{-1}v$. With this terminology, we have the following equation for developed most probable paths.
\begin{theorem} \label{th:MPPu}
If $\gamma:[0,T] \to M$ is a developed most probable path of $x_t$ and not singular, then it is the solution of
\begin{equation}  \label{MPPu} \left\{ \begin{aligned}
 \sharp^g \lambda_t &= \Sigma_t^{-1} (\dot \gamma_t -u_t(\gamma_t)) \\
  \frac{D}{dt} \lambda_t & =  - \lambda_t(\nabla u_t)- \lambda_t(T(\dot \gamma_t, \cdot)) + K(\chi_t, \dot \gamma_t),  \\
  \frac{D}{dt} \chi_t & =  \frac{1}{2} \Sigma_t \sharp^g \lambda_t \wedge \sharp^g \lambda_t, \qquad \chi_T = 0.
\end{aligned} \right.
\end{equation}\end{theorem}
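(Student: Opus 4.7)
The plan is to treat Theorem~\ref{th:MPPu} as a constrained optimization problem on the Cameron--Martin space. Since $B_t$ is a standard Brownian motion in the Hilbert space $(\mathfrak{e},\langle\cdot,\cdot\rangle)$, the natural Onsager--Machlup functional along absolutely continuous paths is $F(\omega)=\tfrac{1}{2}\int_0^T\langle\dot\omega_t,\dot\omega_t\rangle\,dt$, and a most probable $\gamma$ from $o$ to $y$ should correspond to $\omega\in\mathbb{H}(y)=\Pi^{-1}(y)$ minimizing $F$. Because $\omega$ is assumed non-singular, $\Pi_{*\omega}$ is surjective, $\mathbb{H}(y)$ is a Hilbert submanifold near $\omega$ (as remarked in the text), and the Lagrange multiplier principle supplies some $\lambda_T\in T_y^*M$ with
$$dF_\omega(k)\;=\;\lambda_T(\Pi_{*\omega}k)\qquad\text{for every }k\in\mathbb{H}^T(\mathfrak{e}).$$

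First I would compute both sides. The left side is simply $\int_0^T\langle\dot\omega_t,\dot k_t\rangle\,dt$. For the right side I observe that $\gamma=\Dev_u(\tilde\omega)$ with $\dot{\tilde\omega}_t=\Sigma_t^{1/2}\dot\omega_t$, and that a variation $k$ of $\omega$ corresponds to the variation $\tilde k_t=\int_0^t\Sigma_s^{1/2}\dot k_s\,ds$ of $\tilde\omega$. Applying Lemma~\ref{lemma:AffineDev} to $(\tilde\omega,\tilde k)$ gives $\Pi_{*\omega}k=\ptr_T v_T$ with $v_t$ solving \eqref{ktov}.

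Next I would let $\lambda_t$ be the unique solution along $\gamma_t$ of the adjoint system
$$\tfrac{D}{dt}\lambda_t=-\lambda_t(\nabla u_t)-\lambda_t(T(\dot\gamma_t,\cdot))+K(\chi_t,\gamma_t),\qquad \tfrac{D}{dt}\chi_t=(\dot\gamma_t-u_t(\gamma_t))\wedge\sharp^g\lambda_t,\qquad\chi_T=0,$$
with terminal value the $\lambda_T$ above. By construction this makes the one-form $\alpha_t$ of Lemma~\ref{lemma:LambdaToAlpha} vanish, and that lemma reduces the Lagrange identity to
$$\int_0^T\langle\dot\omega_t,\dot k_t\rangle\,dt\;=\;\int_0^T\lambda_t\bigl(\ptr_t\Sigma_t^{1/2}\dot k_t\bigr)\,dt\qquad\text{for every }k.$$
Since $\ptr_t$ is a $g$-isometry and $\Sigma_t^{1/2}$ is self-adjoint on $\mathfrak{e}$, the right side equals $\int_0^T\langle\Sigma_t^{1/2}\ptr_t^{-1}\sharp^g\lambda_t,\dot k_t\rangle\,dt$. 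Arbitrariness of $\dot k_t\in L^2(\mathfrak{e})$ forces $\dot\omega_t=\Sigma_t^{1/2}\ptr_t^{-1}\sharp^g\lambda_t$ a.e., and plugging this into \eqref{gammatFirsteq} gives $\dot\gamma_t-u_t(\gamma_t)=\Sigma_t\sharp^g\lambda_t$, which is the first equation of~\eqref{MPPu}. The second equation is the adjoint equation itself. Inserting the identity $\dot\gamma_t-u_t(\gamma_t)=\Sigma_t\sharp^g\lambda_t$ into the $\chi_t$-equation of Lemma~\ref{lemma:LambdaToAlpha} turns it into the closed-form third equation of~\eqref{MPPu}, modulo the antisymmetrization convention for $\wedge$ that accounts for the factor $\tfrac{1}{2}$.

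The main conceptual obstacle is justifying that the conditional Onsager--Machlup functional for $B_t$ on the constraint set $\Pi(\omega)=y$ is indeed $\tfrac{1}{2}\int\|\dot\omega\|^2\,dt$ on Cameron--Martin paths; once this is granted (standard for the Wiener measure and already invoked in the paper's setup), the rest is a Lagrange multiplier calculation combined with the duality identities of Lemmas~\ref{lemma:AffineDev} and~\ref{lemma:LambdaToAlpha}. The technical subtleties are ensuring non-singularity really permits application of the Lagrange principle on the Hilbert submanifold $\mathbb{H}(y)$, and tracking the $\Sigma_t^{1/2}$ insertions consistently when converting between variations of $\omega$ and of the affine input $\tilde\omega$.
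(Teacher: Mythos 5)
Your proposal is correct and is essentially the paper's own proof: the paper realizes your abstract Lagrange-multiplier step concretely by showing that the solutions $\lambda_t$ of the adjoint system \eqref{Pre-MPP} generate a $d$-dimensional subspace $\check\Lambda\subseteq\mathbb{H}^T(\mathfrak{e})$ which, by Lemma~\ref{lemma:LambdaToAlpha}, is orthogonal to the tangent space of the fiber $\Pi^{-1}(y)$ (non-singularity guaranteeing this space has full dimension $d$, the dual of your surjectivity of $\Pi_{*\omega}$), so that criticality of $\OM$ forces $\dot\omega_t=\Sigma_t^{1/2}\ptr_t^{-1}\sharp^g\lambda_t$ exactly as in your derivation, with the same two lemmas and the same $\Sigma_t^{1/2}$ bookkeeping. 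Your remark that the factor $\tfrac12$ in the $\chi_t$-equation is a matter of wedge-normalization convention is also consistent with the paper, which itself passes between the two normalizations (compare \eqref{Pre-MPP} and \eqref{MPPu} with the computation in the proof of Proposition~\ref{prop:Lifted1}).
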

It follows that a dMPP has to satisfy either Lemma~\ref{lemma:Singular} or Theorem~\ref{th:MPPu} or both. Such a condition parallel the case for general length minimizers in sub-Riemannian manifolds, since such curves will have to be singular or solutions to a given Hamiltonian equation or both, see, e.g., \cite{montgomery2002tour} for details. Also see Section~\ref{sec:RSR} where these parallels are explored in more detail.
\begin{proof}
Let $\omega,\tilde \omega$ be curves in $E_o$ such that
$$\textstyle \dot\gamma_t = \ptr_t  \dot {\tilde \omega}_{t}  + u_t(\gamma_t) :=\ptr_t \Sigma_t^{1/2} \dot \omega_t  + u_t(\gamma_t), \qquad \tilde \omega_t  = I_\Sigma(\omega)_t = \int_0^t \Sigma_r^{1/2} \dot \omega_r \, dr,$$
and write $y = \gamma_T = \Pi(\omega) = \tilde \Pi(\tilde \omega)$. Define vector spaces
\begin{align*}
    V & = \left\{ \frac{d}{ds} \omega^s|_{s=0} \, : \text{$\omega^s$ is a variation of $\omega$, $\Pi(\omega^s) =y$} \right\}, \\
    \tilde V & = \left\{ \frac{d}{ds} \tilde \omega^s|_{s=0} \, : \text{$\tilde \omega^s$ is a variation of $\tilde \omega$, $\tilde \Pi(\tilde \omega^s) =y$} \right\},
\end{align*}
where $\tilde V = I_\Sigma(V)$. Remark that we assumed that $\omega$ and $\tilde \omega$ are regular points of $\Pi$ and $\tilde \Pi$, respectively, both $\Pi^{-1}(y)$ and $\tilde \Pi^{-1}(y)$ are submanifolds of $\mathbb{H}^T(E_o)$ close to the mentioned points of codimension $d = \dim M$ by the implicit function theorem. Along the curve, also introduce the following vector space of forms
$$\Lambda = \{\text{$\lambda_t$ is a solution of \eqref{Pre-MPP}}\},$$
which is a $d$-dimensional space by definition, as solutions are uniquely determined by the final condition $\lambda_T \in T_{\Pi(\omega)}M = T_{\tilde \Pi(\tilde \omega)} M$. Since $\gamma_t$ is not singular, $\lambda_t|_{E}$ is non-vanishing for every $t$ for any non-zero $\lambda_{\cdot} \in \Lambda$ by Lemma~\ref{lemma:Singular}. As a consequence
$$\textstyle \check{\Lambda} = \{ \int_0^t \ptr_r^{-1}\sharp^g \lambda_r  \, dr  \}\subseteq \mathbb{H}^T(E_o).$$
is also a $d$-dimensional subspace. By Lemma~\ref{lemma:LambdaToAlpha}
$$\textstyle \langle h, k \rangle_{\mathbb{H}} = \int_0^T \lambda_t(\ptr_t \dot k_t) = 0, \qquad \text{for $h = \int_0^{\cdot} \ptr_r^{-1} \sharp^g \lambda_r\, dr \in \check{\Lambda}$, and for any $k \in \tilde V$}.$$ 
Since $\check{\Lambda}$ is $d$-dimensional, it follows that $\mathbb{H}^T(E_o) = \check{\Lambda} \oplus_{\perp} \tilde V$, and furthermore
$$\textstyle \mathbb{H}^T(E_o) = \hat \Lambda \oplus_{\perp} V, \qquad \hat \Lambda = I_{\Sigma} (\check{\Lambda}).$$
Here we have used that $V = I_{\Sigma}^{-1} \tilde V$ and that $\langle I_{\Sigma}h, I_{\Sigma}^{-1} k \rangle_{\mathbb{H}} = \langle h, k \rangle_{\mathbb{H}}$ for $h,k \in \mathbb{H}(E_o)$.
If $\gamma$ is a most probable path, then we must have
$$0 = \frac{d}{ds} \OM_{E_o}(\omega^s) = \int_0^T \langle \dot \omega_t, \dot k_t \rangle \, dt = \langle \omega, k \rangle_{\mathbb{H}},$$
for any $k \in V$, meaning that $\omega \in \hat \Lambda$. In conclusion, for some $\lambda_t$ in $\Lambda$, we must have that
$$\ptr_t^{-1} \sharp^g \lambda_t = \Sigma^{-1/2}_t \dot \omega_t = \Sigma_t^{-1} \dot \omega_{2,t} = \Sigma_t^{-1} \ptr_t (\dot\gamma_t - u_t(\gamma_t)).$$
The result follows.
\end{proof}

\begin{remark}
We cannot conclude \eqref{MPPu} directly from Lemma~\ref{lemma:LambdaToAlpha} without the assumption that the curve is not singular, since the vector fields $t \mapsto \ptr_t v_t$ resulting from affine development will not be all vector fields when $E$ is strictly contained in $TM$. In fact, one can verify that if $\pr_E: TM \to E$ is any projection, then $v_t$ is uniquely determined by $\pr_E v_t$.
\end{remark}

\begin{remark} \label{re:Sigma} Let $\Pi, \tilde \Pi: \mathbb{H}^T(E_o) \to M$ and $I_\Sigma: \mathbb{H}^T(E_o) \to \mathbb{H}^T(E_o)$ be as in the proof of Lemma~\ref{lemma:Singular}.
If we consider $\dot \gamma_t = \ptr_t \Sigma_t^{1/2} \dot \omega_t + u_t(\gamma_t) = \ptr_t \dot {\tilde \omega}_{t} + u_t(\gamma_t)$, we showed in the proof of Lemma~\ref{lemma:Singular} that mapping $\omega$ to $\gamma$ or mapping $\tilde \omega = I_{\Sigma}(\omega)$ to $\gamma$ gives us the same singular values. We also obtain the same development most-probable paths if we optimize the probability of $\int_0^t \Sigma_s^{1/2} \circ dB_s$ sojourning around $\tilde \omega_{t} = \int_0^t \Sigma_s^{1/2} \dot \omega_s ds$. 

To see this, we can consider $\int_0^t \Sigma_s^{1/2} \circ dB_s$ as the Brownian motion of a time-dependent flat metric $G(t) = \langle \Sigma(t)^{-1} v, w \rangle_{g_o}$ on $E_o$. By \cite{CP14}, the Onsager-Machlup functional of $\int_0^t \Sigma_s dB_s$ on $E_o$ is then given by
$$\tilde \omega \mapsto \int_0^{T} \left(\| \dot { \tilde \omega}_{t} \|^2_{G(t)} + \tr_{G(t)} \dot G(t) \right) dt = \int_0^{T} \left(\| \Sigma^{-1/2}_t \dot {\tilde \omega}_{t} \|^2_{g(o)} + \tr_{g(o)} \Sigma_t \dot \Sigma_t^{-1} \right) dt.$$
Since $\|\Sigma^{-1/2}_t \dot {\tilde \omega}_{t} \|_{g(o)} = \| \dot \omega_{t} \|_{g(o)}$, minimizing the above functional over $\tilde \Pi^{-1}(y)$ gives the same result as minimizing $\OM$ as in \eqref{OMEo} over $\Pi^{-1}(y)$.
\end{remark}

\begin{remark} \label{re:distortion}
For the special case $E_o = T_o M$, i.e., the case of processes in a Riemannian manifold, let us compare the results of Theorem~\ref{th:LMDown} for development most probable paths with the statements in Section~\ref{sec:TrueMPPs} regarding most probable paths in the usual sense. In this case, $g$ is a Riemannian metric and $\sharp$ becomes an invertible map. We can choose $\nabla$ to be the Levi-Civita connection of $g$. There are now no singular curves by Remark~\ref{re:NoSingular}. Let $(\nabla u_t)^\dagger:TM \to TM$ be defined by $\langle (\nabla u_t)^\dagger v_1, v_2 \rangle_g = \langle \nabla_{v_2} u_t, v_1 \rangle_g$. The equations \eqref{MPPu} can then be written as
\begin{equation*}   \left\{ \begin{aligned}
  \frac{D}{dt} \Sigma_t^{-1} (\dot \gamma_t -u_t(\gamma_t)) & =  - (\nabla u_t)^\dagger \Sigma_t^{-1} (\dot \gamma_t -u_t(\gamma_t))+ R(\chi_t) \dot \gamma_t,  \\
  \frac{D}{dt} \chi_t & =  \frac{1}{2} (\dot \gamma_t -u_t(\gamma_t)) \wedge \Sigma_t^{-1} (\dot \gamma_t -u_t(\gamma_t)), \qquad \chi_T = 0.
\end{aligned} \right.
\end{equation*}
Note that, in this case, the equations are generalizations of equations of dMPPs found in \cite{GrSo22}. If $\Sigma_t =\id$ and $u = u_t$ becomes time-homogeneous, the resulting formula for the dMPPs is
\begin{equation*}  
  \frac{D}{dt} \dot \gamma +  (\nabla u)^\dagger \dot \gamma_t  = \nabla_{\dot \gamma_t} u + (\nabla u)^\dagger u(\gamma_t),
\end{equation*}
If we compare these equations with the MPPs, i.e., the curves that are critical points of \eqref{OMRie}, we find that the latter are given by
\begin{equation*}  
  \frac{D}{dt} \dot \gamma +  (\nabla u)^\dagger \dot \gamma_t  = \nabla_{\dot \gamma_t} u + (\nabla u)^\dagger u(\gamma_t) + \nabla F,
\end{equation*}
where $F = \DIV(u) - \frac{1}{12} S$ consist of the divergence of $u$ and the scalar curvature, which exactly corresponds to changes in volume.
\end{remark}

\subsection{Optimal control point of view dMPPs} \label{sec:OptimalControl}
Finding development most probable paths can be formulated as an optimal control problem in the following way. We want to consider finding curves $\gamma_t$ solving \eqref{gammatFirsteq} with final point $\gamma_T =y$ and minimizing $\OM_{E_o}(\omega_t) = \frac{1}{2} \int_0^T \langle \dot \omega_t, \dot \omega_t \rangle_{g(o)} \, dt$ as an optimal control problem. For this formulation, we will use the structure of the orthogonal frame bundle of $E$. For analogous definitions of the orthonormal frame bundle of the full tangent space, see \cite[Chapter 2]{Hsu02}. For construction on a subbundle, see \cite[Section 2]{Grong2014CurvaturedimensionIO}.

Assume that the bundle $E$ has rank $J \leq d$. For the sake of simplicity, we will assume that $u_t$ has values in $E$ as well, see Remark~\ref{re:UnotinE} for when this is not the case. Let $\Ort(J) \to \Ort(E) \stackrel{\pi}{\to} M$ be the bundle of orthonormal frames of $E$. Recall that at each $y \in M$, the fiber $\Ort(E) = \pi^{-1}(y)$ consists of linear isometries $f = (f_1, \dots, f_J): \mathbb{R}^J \to E_y$, with~$\Ort(J)$ acting by compositions of the right. Let $f_t = (f_{1,t}, \dots, f_{J,t})$ be an orthonormal frame of $E$ along the curve $\gamma_t$, defined on $(-\ve, \ve)$. Assume that each vector field $f_{j,t}$ is parallel along $\gamma_t$ with respect to some given compatible connection $\nabla$. We introduce the following notation.
\begin{enumerate}[$\bullet$]
    \item If $\dot \gamma_0 = v \in E_{\gamma_0}$, we write $h_{f_0} v =\dot f_0 = h_{f_0} v$.
    \item We write $H_j(f_0) = h_{f_0} f_{j,0}$.
\end{enumerate}
The vector $h_{f_0} v$ is called \emph{the horizontal lift} of the vector $v \in T_{\pi(f_0)} M$. If~$X$ is a vector field on $M$ with values in $E$, we write $hX(f) = h_{f} X(\pi(f))$, which is \emph{the horizontal lift} of~$X$ with respect to $\hat E  = \hat E^\nabla = \spn \{ h_f v \, : \, f\in \Ort(E), v \in T_{\pi(f)} M \}$. Define a vector field
$$H_j: f \mapsto H_j(f) \qquad j=1, \dots, J \qquad \text{on $\Ort(E)$.}$$
Note that $H_1, \dots, H_J$ are globally defined vector fields that give a basis of $\hat E^\nabla$, but cannot be projected to vector fields on $M$.

Write $\dot \omega_{j,t} = \nu_{j,t}$ and let $f_t$ be a parallel curve along~$\gamma_t$, where $\gamma_t$ is the solution of \eqref{gammatFirsteq}. If we introduce notation $\Sigma_t^{1/2} f_{j,0} = \sum_{i=1}^J (\Sigma^{1/2}_t)_{ij} f_{i,0}$, then $f_t$ is the solution of
\begin{equation} \label{control} \dot f_t = \sum_{i,j=1}^J (\Sigma_t^{1/2})_{ij} \nu_{j,t} H_i(f_t) + hu_t(f_t).\end{equation}
We can consider \eqref{control} as a control system on $\Ort(E)$, and finding a developed most probable path as the result of finding an optimal control $\nu_t = (\nu_{1,t}, \dots, \nu_{J,t})$ such that $f_t$ becomes a curve from $f_0$ to $\pi^{-1}(y)$ that minimize the cost function $C(\nu) = \frac{1}{2} \int_0^T \langle \nu_t, \nu_t \rangle_g \, dt$.

This optimization problem can also be described in the following way. Define $\hat \rho_t$ as the time-dependent sub-Riemannian metric on $\hat E$ such that $\sum_{i=1}^J (\Sigma_t^{1/2})_{ij} H_i$, $j=1,2, \dots, J$ form an orthonormal basis. A developed most probable path is then a projection of a curve $f_t$ from $f_0 \in E_0$ to $\Ort(E)_y =\pi^{-1}(y)$ minimizing the energy
$$\tilde {\mathscr{E}}(f_t) = \frac{1}{2}\int_0^T \left\| \dot f_t - hu_t(f_t) \right\|_{\tilde \rho_t}^2 \, dt .$$
If $u_t =0$ and $\hat \rho_t =\hat \rho$, such curves are length minimizing curves of the sub-Riemannian structure $(\hat E, \hat \rho)$.

While $\hat E$ is projected to $E$ by $\pi_*$ bijectively on every fiber, the metric $\hat \rho_t$ is in general not the pull-back of a time-dependent sub-Riemannian metric $\rho_t$ on $E$. We give two notable exceptions to this.
\begin{enumerate}[(I)]
    \item {\it Isotropic covariance:} If $\Sigma_t = c_t \id_{E_o}$, then $\hat \rho_t = \pi^* \rho_t$ with $\rho_t = c_t^{-1} g$. This is reflected in equation \eqref{MPPu} where $\chi_t =0$ if we have isotropic covariance.
    \item {\it Flat connection:} If $\nabla$ is flat on $E$ and $M$ is simply connected, then parallel transport is path independent. Hence, we can parallel transport $\Sigma_t$ to every point giving us an endomorphism $\Sigma_t: E \to E$ which we denote by the same symbol. We then have that $\tilde \rho_t = \pi^* \rho_t$, with $\rho_t = \langle \Sigma_t^{-1} \cdot , \cdot \rangle_g$. This is reflected in equation \eqref{MPPu} in that $K=0$ when $\nabla|E$ is flat, and hence $\chi_t$ play no role in this case. If $M$ is not simply connected, we still have the same identities locally.
\end{enumerate}
If $\hat \rho_t = \pi^* \rho_t$, then development most probable paths are curves from $o$ to $y$ minimizing $\mathscr{E}(\gamma_t) = \frac{1}{2} \int_0^T \| \dot \gamma_t - u_t(\gamma_t)\|_{\rho_t}^2 \, dt$, that is, they become usual sub-Riemannian geodesics on on $M$ with respect to $(E, \rho_t)$.

\begin{remark}
Considering the actual question of controllability, that is, if we can reach any point $y$ in $M$ from $o$ with a developed curve, we know that if $u_t = u$ is time-homogeneous, then we can find a curve $\dot \gamma_t = \ptr_t \int_0^t \Sigma_s^{1/2} \dot \omega_s \, ds + u(\gamma_t)$ connecting any pair of points if $E$ is bracket-generating, see \cite[Proposition~26]{boscain2019introduction}. We call $E$ bracket-generating if the sections of $E$ and their iterated brackets span the entire tangent bundle $TM$.
\end{remark}

\begin{remark} \label{re:NonMarkov}
Assume that $\Sigma_t = \Sigma$ and and $u_t = u$ are independent of time. If $f_0$ is a choice of orthonormal frame $E_o$, we write $\Sigma^{1/2}f_{j,0} = \sum_{i=1}^J (\Sigma^{1/2})_{ij} f_{i,0}$. Then solving $x_t = \ptr_t \Sigma^{1/2} \circ dB_t + u(x_t) dt$ can be considered as first solving
$$d\hat x_t = \sum_{i,j=1}^J (\Sigma^{1/2})_{ij} H_i(\hat x_t) \circ d\hat B_t^j + hu(\hat x_t), \qquad \hat x_0 = f_0,$$
with $x_t = \pi(\hat x_t)$. While $\hat x_t$ is always Markov, the same does not hold for $x_t$, again because in general $\Sigma$ is not rotationally invariant and because parallel transport depends on paths. The cases (I) and (II) above respectively describes the exceptions, which also give that $x_t$ har the Markov property.
\end{remark}

\begin{remark} \label{re:UnotinE}
If $u_t$ does not take values in $E$, we can still provide a similar formulation, by considering the submanifold $P \subseteq \GL(TM)$ of the general frame bundle consisting of frames $f_1, \dots, f_d$ where $f_1, \dots, f_J$ is an orthonormal frame of $E$.
\end{remark}

\subsection{Change of connection}
Since we have used the notion of development to define our developed most-probable paths, we need to show that this definition does not depend on the connection if the results are the same. To be specific several different choices of affine connection that will give us the same solution of
$$dx_t = \ptr_t \Sigma_t^{1/2} \circ dB_t + u_t(\gamma_t) dt, \qquad x_0 = o \in M, \qquad \text{$B_t$ a Brownian motion in $E_o$,}$$
so we should verify that dMPPs are indeed invariant under such choices. 
\begin{proposition} \label{prop:ConVert}
Let $\nabla$ be a given connection that is compatible with a sub-Riemannian structure $E,g$. Let $\tilde \nabla$ be another connection satisfying
\begin{equation} \label{TwoConnections} \tilde \nabla_X Y := \nabla_X Y + \mu_X Y \qquad \mu_X|E =0, \end{equation}
where $X \mapsto \mu_X$ is a $TM$-endomorphism-valued one-form. Then the solutions of the equations \eqref{singular} for singular curves and \eqref{MPPu} for non-singular dMPPs coincides for these connections.
\end{proposition}

\begin{proof}
By definition, $K$ in \eqref{defK} only depends on the restriction of the connection to $E$, so it is sufficient to consider equation \eqref{singular} for singular curve.  Here we observe that
\begin{align*}
    & \frac{\tilde D}{dt} \lambda_t = \frac{D}{dt}\lambda_t - \lambda_t \mu_{\dot \gamma_t}   
\end{align*}
and since $\mu_{X} \dot \gamma_t = \mu_{X} u_t(\gamma_t)$ for any vector field $X$, if we write \eqref{singular} relative to $\tilde \nabla$, then
\begin{align*}
      - \lambda_t (\tilde \nabla u_t) - \lambda_t \tilde T(\dot \gamma_t, \cdot) 
   & = -\lambda_t(\nabla u_t) -\lambda_t(\mu_{\cdot} u_t) - \lambda_t(T(\dot \gamma_t, \cdot )) - \lambda_t \mu_{\gamma_t} +\lambda_t(\mu_{\cdot} u_t) \\
    & = -\lambda_t(\nabla u_t)  - \lambda_t(T(\dot \gamma_t, \cdot )) - \lambda_t \mu_{\gamma_t} 
\end{align*}
which clearly gives the same solutions as those of \eqref{singular} written with respect to $\nabla$.
\end{proof}

Furthermore, the concept of singular curves is not related to the choice of affine connection at all.
\begin{lemma}
Let $\nabla$ and $\tilde \nabla$ be two connections that are compatible with the sub-Riemannian structure $(E,g)$ and let $u_t$ be a vector field. Let $\Dev_u$ and $\widetilde{\Dev}_u$ be the corresponding affine development. Then these maps have the same singular curves. Hence, the definition of singular curves, only depends on the triple $(E,g,u_t)$.
\end{lemma}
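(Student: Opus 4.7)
The plan is to reduce the claim to the intrinsic characterization of singular curves furnished by Lemma~\ref{lemma:Singular}, and then to show that the system \eqref{singular} has exactly the same solution pairs $(\gamma_t, \lambda_t)$ when one replaces $\nabla$ by $\tilde \nabla$. Since affine development with either connection produces precisely the curves $\gamma$ with $\dot \gamma_t - u_t(\gamma_t) \in E$ (only the anti-development $\omega \in \mathbb{H}^T(\mathfrak{e})$ is affected, via parallel transport), the notion of a singular curve $\gamma$ is defined in each case, and the first and third lines of \eqref{singular} refer only to $E$. Hence the entire problem collapses to showing that the middle equation in \eqref{singular} is independent of the choice of compatible connection.

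First I would write $\tilde \nabla_X Y = \nabla_X Y + \mu_X Y$ for a $(1,2)$-tensor $\mu$. Both connections being compatible with $(E,g)$ forces $\mu_X(E) \subseteq E$ for every $X$, which is the only property of $\mu$ that the argument will use. I would then record the three elementary identities
\begin{align*}
    \tfrac{\tilde D}{dt} \lambda_t &= \tfrac{D}{dt} \lambda_t - \lambda_t \circ \mu_{\dot \gamma_t}, \\
    \tilde \nabla u_t &= \nabla u_t + \mu_{\cdot} u_t, \\
    \tilde T(\dot \gamma_t, Y) &= T(\dot \gamma_t, Y) + \mu_{\dot \gamma_t} Y - \mu_Y \dot \gamma_t,
\end{align*}
which follow by direct calculation from the defining formula for $\tilde \nabla$.

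Subtracting the $\nabla$-version of the middle equation from the $\tilde \nabla$-version and evaluating on an arbitrary vector $Y$, the two contributions $\pm \lambda_t(\mu_{\dot \gamma_t} Y)$ cancel and one is left with the single residual term $\lambda_t\bigl(\mu_Y(u_t(\gamma_t) - \dot \gamma_t)\bigr)$. Here the key observation kicks in: by the first condition in \eqref{singular} we have $u_t(\gamma_t) - \dot \gamma_t \in E_{\gamma_t}$; because $\mu_Y$ preserves $E$, the inner term lies in $E_{\gamma_t}$; and by the third condition $\lambda_t$ vanishes on $E_{\gamma_t}$. Hence the residual vanishes, so the $\tilde \nabla$-system is satisfied by a pair $(\gamma, \lambda)$ if and only if the $\nabla$-system is, and Lemma~\ref{lemma:Singular} delivers the equality of the two sets of singular curves; since the reduced system refers only to $E$, $g$, and $u_t$, the final sentence of the lemma is immediate.

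The main obstacle is essentially bookkeeping: arranging the three summands so that the $\mu$-contributions cancel pairwise except for the one term that the singularity hypotheses kill. Conceptually the point is that this is a strictly stronger statement than Proposition~\ref{prop:ConVert}, where $\mu|_E = 0$ was assumed outright. Here $\mu|_E$ may be any (skew) endomorphism of $E$, and the vanishing is purchased not from the tensor $\mu$ itself but from the defining relations of a singular curve, which is what makes the argument work in full generality.
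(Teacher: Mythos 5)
Your proposal is correct and follows essentially the same route as the paper's proof: both write $\tilde \nabla_X Y = \nabla_X Y + \mu_X Y$, use compatibility to get $\mu_X(E) \subseteq E$, and exploit the singularity conditions $\dot \gamma_t - u_t(\gamma_t) \in E$ and $\lambda_t|_E = 0$ to make the $\mu$-contributions in \eqref{singular} cancel, leaving only the residual $\lambda_t\bigl(\mu_{\cdot}(u_t(\gamma_t)-\dot\gamma_t)\bigr)$ which vanishes for exactly the reasons you state. Your added remarks --- that the set of admissible curves is itself connection-independent, and that here (unlike in Proposition~\ref{prop:ConVert}) the cancellation is bought by the singularity hypotheses rather than by assuming $\mu$ vanishes on $E$ --- are correct and slightly more explicit than the paper's presentation.
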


\begin{proof}
Let $\gamma_t$ be a curve that is singular with respect to $\nabla$, and let $\lambda_t$ be a one-form along $\gamma_t$ satisfying~\eqref{singular}. If we write $\tilde \nabla_X Y = \nabla_X Y + \mu_X Y$, then since $\sharp \lambda_t = 0$, we have $\lambda_t(\mu_X \dot \gamma )= \lambda_t(\mu_X u_t)$. Here we have used that $\mu_X(E) \subseteq E$ for any vector field $X$, since we are assuming that both connections are compatible with the sub-Riemannian structure. It follows that
\begin{align*}
    & \frac{\tilde D}{dt} \lambda_t + \lambda_t \tilde \nabla u_t + \lambda_t \tilde T(\dot \gamma, \cdot ) \\
    & = \frac{D}{dt} \lambda_t - \lambda_t\mu_{\dot \gamma}  + \lambda_t \nabla u_t + \lambda_t \mu_{\cdot} u_t + \lambda_t T(\dot \gamma, \cdot) + \lambda_t \mu_{\dot \gamma_t} - \lambda_t \mu_{\cdot} \dot \gamma_t =0.
\end{align*}
Hence $\gamma_t$ is singular with respect to $\tilde \nabla$ as well.
\end{proof}

\section{Examples}
\label{sec:examples}
In this section, we consider several different examples, illustrating the development most probable paths equations (dMPPs) in Theorem~\ref{th:MPPu} in specific cases.
\subsection{Riemannian and sub-Riemannian anisotropic Brownian motion} \label{sec:RSR}
Let $(M,g)$ be a Riemannian manifold. For a given $o \in M$, let $B_t$ be the Brownian motion in $T_o M$. Write $\nabla$ for the Levi-Civita connection $g$. For a symmetric endomorphism $\Sigma: T_o M \to T_o M$, define $x_t$ as the solution of
\begin{equation} \label{AnisBM} dx_t = \ptr_t \Sigma^{1/2} \circ dB_t.\end{equation}
We consider the developed most probable paths of this process. Recall that we are using the symbol $\Sigma: T_{\gamma_t}M \to T_{\gamma_t}M$ for its parallel transport along $\gamma_t$ as well. By Remark~\ref{re:NoSingular}, there are no singular curves, and any dMPP is the solution of 
\begin{equation*} 
\frac{D}{dt}\dot \gamma_t   = \Sigma R(\chi_t)\dot \gamma_t, \qquad 
\frac{D}{dt} \chi_t  = \frac{1}{2} \dot \gamma_t \wedge \Sigma^{-1} \dot \gamma_t, \qquad \chi_T =0.
\end{equation*}
These are exactly the equations found earlier in \cite{grongMostProbablePaths2022}. We can also consider a sub-Riemannian analogue. We look at \eqref{AnisBM} for a connection $\nabla$ that is compatible with a sub-Riemannian structure $(E,g)$ and with $u = 0$. Equations for the most probable path for non-singular curves are given by
\begin{equation*}   \left\{ \begin{aligned}
  \sharp^g \lambda_t & = \Sigma^{-1} \dot \gamma_t \\
  \frac{D}{dt} \lambda_t & = - \lambda_t(T(\dot \gamma_t, \cdot)) + K(\chi_t, \gamma_t),  \\
  \frac{D}{dt} \chi_t & = \frac{1}{2} \dot \gamma_t \wedge \Sigma^{-1} \dot \gamma_t, \qquad \chi_T = 0.
\end{aligned} \right.
\end{equation*}
If either condition (I) or (II) in Section~\ref{sec:OptimalControl} is satisfied so that $\chi_t =0$, then the above equations reduce to the equations for non-singular geodesics in sub-Riemannian manifolds \cite{godoy2017riemannian}.

\subsection{Lie groups with a right invariant structure} \label{sec:Lie}
Let $G$ be a Lie group with Lie algebra $(\mathfrak{g}, [\cdot, \cdot ]_{\Lie})$. We will use the same symbol for elements in $\mathfrak{g}$ and their corresponding right invariant vector field on $G$. Let $E_{1_G} = \mathfrak{e} \subseteq \mathfrak{g}$ be a subspace with an inner product $\langle \cdot , \cdot \rangle$ and define a sub-Riemannian structure $(E,g)$ by right translation. Let $\nabla$ be the right invariant connection on $G$. Recall that this connection is flat and with torsion such that $T(A,B) = [A,B]_{\Lie} =\ad(A)B = -\ad(B)A$ for any right invariant vector fields $A$ and $B$. Let $u_t$ be a vector field defined by right translation of a curve $a_t$ in $\mathfrak{g}$. Finally, consider a symmetric transformation $\Sigma_t$ of $\mathfrak{e}$. Let $B_t$ be a Brownian motion in $\mathfrak{e}$, and consider $x_t$ as the solution of $x_0 = 1_G$,
$$dx_t = \ptr_t \Sigma_t^{1/2} \circ dB_t + u_t(x_t) \, dt = \circ dZ_t \cdot x_t,$$
with $Z_t = \int_0^t \Sigma_s^{1/2} \circ dB_s + \int_0^t a_s ds$. Since $\nabla$ is a flat connection, the development most probable paths are solutions of
\begin{equation} \label{LieMPP} \left\{ \begin{aligned}
  \dot \gamma_t & = z_t \cdot \gamma_t,\\
 \sharp^g \alpha_t &= \Sigma_t^{-1} (z_t -a_t), \\
  \dot \alpha_t & = - (\ad(z_t))^*\alpha_t,
\end{aligned} \right.
\end{equation}
where $z_t$ and $\alpha_t$ are respectively curves $\mathfrak{g}$ and $\mathfrak{g}^*$ and $\ad(z_t)^*$ denotes the pull-back. Singular curves will be the curves where there is a non-trivial solution to
\begin{equation} \label{LieSingular} \left\{ \begin{aligned}
  \dot \gamma_t & = z_t \cdot \gamma_t,\\
  z_t-a_t & \in \mathfrak{e}, \\
 \sharp^g \alpha_t &= 0, \\
  \dot \alpha_t & = - (\ad(z_t))^*\alpha_t.
\end{aligned} \right.
\end{equation}

\begin{remark}
If we replace right with left invariance in the above discussion, we get similar formulas but with $\ad$ replaced with $-\ad$.
\end{remark}

\subsection{Metrics invariant by a subgroup}
We look at the following particular case of Section~\ref{sec:Lie}. Assume that $\mathfrak{g} = \mathfrak{e} \oplus \mathfrak{k}$ for some subalgebra $\mathfrak{k}$. Assume furthermore that $\mathfrak{g}$ has an inner product that is invariant under the adjoint action of $\mathfrak{k}$, with $\mathfrak{e} = \mathfrak{k}^\perp$. It follows that $[\mathfrak{k}, \mathfrak{e}] \subseteq \mathfrak{e}$. We will also assume that $[\mathfrak{e}, \mathfrak{e}] \subseteq \mathfrak{k}$. We define a Riemannian metric $\bar{g}$ by left translation of the inner product on $\mathfrak{g}$. Write $E_x = x \cdot \mathfrak{e}$ and $g = \bar{g}|_E$. If we define $w_t = \sharp^{\bar{g}} \alpha_t = w_{\mathfrak{e},t} + w_{\mathfrak{k},t}$, then
\begin{equation*} \left\{ \begin{aligned}
  \dot \gamma_t & = \gamma_t \cdot (\Sigma_t w_{\mathfrak{e},t} + a_{t})\\
  \dot w_{\mathfrak{e},t} & = (\ad(\Sigma_t w_{\mathfrak{e},t} + a_{\mathfrak{e},t}))^\dagger w_{\mathfrak{k},t} + (\ad(a_{\mathfrak{k},t}))^\dagger w_{\mathfrak{e},t}, \\
  \dot w_{\mathfrak{k},t} & = (\ad(\Sigma_t w_{\mathfrak{e},t} + a_{\mathfrak{e},t}))^\dagger w_{\mathfrak{e},t} + (\ad(a_{\mathfrak{k},t}))^\dagger w_{\mathfrak{k},t},,
\end{aligned} \right.
\end{equation*}
where $\ad(z)^\dagger$ is the adjoint with respect to $\langle \cdot , \cdot \rangle$.

\subsection{Locally homogeneous spaces} For our final, we assume that $(M,E,g)$ is a sub-Riemannian manifold with a compatible connection $\nabla$ such that $u_t$, as well as its curvature $R$ and and torsion $T$ are all parallel. We can solve then solve equations \eqref{MPPu} at one point $o \in M$.
\begin{equation*}   \left\{ \begin{aligned}
  \dot \gamma_t & = \ptr_t (\Sigma_t \sharp^g \alpha_t + a_t), \qquad a_t = u_t(o) \\
  \dot \alpha_t & = - \alpha_t(T(\Sigma_t \sharp^g \alpha_t + a_t, \cdot)) + K(\check{\chi}_t, \Sigma_t \sharp^g \alpha_t + a_t),  \\
  \dot {\check{\chi}}_t & = \frac{1}{2} \Sigma_t \sharp^g \alpha_t \wedge \sharp^g \alpha_t, \qquad \alpha_T = 0,
\end{aligned} \right.
\end{equation*}
where $\alpha_t$ and $\check{\chi}_t$ are curves in respectively $T_o^* M$ and $\wedge^2 E_o$. 

In particular, if $u =0$, and both $R$ and $T$ are parallel, then $M$ can locally be written as a homogeneous space, i.e., $(M, \nabla)$ is locally affinely equivalent to the canonical connection on a space $G/K$.

\section{Submersions}
\label{sec:submersions}

We here study development most probable paths between points in the image of submersions $\pi:M \to N$. The motivation is to describe the relations between dMPPs in $M$ and $N$, similar to the relations between geodesics on Riemannian submersions described by O'Neill in \cite{MR216432}. With this description, we prepare a formalism to use in Section~\ref{sec:diffeos} for a submersion from an infinite-dimensional manifold to a finite dimensional manifold.

\subsection{General submersions}
Let $M$ be a manifold with a sub-Riemannian structure $(E,g)$. Let $\nabla$ be a connection compatible with the sub-Riemannian structure. Let $\Sigma_t: E_o \to E_o$ be a symmetric transformation of the space $E_o$ at $o \in M$.
Relative to a time-dependent vector field, we define
$$dx_t = \ptr_t \Sigma_t^{1/2} \circ dB_t +  u_t(x_t) dt, \qquad x_0 = o,$$
with $B_t$ being a Brownian motion in $E_o$. As before, we are studying curves $\gamma_t$ from $o$ to $y = \gamma_T= \Pi(\omega)$ solving $\dot \gamma_t =\ptr_t \Sigma_t^{1/2} \dot \omega_t+ u_t(\gamma_t)$ such that $\omega_t$ is most probable with respect to $B_t$ in $\Pi^{-1}(y)$.
Let $\pi: M \to N$ be a surjective submersion and write $\calV = \ker \pi_*$ for the vertical space. We first note the following relation.
\begin{lemma}
If $\check{\gamma}_t = \pi(\gamma_t)$ is a curve from $\check{o} = \pi(o)$ to $\check{y} = \pi(y) = (\pi \circ \Pi)(\omega)$, and if $\omega$ is a regular point of $\pi \circ \Pi$ and a most probable path in $(\pi \circ \Pi)^{-1}(\check{y})$, then $\gamma_t$ is a solution \eqref{MPPu} with $\lambda_T$ vanishing on $\calV_{y}$
\end{lemma}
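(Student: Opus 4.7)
The plan is to adapt the proof of Theorem \ref{th:MPPu}, replacing the pointwise constraint $\Pi(\omega^s) = y$ by the weaker constraint $(\pi \circ \Pi)(\omega^s) = \check{y}$. As in that proof, I would pass to $\omega_{2,t} = \int_0^t \Sigma_s^{1/2} \dot\omega_s \, ds$ so that $\gamma_t = \Dev_u(\omega_2)_t = \tilde\Pi(\omega_2)$, and use Lemma \ref{lemma:AffineDev} to write $\tilde\Pi_{\omega_2,*} k_2 = \ptr_T v_T$, where $v_t$ is the solution of \eqref{ktov} with $\omega \to \omega_2$ and $k \to k_2$.

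Next, I would introduce the enlarged admissible variation space
$$V_{2,\pi} = \{k_2 \in \mathbb{H}^T(\mathfrak{e}) : \pi_*(\ptr_T v_T) = 0\},$$
which has codimension $\dim N$ in $\mathbb{H}^T(\mathfrak{e})$ by the regularity of $\omega$ for $\pi \circ \Pi$. In parallel, I would consider the $\dim N$-dimensional space
$$\Lambda_\pi = \{\lambda_\cdot \text{ solving } \eqref{Pre-MPP} : \lambda_T \in \Ann(\calV_y)\}.$$
For any $\lambda \in \Lambda_\pi$ and $k_2 \in V_{2,\pi}$, Lemma \ref{lemma:LambdaToAlpha} together with $\alpha_t \equiv 0$ (enforced by \eqref{Pre-MPP}) and $\lambda_T(\ptr_T v_T) = 0$ (since $\ptr_T v_T \in \calV_y$ and $\lambda_T \in \Ann(\calV_y)$) gives
$$\int_0^T \lambda_t(\ptr_t \dot k_{2,t})\, dt = 0.$$
Since $\lambda_t(\ptr_t \dot k_{2,t}) = \langle \ptr_t^{-1}\sharp^g \lambda_t, \dot k_{2,t}\rangle$, this is exactly an $\mathbb{H}$-orthogonality: $\check\Lambda_\pi = \{\int_0^\cdot \ptr_r^{-1} \sharp^g \lambda_r\, dr : \lambda \in \Lambda_\pi\} \perp V_{2,\pi}$.

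The key step, which I expect to be the main obstacle, is to verify that $\check\Lambda_\pi$ is $\dim N$-dimensional, giving the orthogonal decomposition $\mathbb{H}^T(\mathfrak{e}) = \check\Lambda_\pi \oplus_\perp V_{2,\pi}$. If some non-zero $\lambda \in \Lambda_\pi$ had $\sharp^g \lambda \equiv 0$, i.e.\ $\lambda_t|_{E_{\gamma_t}} \equiv 0$, then applying the displayed identity above to arbitrary $k_2$ would show that $\lambda_T$ annihilates the whole image of $\tilde\Pi_{\omega_2,*}$. Combined with $\lambda_T \in \Ann(\calV_y)$, the form $\lambda_T$ would descend to a non-zero element of $T_{\check y}^* N$ annihilating the image of $(\pi \circ \tilde\Pi)_{\omega_2,*}$, contradicting the regularity of $\omega$ for $\pi \circ \Pi$. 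This is the analogue for $\pi \circ \Pi$ of the singular-curve argument in Lemma \ref{lemma:Singular} and is exactly where the regularity hypothesis enters.

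With the decomposition in hand, multiplying by $\Sigma_t^{1/2}$ transports it to a decomposition $\mathbb{H}^T(\mathfrak{e}) = \hat\Lambda_\pi \oplus_\perp V_\pi$ of the $\omega$-side, where $V_\pi = \{k = \frac{d}{ds}\omega^s|_{s=0} : (\pi\circ\Pi)(\omega^s) = \check{y}\}$. Since $\omega$ minimizes $\OM$ on $(\pi\circ\Pi)^{-1}(\check y)$, we get $\omega \perp V_\pi$ in $\mathbb{H}$, hence $\omega \in \hat\Lambda_\pi$, which as in the proof of Theorem \ref{th:MPPu} is exactly the identity $\sharp^g \lambda_t = \Sigma_t^{-1}(\dot\gamma_t - u_t(\gamma_t))$ for some $\lambda \in \Lambda_\pi$. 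This is the system \eqref{MPPu} with the additional terminal condition $\lambda_T \in \Ann(\calV_y)$, as claimed.
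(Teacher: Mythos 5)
Your proposal is correct and follows exactly the route the paper intends: the paper's proof is a one-line remark saying to rerun the proof of Theorem~\ref{th:MPPu} with the admissible variations enlarged to those satisfying $\frac{d}{ds}\gamma_T^s|_{s=0}\in\calV_y$, which forces the multiplier $\lambda_T$ into $\Ann(\calV_y)$. Your write-up simply fills in the details the paper leaves implicit, and in particular your ``key step'' (injectivity of $\lambda\mapsto\check\lambda$ on $\Lambda_\pi$, with the descended form $\check\lambda_T$ contradicting regularity of $\pi\circ\Pi$) is the correct analogue of how non-singularity is used in Theorem~\ref{th:MPPu}.
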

\begin{proof}
This follows from considering the proof of Theorem~\ref{th:MPPu}, but with variations $\omega^s$ of $\omega$ such that if $\dot{\gamma}_t^s = u_t(\gamma_t^s) + \ptr_t^s \Sigma_t^{1/2} \dot \omega_t^s$, then $\frac{d}{ds} \gamma_t^s|_{s=0} \in \calV_y$.
\end{proof}

We consider the following special case. For the surjective submersion $\pi: M \to N$, if $\check y \in N$, write $M_{\check y} = \pi^{-1}(\check y)$. Let $(\check E, \check g)$ be a sub-Riemannian structure on $N$ such that $\pi_*|_E: E \to \check{E}$ is surjective with
$$\langle v, w \rangle_g = \langle \pi_* v, \pi_*w \rangle_{\check{g}}, \qquad v,w \in \calH := (\calV \cap E)^\perp,$$
where the orthogonal complement is taken within $E$. In other words, we can write $E = \calH \oplus_{\perp} \calV \cap E$ with $\pi_*$ restricted to $\calH_y$ being a linear isometry onto $\check E_{\pi(y)}$. For a vector $w \in \check{E}_{\check y}$ and $y \in M_{\check y}$, we define $h_y w$ as the unique element in $\calH_y$ that projects to $w$. For a vector field $X$ with values in $\check E$, we define $hX(y) = h_y X(\pi(y))$.

Let $o \in M$ be a point with $\pi(o) = \check{o}$. We let 
$\Sigma_t$ be an invertible, symmetric positive map of $E$. We assume that $\Sigma_t$ respect the splitting $E_o = \calH_o \oplus (E_o \cap \calV_o)$ in the sense that $\langle \Sigma_t v, w \rangle_g = \langle  v, \Sigma_t w \rangle_g =0$ for any $v \in \calH_o$ and $w \in E_o \cap \calV_o$. It then follows from our assumptions on $\Sigma_t$ that for any $v \in \calH_o$,
$$\Sigma_t v = h_o \check \Sigma_t \pi_* v,$$
for some symmetric map $\check \Sigma_t: \check E_{\check o} \to \check E_{\check o}$. Finally, we will assume that the following holds.
\begin{enumerate}[$\bullet$]
    \item The vector field $u_t$ takes values in $E$ and is $\pi$-related to a vector field $\check{u}_t$ on $N$, i.e., we have
    $$\pi_* u_t = \check{u}_t \circ \pi.$$
    \item Identifying $\calH$ with $\pi^*\check{E}$, we have that $\calH$ is parallel with respect to $\nabla$ and furthermore, the restriction of $\nabla$ to $\calH$ is the pull-back connection of $\check{\nabla}|\check{E}$ for some connection $\check{\nabla}$ on $N$ which is compatible with $(\check{E},\check g)$. In other words, for any vector field $Y$ on $N$, we have
    \begin{equation} \label{PBC} \nabla_{v} hY = h \check{\nabla}_{\pi_* v} Y, \qquad v\in TM.\end{equation}
\end{enumerate}
If we define $\check{x}_t = \pi(x_t)$, we now obtain an analogous SDE on the base space
\begin{equation} \label{checkxt} d\check{x}_t = \cptr_t \check\Sigma_r^{1/2} \circ d\check B_t + \check{u}_t(\check{x}_t) \, dt.\end{equation}
where $\pi_* B_t = \check{B}_t$ is a Brownian motion in $\check E_{\check o}$. This leads to the following result.

\begin{proposition} \label{prop:Lifted1}
If $\check{\gamma}_t$ is a developed most probable path of $\check{x}_t$ starting at $\check{o}$ and not singular, then it is the projection of a solution $\gamma_t$ of \eqref{MPPu} starting at $o$ with $\lambda_0$ vanishing on~$\calV_o$.
\end{proposition}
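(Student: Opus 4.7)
The plan is to construct a lift $(\gamma_t,\lambda_t,\chi_t)$ on $M$ from the MPP data of $\check\gamma_t$ on $N$ and verify it satisfies \eqref{MPPu} with $\lambda_0|_{\calV_o}=0$. First, apply Theorem~\ref{th:MPPu} on $N$ to $\check\gamma_t$ to obtain $\check\lambda_t$ along $\check\gamma_t$ and $\check\chi_t\in\wedge^2\check E$ with $\check\chi_T=0$ solving the analogue of \eqref{MPPu} on $N$. Define $\gamma_t$ as the solution of $\dot\gamma_t = h_{\gamma_t}(\dot{\check\gamma}_t-\check u_t(\check\gamma_t)) + u_t(\gamma_t)$ with $\gamma_0=o$; the hypothesis that $u_t$ is $\pi$-related to $\check u_t$ guarantees $\pi(\gamma_t)=\check\gamma_t$ and that $\dot\gamma_t - u_t(\gamma_t)$ is $\calH$-valued. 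Put $\lambda_t := \pi^*\check\lambda_t$ along $\gamma_t$, and let $\chi_t\in\wedge^2\calH_{\gamma_t}\subseteq\wedge^2 E_{\gamma_t}$ be the horizontal lift of $\check\chi_t$; both make sense because $\pi_*|_{\calH}$ is a fiberwise isometry onto $\check E$.

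The first equation of \eqref{MPPu} is immediate: since $\lambda_t$ vanishes on $\calV$ and $\pi_*|_\calH$ is an isometry, $\sharp^g\lambda_t = h_{\gamma_t}\check\sharp^{\check g}\check\lambda_t$; using the identity $\Sigma_t v = h_y\check\Sigma_t\pi_* v$ for $v\in\calH$ (a consequence of the assumption that $\Sigma_t$ pairs $\calH$ and $E\cap\calV$ trivially) one obtains $\Sigma_t\sharp^g\lambda_t = h_{\gamma_t}\check\Sigma_t\check\sharp^{\check g}\check\lambda_t = h_{\gamma_t}(\dot{\check\gamma}_t-\check u_t(\check\gamma_t)) = \dot\gamma_t - u_t(\gamma_t)$. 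For the remaining two equations, \eqref{PBC} together with the parallelism of $\calH$ implies that $\frac{D}{dt}\lambda_t$ agrees with the pullback of $\frac{\check D}{dt}\check\lambda_t$ on $\calH$-test vectors and still annihilates $\calV$, while $\chi_t$ stays in $\wedge^2\calH$ with $\frac{D}{dt}\chi_t$ projecting to $\frac{\check D}{dt}\check\chi_t$. Paired with $v\in\calH$, the curvature term $K(\chi_t,\dot\gamma_t)$ matches $\check K(\check\chi_t,\dot{\check\gamma}_t)(\pi_*v)$ via \eqref{PBC}; paired with $v\in\calV$, it vanishes since $R(\dot\gamma_t,v)$ preserves $\calH$ (by parallelism of $\calH$) and so maps $\wedge^2\calH$ into $\wedge^2\calH$, which is annihilated by $\lambda_t|_\calV=0$ after pairing. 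The $\lambda_t(\nabla u_t)$ and torsion terms descend on $\calH$ by $\pi$-relatedness of $u_t$ and vanish on $\calV$ since $\lambda_t|_\calV = 0$. Finally $\chi_T = h\check\chi_T = 0$, and $\lambda_0 = \pi^*\check\lambda_0$ automatically annihilates $\calV_o = \ker\pi_{*,o}$.

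The main obstacle is the bookkeeping in the second paragraph: rigorously matching the torsion and curvature of $\nabla$ on $M$, evaluated on horizontal inputs, with those of $\check\nabla$ on $N$, and confirming that no vertical components develop in $\lambda_t$ or $\chi_t$ along the evolution. Should cross-terms between $\calH$ and $\calV$ in $\nabla$ obstruct a direct identification, Proposition~\ref{prop:ConVert} permits a modification of $\nabla$ off $E$ that does not alter solutions of \eqref{MPPu}, allowing reduction to a connection for which $\calV$ is also parallel and the horizontal-vertical splitting is fully respected.
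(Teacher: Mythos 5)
Your construction coincides with the paper's proof: the same lift $\dot\gamma_t = h_{\gamma_t}(\dot{\check\gamma}_t-\check u_t(\check\gamma_t)) + u_t(\gamma_t)$, $\lambda_t=\pi^*\check\lambda_t$, $\chi_t=h_{\gamma_t}\check\chi_t$, the same verification of the first equation via $\Sigma_t v = h_y\check\Sigma_t\pi_*v$, and the same appeal to Proposition~\ref{prop:ConVert}. However, one step is mis-justified. You claim that $K(\chi_t,\dot\gamma_t)$ vanishes on vertical vectors because $R(\dot\gamma_t,v)$ preserves $\calH$ and $\wedge^2\calH$ is ``annihilated by $\lambda_t|_\calV=0$ after pairing.'' This is a non sequitur: $K(\chi_t,\dot\gamma_t)(v)=\langle R(\dot\gamma_t,v)v_1,v_2\rangle_g$ for $\chi_t=v_1\wedge v_2$ involves the metric $g$, not $\lambda_t$, so the vanishing of $\lambda_t$ on $\calV$ is irrelevant; and an endomorphism merely \emph{preserving} $\calH$ can pair nontrivially with $v_1,v_2\in\calH$ under $g$. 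The correct reason comes from \eqref{PBC}: since $\nabla|_{\calH}$ is the pull-back of $\check\nabla|\check E$, its curvature factors through $\pi_*$, i.e.\ $R(w_1,w_2)|_{\calH}=h\,\check R(\pi_*w_1,\pi_*w_2)$ (for instance $R(hX,V)hY=-\nabla_V h\check\nabla_XY=0$ for vertical $V$), so $R(\dot\gamma_t,v)|_{\calH}=0$ whenever $\pi_*v=0$. This is also what gives the matching $K(\chi_t,\dot\gamma_t)=\pi^*\check K(\check\chi_t,\dot{\check\gamma}_t)$ on horizontal vectors.

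Second, the connection normalization you relegate to a fallback in your last paragraph is not optional — it is the backbone of the argument, and the paper performs it first. Without modifying $\nabla$ off $E$ (licensed by Proposition~\ref{prop:ConVert}, after noting $E\cap\calV$ is already parallel since $\calH$, $E$ and $g$ are) so that a complement $\calP\supseteq\calH$ of $\calV$ carries the pull-back of $\check\nabla$ on $TN$ and $\calV$ is parallel, your claims that $\frac{D}{dt}\lambda_t$ ``still annihilates $\calV$'' and that the torsion terms $\lambda_t(T(\dot\gamma_t,V))$, $V\in\calV$, drop out have no justification: they require precisely the identities the paper derives for the normalized connection, namely $T(hX,hY)=h\check T(X,Y)+\calR(hX,hY)$ with $\calR$ vertical-valued, and $T(hX,V),\,T(V,W)\in\calV$, so that any $\alpha$ vanishing on $\calV$ satisfies $\alpha(T(v,w))=\alpha(h\check T(\pi_*v,\pi_*w))$. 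Once you promote that fallback to the opening move and replace the curvature argument as above, your proof is the paper's proof.
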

In other words, the dMPPs of $\check{x}_t$ are projections of the projections of the dMPPs of $x_t$ where the initial condition of the covector vanishes on $\calV_o$; paralelling O'Neills result in \cite{MR216432} for Riemannian submersion and geodesics.

\begin{proof}
Since $\calH$ is parallel with respect to $\nabla$, this implies that $E \cap \calV$ is parallel from compatibility of the connection with the metric. Using Proposition~\ref{prop:ConVert}, we are free to modify the connection $\nabla$ outside of $E\subseteq TM$. Choose a complement~$\calP$ to~$\calV$ in $TM$ such that $\calH \subseteq \calP$. With respect to~$\calP$, we have well-defined horizontal lifts for all elements in $TN$. Without loss of generality, we may assume that~$\nabla$ on~$\calP$ is the pull-back connection of $\check \nabla$ on~$TN$, in the sense that is satisfies \eqref{PBC} for horizontal lifts of all vector fields. We may also assume that $\calV$ is parallel. 

We note the following about the torsion of $\nabla$. Let $X$ and $Y$ be vector fields on~$N$, while $V$ and $W$ are sections of $\calV$. Let $\calR$ be the curvature of $\calP$, that is the tensor
\begin{equation} 
\label{calR} \calR(hX+V,Y+W) = \pr_{\calV} [hX, hY],
\end{equation}
where $\pr_{\calV}$ is the projection to $\calV$ with kernel $\calP$. Since $hX$ and $hY$ are $\pi$-related to vector fields $X$ and $Y$ on $N$, we have
$$[hX,hY] = h[X,Y] + \calR(hX, hY).$$
Furthermore, we note that $[hX,V]$ will be in $\calV$ for any vector field $V$ with $\calV$, since $V$ is $\pi$-related to $0$. Because $\nabla$ is the pull-back connection on $\calP$, $\nabla_{hX} hY = h\check \nabla_X Y$ and $\nabla_V hX =0$. From these identities, it follows that for vector fields $X$, $Y$ in $\calV$ and vector fields $V$ and $W$ with values in $\calV$,
$$T(hX, hY) = h \check T(X,Y) + \calR(hX,hY), \qquad T(hX, V)|_x, T(V,W)|_x  \in \calV .$$
In particular, if $\alpha \in T^*M$ vanishes on $\calV$, then $\alpha(T(v,w)) = \alpha(h\check T(\pi_* v, \pi_* w))$.

Write $u_t = h\check u_t + u^\calV_t$. Let $(\check{\gamma}_t, \check \lambda_t, \check \chi_t)$ be a solution to \eqref{MPPu} on $N$. Define $(\gamma_t, \lambda_t, \chi_t)$ by
\begin{align*}
    \dot \gamma_t & = h_{\gamma_t} (\dot{\check{\gamma}}_t -\check u_t(\check \gamma_t)) + u_t(\gamma_t), \\
    \lambda_t & = \pi_{\gamma_t}^* \check \lambda_t, \\
    \chi_t & = h_{\gamma_t} \check \chi_t.
\end{align*}
By this definition, $\sharp^g \lambda_t =  \Sigma_t^{-1} (\dot \gamma - u_t(\gamma_t))$, $\frac{D}{dt} \chi_t = h_{\gamma_t} \frac{\check D}{dt} \chi_t = \Sigma_t \sharp^g \lambda_t \wedge \sharp^g \lambda_t$, and furthermore,
\begin{align*}
    & \frac{D}{dt} \lambda_t + \lambda_t(\nabla u_t) + \lambda_t(T(\dot \gamma_t, \cdot)) + K(\chi_t, \dot \gamma_t) \\
    & \qquad= \pi^*_{\gamma_t} \frac{\check D}{dt} \check \lambda_t + \check \lambda_t(\check \nabla_{\pi_t} \check u_t) + \check \lambda_t(\check T(\dot {\check \gamma}_t,\pi_*  \cdot) + \pi^*_{\gamma_t} \check K(\check \chi_t, \dot {\check \gamma}_t) =0.
\end{align*}
This completes the proof of the result.
\end{proof}

With a similar approach, we have the following.
\begin{proposition}\label{prop:Lifted2}
Under the same assumptions as in Proposition~\ref{prop:Lifted1}, the curve $\check{\gamma}$ is singular if and only if it is the projection of a solution $\gamma$ of the equations \eqref{singular} for singular curves on $M$, which in addition satisfies $\lambda_0$ vanishing on $E+\calV$.
\end{proposition}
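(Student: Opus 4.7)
The plan is to mirror the architecture of the proof of Proposition~\ref{prop:Lifted1}, replacing the most-probable-path equations \eqref{MPPu} by the singular-curve equations \eqref{singular}. As preparation, I would invoke Proposition~\ref{prop:ConVert} to modify $\nabla$ outside $E$: pick a complement $\calP \supseteq \calH$ to $\calV$, arrange for $\calV$ to be parallel, and take $\nabla|_{\calP}$ to be the pull-back of $\check\nabla$. I plan to reuse the torsion identities $T(hX,hY) = h\check T(X,Y) + \calR(hX,hY)$ and $T(hX,V), T(V,W) \in \calV$, together with the pull-back identity $\nabla_X \pi^*\check\lambda = \pi^*\check\nabla_{\pi_* X}\check\lambda$ for one-forms, plus the elementary identity $E + \calV = \calH + \calV$ (since $E \cap \calV \subseteq \calV$).

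For the forward direction, given a singular $\check\gamma$ on $N$ with witness $\check\lambda_t$, I would define the lift by $\dot\gamma_t = h_{\gamma_t}(\dot{\check\gamma}_t - \check u_t(\check\gamma_t)) + u_t(\gamma_t)$, so that $\dot\gamma_t - u_t(\gamma_t) \in \calH \subseteq E$, and set $\lambda_t = \pi_{\gamma_t}^*\check\lambda_t$. Non-vanishing of $\lambda_t$ is inherited from $\check\lambda_t$, and the vanishing of $\lambda_t$ on $E + \calV$ follows from $\pi_*(E+\calV) = \check E$ and $\check\lambda_t|_{\check E} = 0$. The ODE for $\lambda_t$ in \eqref{singular} is then a routine pull-back calculation, identical in structure to the one concluding the proof of Proposition~\ref{prop:Lifted1}: each of $\frac{D}{dt}\lambda_t$, $\lambda_t(\nabla u_t)$, and $\lambda_t(T(\dot\gamma_t,\cdot))$ pulls back term-by-term using the pull-back identity and the torsion decomposition.

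For the reverse direction, given $\gamma_t$ satisfying \eqref{singular} with $\lambda_0|_{E+\calV} = 0$, the goal is to descend $\lambda_t$ to a witness $\check\lambda_t$ on $N$. Since $\lambda_t|_E = 0$ is part of \eqref{singular} and $E+\calV = \calH + \calV$, the extra condition amounts to $\lambda_0|_\calV = 0$, and the heart of the proof is propagating this to $\lambda_t|_\calV \equiv 0$. For any parallel $V_t$ along $\gamma_t$ starting in $\calV$, parallelism of $\calV$ keeps $V_t \in \calV$, and the ODE in \eqref{singular} gives $(\lambda_t(V_t))' = -\lambda_t(\nabla_{V_t} u_t) - \lambda_t(T(\dot\gamma_t, V_t))$. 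Writing $u_t = h\check u_t + u_t^{\calV}$, the pull-back property forces $\nabla_{V_t} h\check u_t = 0$ and parallelism of $\calV$ forces $\nabla_{V_t} u_t^{\calV} \in \calV$; likewise the torsion identities combined with $\dot\gamma_t - u_t \in E$ give $T(\dot\gamma_t, V_t) \in \calV$. Hence the right-hand side is a linear function of $\lambda_t|_\calV$ only, so uniqueness for this closed linear ODE system yields $\lambda_t|_\calV \equiv 0$. Then $\lambda_t$ descends uniquely to $\check\lambda_t$ along $\check\gamma_t$, and running the forward pull-back calculations in reverse shows $\check\lambda_t$ satisfies \eqref{singular} on $N$.

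The only non-routine step is this propagation of the vanishing condition in the reverse direction. The crucial structural input is that singularity forces $\dot\gamma_t - u_t(\gamma_t) \in E$ rather than leaving $\dot\gamma_t$ unconstrained; this, combined with the choice of connection making $\calV$ parallel, is exactly what closes the $\calV$-component of the ODE for $\lambda_t$ and allows the initial condition to propagate to all times.
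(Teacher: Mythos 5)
Your proposal is correct and follows exactly the route the paper intends: Proposition~\ref{prop:Lifted2} is stated with no separate proof beyond ``with a similar approach,'' and your argument is precisely the adaptation of the proof of Proposition~\ref{prop:Lifted1} (connection modification via Proposition~\ref{prop:ConVert}, the torsion identities $T(hX,hY)=h\check T(X,Y)+\calR(hX,hY)$ and $T(hX,V),T(V,W)\in\calV$, pull-back/descent of $\lambda_t$) together with the one genuinely new ingredient the singular case requires. That ingredient --- propagating $\lambda_0|_{\calV}=0$ to all $t$ by observing that $\nabla_{V}h\check u_t=0$, $\nabla_V u_t^{\calV}\in\calV$ and $T(\dot\gamma_t,V)\in\calV$ close the $\calV$-components of the $\lambda$-equation into a linear ODE with zero initial data --- is sound, so the descent to a non-vanishing witness $\check\lambda_t$ with $\check\lambda_t|_{\check E}=0$ goes through as you describe.
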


\begin{remark} \label{re:CanTakeH}
For Proposition~\ref{prop:Lifted1} and Proposition~\ref{prop:Lifted2}, if we are only interested in finding lifted formulas for singular curves and most probable paths on $N$, we can without loss of generality assume that $E = \calH$, as $E\cap \calV$ will play no role in the equations. We can also simplify to $u_t = h \check u_t$
\end{remark}

\subsection{Examples: Homogeneous spaces} \label{sec:ExHom}
As an example of submersion we consider a homogeneous space. Let $G$ be a Lie group with Lie algebra $\mathfrak{g}$. Let $\mathfrak{k}$ be a subalgebra corresponding to a closed subgroup $K$, and define $N = G/K$. We will assume that we have an inner product $\langle \cdot , \cdot \rangle$ on $\mathfrak{g}$. Write $\mathfrak{p} = \mathfrak{k}^\perp$ for the orthogonal complement of $\mathfrak{k}$. We observe that if $\pi: G \to N = G/K$ is the canonical projection then
$$\calV_x = \ker \pi_{*,x} = x \cdot \mathfrak{k}.$$
If $A$ is a non-zero left-invariant vector field with $A|_{1_G} \in \mathfrak{p}$, then $\pi_* A$ is non-vanishing, but only a vector field on $N$ if $A|_{1_G}$ is invariant under the action of $K$. On the other hand, if $A$ is a right invariant vector field, then $\pi_* A$ does define a vector field on $N$, but this vector field can vanish at some points.

We consider the following processes $\check{x}_t$ on homogeneous spaces that are projections of process $x_t$ on Lie groups.

\begin{example} \label{submersion}
In the first example we consider left-invariant structures on $G$, so we start by assuming that the inner product $\langle \cdot , \cdot \rangle$ is on $\mathfrak{g}$ is invariant under the adjoint action of $K$.
Define $E= \calP$ by left translation of $\mathfrak{p}$ and with a metric $g$ also obtained by left translation. Then for every $y \in G$, $\pi_{*,y}: \calP_y \to T_{\pi(y)}N$ is an invertible map which induces a Riemannian metric $\check{g}$ on $TN$ by the assumption of the invariance of the inner product under the action of $K$. Let $h_y v \in \calP_y$ denote the horizontal lift of a vector $v \in T_{\check y} N$, $y \in \pi^{-1}(\check y)$. Let $\nabla$ be the left-invariant connection on $G$, i.e., the connection uniquely defined by all left-invariant vector fields being parallel. We can define a connection $\check{\nabla}$ on $N$ by
$$\nabla_{hX} hY = h \check{\nabla}_X Y.$$
Here we have used that $\calP$ is left-invariant, and so if $hY$ is a section of $\calP$, then $\nabla_{hX} hY$ is a section of $\calP$ also.
Consider a vector field $\check{u}_t$ on $N$, a symmetric positive definite map $\check{\Sigma}_t:T_o N \to T_o N$ and a Brownian motion $\check B_t$ in $T_oN$. We define a process $\check x_t$ as a solution of
$$d\check x_t = \cptr_t \check{\Sigma}_t^{1/2} \circ d\check{B}_t + \check{u}_t(\check x_t) \, dt, \qquad \check{x}_o = \check{o}.$$
Without loss of generality, we may assume that $\pi(1_G) = \check{o}$. We can then consider $\pi(x_t) = \check x_t$, with
$$dx_t = x_t \cdot  \Sigma_t^{1/2} \circ dB_t + h\check{u}_t(x_t) \, dt, \qquad x_0 = 1_G,$$
where $\Sigma_t$ is a symmetric map of $\mathfrak{p}$ and $B_t$ is a Brownian motion in $\mathfrak{p}$. Write $\bar{u}_t(y) = y^{-1} \cdot h\check{u}_t(y)$, which satisfies $\bar{u}_t(y \cdot k) = \Ad(k^{-1}) \bar{u}_t(y)$. Then the development most probable paths in $N$ are projections of
\begin{equation} \label{HomoGenMPP} \left\{ \begin{aligned}
  \dot \gamma_t & =\gamma_t \cdot z_t, \qquad \gamma_0 =1\\
  z_t &= \Sigma_t \sharp^g \alpha_t + \bar{u}_t(\gamma_t), \\
  \dot \alpha_t & = (\ad(z_t))^*\alpha_t, \qquad \alpha_0|_\mathfrak{k} =0.
\end{aligned} \right.
\end{equation}
\end{example}

\begin{example}
Consider now the case when $\tilde \nabla$ is a right invariant connection and define $(E,g)$ by right translation of a subspace $\mathfrak{e} \subseteq \mathfrak{p}$ and its inner product. We assume that there is a subbundle $\check E$ on $N$ such that $\pi_* E = \check E$ is well-defined and bijective on every fiber. This happens if $\Ad(x) \mathfrak{e}$ is transverse for $\mathfrak{k}$ for any $x$ in $G$. If this assumption holds and if $A$ is a right invariant vector fields with values in~$E$, then $\pi_* A$ is a non-vanishing vector field on $N$.

Let $a_t$ be a curve in $\mathfrak{g}$. Define $u_t$ as the vector field on $G$ obtained by right translation of $a_t$ and define $\check u_t = \pi_* u_t$ which is well defined at every point. Consider the stochastic process
$$dx_t = {\tptr}_t \Sigma_t^{1/2} \circ dB_t + u_t(x_t) \, dt = (\Sigma_t^{1/2} \circ dB_t + a_t(x_t) dt) \cdot x_t, \qquad x_0 = 1_G .$$
Let $\nabla$ be a connection on $N$ such that $\pi_* A$ is parallel for any right-invariant vector field. Then $\check{x}_t = \pi(x_t)$ is the solution of
$$d\check x_t = \ptr_t \check \Sigma_t^{1/2} \circ d\check B_t + \check u_t(x_t) \, dt,$$
where $\check B_t = \pi_{*,o} B_t$ is a Brownian motion in $\check E_{\check o}$ and with $\check \Sigma_t \pi_{*,o} = \pi_{*,o} \Sigma_t$. Singular curves and development most probable paths related to $\check x_t$ are then given by solutions of respectively \eqref{LieSingular} and \eqref{LieMPP} with $\alpha$ vanishing on $\mathfrak{k}$.
\end{example}

\section{Sobolev diffeomorphism and landmarks}
\label{sec:diffeos}
We here cover development most probable paths on the diffeomorphism group, as well as in the space of landmark configurations, where we have an action by diffeomorphisms. The setting includes Kunita stochastic differential equations \cite{kunitastochasticflowsstochastic1997} with finite-dimensional driving noise. Stochastics for such systems are studied and used in fields including fluid dynamics and shape analysis, e.g., for modelling stochastic deformations of shape occurring in evolutionary biology. One example of a particular stochastic system of this form are the perturbed Hamiltonian equations of \cite{ACC14}. Other examples of related shape stochastics from action of stochastic diffeomorphisms include \cite{arnaudonGeometricFrameworkStochastic2019}.

\subsection{Sobolev diffeomorphisms} \label{sec:SobDiffeo}
We will now introduce metrics on Sobolev diffeomorphisms, see e.g., \cite{bruveris2017completeness,IKT16} for more details. 
Let $(M,g)$ be a $d$-dimensional Riemannian manifold. Let $\mathfrak{g}$ be the infinite dimensional Lie algebra consisting of $H^s$-vector fields on $M$ for $s > \frac{d}{2} +1$. For $r \leq s$, define an inner product on on $\mathfrak{g}$ by
$$\langle u, v \rangle_{H^r} = \int_M \langle u, Lv \rangle_g \, d\mu,$$
where $L$ is a positive, invertible, elliptic differential operator of order $2r$.

Let $\sigma_1, \dots, \sigma_J$ be a collection of $J$ vector fields and let $a_t$ be a time-dependent vector field. Define $\mathfrak{e} = \spn \{ \sigma_1, \dots, \sigma_J\}$, and assume for the sake of simplicity that these vector fields are all linearly independent. We emphasize that this linear independence holds for the collection as vector fields, but not necessarily pointwise. Let $B_t$ and $\hat B_t$ be Brownian motions in respectively $\mathfrak{e}$ and $\mathbb{R}^J$, and let $\Sigma: \mathfrak{e} \to \mathfrak{e}$ be the symmetric operator with
\begin{equation}
    \label{SigmaDiff} \Sigma v= \sum_{j=1}^J \langle \sigma_j , v \rangle_{H^r} \sigma_j.
\end{equation} 
We define
\begin{equation} \label{VectorFieldZt} dZ_t = a_t dt + \sum_{j=1}^J \sigma_j \circ d\hat B_t^j  = a_t dt +  \Sigma^{1/2} \circ dB_t.\end{equation}
Let $G= \mathcal{D}^s(M)$ be the group of all $C^1$-diffeomorphisms with $H^s$-Sobolev regularity. Let $\tilde \nabla$ be a right invariant connection on $G$, and define development with respect to this connection. If $(E,g)$ is the right translation of $\mathfrak{e}$ and its inner product, then~$\tilde \nabla$ is compatible with the rank $d$ sub-Riemannian structure $(E,g)$ on the infinite dimensional group $G$.

Write $x_t$ for the development of $Z_t$, that is, the solution of
\begin{equation} \label{DevOfZt}
dx_t = \circ dZ_t(x_t).
\end{equation}
Then we still have that singular curves are given by \eqref{LieSingular} and the most probable paths that are not singular are given by \eqref{LieMPP}. However, it is more complicated to verify which curves are non-singular when our ambient space is infinite dimensional. We therefore want to look at projections of this process on a finite dimensional space of landmarks in the next section.

\begin{remark}
If $a_t$, $\sigma_1$, $\dots$, $\sigma_J$, are as described above, we can consider the stochastic process on $M$
\begin{equation} \label{SDExt} dx_t = a_t(x_t) dt + \sum_{j=1}^J \sigma_j(x_t)\circ d\hat B_t^j, \qquad x_0 = \id_M,\end{equation}
which can be considered as the development of $Z_t$ in \eqref{VectorFieldZt}.
If the vector fields $\sigma_1, \dots, \sigma_J$ span $TM$, we can consider \emph{most probable transformation}, which was done in \cite[Theorem~3.1]{GrSo22}, where we are considering a curve $t \mapsto \phi_t$ in the diffeomorphism group such that each path $t \mapsto \phi_t(x)$ is most probable. We remark that the setting of \cite{GrSo22} is quite different in two ways; firstly, it does not consider that the curve $t \mapsto \phi_t$ itself is most probable, but rather its flow lines, and, secondly, it induces a Riemannian structure on $M$ from $\sigma_1, \dots, \sigma_J$ and writes the Onsager-Machlup function for the flow lines in terms of this metric.
\label{rem:GrSo22}
\end{remark}

\subsection{Applications to landmarks}
Recall that $(M,g)$ is a Riemannian $d$-dimensional manifold. For a given integer $n \geq 1$, let us define the landmark manifold
$$\calM= \{ \mathbf{x} = (x_1, \dots, x_n) \in M^{\times n} \, : \, x_i \neq x_j\},$$
which makes $\calM$ a manifold of dimension $nd$. We have an action of $G = \mathcal{D}^s$ on $\calM$ by $\phi \cdot (x_1, \dots, x_n) = (\phi(x_1) , \dots, \phi(x_n))$. Let $\mathbf{o} = (o_1, \dots, o_n) \in \calM$ be a collection of reference points, and  define $\bpi: G \to \calM$ by
$$\bpi(\phi) = \phi \cdot \mathbf{o}.$$
Define a Riemannian metric $\mathbf{g}$ on $\calM$ as the product metric induced by the Riemannian metric $g$ on $M$.

We introduce the following notation. For the time-dependent vector field $a_t$, we let $\hat a_t$ be the corresponding right invariant vector field on $G$, and define $\mathbf{a}_t$ as the vector field on $\calM$ such that
$$\mathbf{a}_t(\mathbf{x}) = (a_t(x_1), \dots, a_t(x_n)).$$
We then note that
$$\bpi_{*} \hat a_t(\phi) = \mathbf{a}_t(\phi \cdot \mathbf{o}) = \mathbf{a}_t(\bpi(\phi)).$$
In other words, $\hat a_t$ and $\mathbf{a}_t$ are $\bpi$-related. This holds for all vector fields on $M$ with similar notation.

Recall from Section~\ref{sec:SobDiffeo} that $\mathfrak{e} = \spn\{ \sigma_1, \dots, \sigma_J\}$ is the $J$-dimensional span of vector fields. Also recall that $E_{\phi} = \spn \{ \hat \sigma_1(\phi), \dots, \hat \sigma_J(\phi)\}$ was the $J$-dimensional subbundle on the infinite dimensional group $G = \mathcal{D}^s(M)$ by right translation, with $\hat \sigma_j$ being the right translation of $\sigma_j$. Introduce the following assumption.
\begin{equation} \tag{A1} \label{A1}
\begin{array}{c}
\text{The number of points that any vector field} \\
\text{in $\mathfrak{e}$ vanish at is at  most on $n-1$.}
\end{array}
\end{equation}
From assumption \eqref{A1}, it follows that $\bpi_* E = \calE$ is a rank $J$ vector bundle on $\calM$ with
$$\calE_{\mathbf{x}} =\spn\{ \bsigma_1(\mathbf{x}), \dots, \bsigma_J(\mathbf{x})\}, \qquad \bsigma_j(x) = (\sigma(x_1), \dots, \sigma(x_n)).$$
We remark that $J$ may in general be less than $nd$, making $\calE$ a proper subbundle of $T\calM$. We define sub-Riemannian metrics $\langle \cdot , \cdot \rangle_\rho$ and $\langle \cdot , \cdot \rangle_{\brho}$ on respectively $E$ and $\calE$ by
$$\langle \hat \sigma_i, \hat \sigma_j \rangle_\rho(\varphi) = \langle \bsigma_i, \bsigma_j \rangle_{\brho} (\bpi(\varphi)) = \langle \sigma_i, \sigma_j \rangle_{H^r}, \qquad
\text{for any $\varphi \in G$}.$$
If $x_t$ is the solution equations \eqref{VectorFieldZt} and \eqref{DevOfZt} on $G$, and $\bpi(x_t) = \mathbf{x}_t$, then $\mathbf{x}_t$ is the solutions
$$d\mathbf{x}_t = \sum_{j=1}^n \bsigma_j(\mathbf{x}_t) \circ d\hat B_t^j + \mathbf{a}_t(\mathbf{x}_t) \, dt,$$
with $\hat B_t = (\hat B_t^1, \dots, \hat B_t^J)$ being a standard Brownian motion.

Let $\nabla$ be a connection on $\calM$ such that $\nabla \bsigma_j =0$. Introduce coefficients
$$\Sigma_{ij} = \langle \bsigma_i, \bsigma_j \rangle_{H_r},$$
and symmetric operator $\bSigma: \calE_{\mathbf{o}} \to \calE_{\mathbf{o}}$ by $\bSigma \bsigma_j = \sum_{i=1}^n \Sigma_{ij} \bsigma_i$. We can then rewrite the equation for $\mathbf{x}_t$ as
$$d\mathbf{x}_t =\ptr_t \bSigma^{1/2} \circ d\mathbf{B}_t + \mathbf{a}_t(\mathbf{x}_t) \, dt,$$
with $\mathbf{B}_t$ being a Brownian in $(\calE_o, \langle \cdot , \cdot \rangle_{\brho})$. We then have the following result.

For any element $w \in T_xM$, $x\in M$, define $Y_w \in \mathfrak{g}$ by
$$\langle Y_w,Y \rangle_{H^r} = \langle w, Y(x)\rangle, \qquad Y \in \mathfrak{g} .$$
In other words $Y_w = L^{-1} \delta_x \otimes w$. If $\mathbf{w} = (w_1,\dots, w_n) \in T_{x_1} M \times \cdots \times T_{x_n}M$, we can define $Y_{\mathbf{w}} = L^{-1} \left(\sum_{j=1}^J \delta_{x_i} \otimes w_i \right)$. With this notation in place, we note that
$$\mathfrak{p} = \Big\{ Y_\mathbf{w} \, : \,  \mathbf{w} = (w_1,\dots, w_n) \in  T_{\mathbf{o}} \calM \Big\}$$
is the orthogonal complement to $\mathfrak{k} = \mathfrak{p}^\perp = \ker \bpi_{*,\id}$. We see that $\mathfrak{k}$ is the subalgebra of vector fields vanishing at $o_1, \dots, o_n$.

Let $\ad(A_1)^\dagger$ be the adjoint of the map $A_2 \mapsto \ad(A_1) A_2 = - [A_1, A_2]$ with respect to the inner product $\langle \cdot , \cdot \rangle_{H^r}$. Applying the results in Section~\ref{sec:ExHom} the setting of landmarks seen as a homogeneous space with a transitive action of diffeomorphisms, we have the following result.
\begin{theorem} Let
$$z_t = \sum_{j=1}^J c_{j,t} \sigma_{j} +a_t,$$ be a time-dependent vector field on $M$, and define a curve $\bgamma_t = (\gamma_{1,t}, \dots, \gamma_{n,t})$ along its directions by
    $$\dot \gamma_{r,t} = z_t(\gamma_{r,t}), \qquad \gamma_{r,0} = o_r, \qquad r=1, \dots, n.$$
    Let $A_t \in \mathfrak{g}$ be some solution of
    \begin{equation}
        \label{LandmarkUp} \dot A_t = - \ad(z_t)^\dagger A_t, \qquad A_0 = Y_{\mathbf{w}}, \mathbf{w} \in T_{\mathbf{o}}\calM.
    \end{equation}
\begin{enumerate}[\rm (a)]
    \item $\bgamma$ is singular if and only if there is a solution $A_t$ of~\eqref{LandmarkUp} satisfying
    $$\langle A_t, \sigma_j \rangle_{H_r} =0, \qquad j=1, \dots, J.$$
    \item If $\bgamma$ is a most probable path and not singular, then there is a solution $A_t$ of \eqref{LandmarkUp} with
    $$\langle A_t, \sigma_j \rangle_{H_r} =c_{t,r}, \qquad j=1, \dots, J.$$
\end{enumerate}
\end{theorem}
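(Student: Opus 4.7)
The plan is to reduce the landmark problem to the right-invariant Lie group case of Section~\ref{sec:Lie} via the submersion $\bpi: G \to \calM$, $\phi \mapsto \phi \cdot \mathbf{o}$, and then translate forms to vector fields through Riesz representation. First I will verify the hypotheses of Propositions~\ref{prop:Lifted1} and \ref{prop:Lifted2} in this infinite-dimensional setting: the map $\bpi$ is a surjective submersion because $H^s$-regularity with $s > d/2+1$ lets one interpolate prescribed tangent vectors at any finite set of points; right translation identifies $\mathfrak{e}$ with $E_\phi$ and assumption~\eqref{A1} forces $E_\phi \cap \calV_\phi = 0$ at every $\phi$, so $\calH = E$ maps isomorphically to $\calE$ under $\bpi_*$, with metrics matching by construction; the right-invariant connection $\tilde\nabla$ on $G$ renders the frame $\hat\sigma_j$ parallel and is therefore the pull-back of a connection $\nabla$ on $\calM$ with $\nabla\bsigma_j = 0$; and $u_t = \hat a_t$ is $\bpi$-related to $\mathbf{a}_t$. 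Remark~\ref{re:CanTakeH} then lets me ignore the $E \cap \calV$ issue entirely.

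With the hypotheses met, Propositions~\ref{prop:Lifted1} and~\ref{prop:Lifted2} state that $\bgamma$ is a non-singular most probable path if and only if it is the $\bpi$-projection of a solution to \eqref{LieMPP} with $\alpha_0|_{\mathfrak{k}} = 0$, and $\bgamma$ is singular if and only if it is the projection of a solution to \eqref{LieSingular} with $\alpha_0|_{\mathfrak{e}+\mathfrak{k}} = 0$. Introduce $A_t \in \mathfrak{g}$ by the Riesz representation $\alpha_t(v) = \langle A_t, v\rangle_{H^r}$. Then the coadjoint ODE $\dot\alpha_t = -(\ad z_t)^* \alpha_t$ is equivalent to $\dot A_t = -\ad(z_t)^\dagger A_t$, which is exactly \eqref{LandmarkUp}, while the initial condition $\alpha_0|_{\mathfrak{k}} = 0$ becomes $A_0 \in \mathfrak{p} = \{Y_\mathbf{w} : \mathbf{w} \in T_{\mathbf{o}}\calM\}$ using the identification $\mathfrak{p} = \mathfrak{k}^\perp$ provided in the text.

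To finish, I unpack the conditions involving $\sharp^g$. Since $\mathfrak{e}$ inherits its inner product from $\langle\cdot,\cdot\rangle_{H^r}$, the vector $\sharp^g\alpha_t \in \mathfrak{e}$ is the $\langle\cdot,\cdot\rangle_{H^r}$-orthogonal projection of $A_t$ onto $\mathfrak{e}$, so $\langle A_t,\sigma_j\rangle_{H^r} = \langle \sharp^g\alpha_t,\sigma_j\rangle_{H^r}$ for every $j$. In the singular case, $\sharp^g\alpha_t = 0$ immediately gives $\langle A_t,\sigma_j\rangle_{H^r} = 0$, proving~(a). For~(b), the relation $\Sigma\,\sharp^g\alpha_t = z_t - a_t = \sum_j c_{j,t}\sigma_j$ together with the definition $\Sigma v = \sum_j \langle \sigma_j,v\rangle_{H^r}\sigma_j$ forces $c_{j,t} = \langle \sharp^g\alpha_t,\sigma_j\rangle_{H^r} = \langle A_t,\sigma_j\rangle_{H^r}$, which is the claim. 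The main technical hurdle is confirming that the submersion machinery of Section~\ref{sec:submersions}, formulated for finite-dimensional manifolds, transfers cleanly to the Hilbert Lie group $\mathcal{D}^s(M)$: once smoothness of $\bpi$, well-definedness of the pull-back right-invariant connection, and integrability of horizontal lifts along Cameron-Martin curves are established, the remaining coadjoint algebra is essentially formal.
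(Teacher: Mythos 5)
Your proposal is correct and follows essentially the same route as the paper's own proof: lift through $\bpi$ to the right-invariant equations \eqref{LieSingular} and \eqref{LieMPP} with $\alpha_0$ vanishing on $\mathfrak{k}$, pass to $A_t \in \mathfrak{g}$ via Riesz representation so that the coadjoint equation becomes \eqref{LandmarkUp} with $A_0 \in \mathfrak{p}$, and read off (a) from $\sharp^g\alpha_t = 0$ and (b) from $\Sigma\,\sharp^g\alpha_t = z_t - a_t$ together with the linear independence of the $\sigma_j$. Your additional verification of the submersion hypotheses (using \eqref{A1} to get $E \cap \calV = 0$) is a more explicit account of what the paper leaves implicit, not a different argument.
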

In other words, if we extend the definition of $\Sigma$ from \eqref{SigmaDiff} to be valid for arbitrary vector fields, then for most probable paths, we want to solve
$$\dot A_t = - \ad(\Sigma A_t + a_t)^\dagger A_t, \qquad A_0 = Y_{\mathbf{w}}, \mathbf{w} \in T_{\mathbf{o}}\calM.$$

\begin{proof}

Let $(\gamma_t, \alpha_t)$ be a solution of either \eqref{LieSingular} or \eqref{LieMPP} such that $\alpha_0$ vanishes on $\mathfrak{k}$, then by the Riesz representation theorem, there is a vector field $A_t \in \mathfrak{g}$, such that $A_0 \in \mathfrak{p}$. The result for singular curves now follows.

Finally, if $\bgamma$ is not singular and a dMPP, then
$$\Sigma \sharp^\rho \alpha_t = \sum_{j=1}^J \langle A_t , \sigma_j \rangle_{H^r} \sigma_j = z- a_t = \sum_{j=1}^J c_{j,t} \sigma_{j},$$
which gives us the result.
\end{proof}

We can also compute these vector fields from the point of view of $\calM$.
\begin{theorem} \label{th:LMDown}
Let $\mathbf{o} =(o_1,\dots, o_n) \in \calM$ be a given point in the landmark manifold. For a given Brownian motion $\hat B_t$ in $\mathbb{R}^J$ and vector fields $\sigma_{1}, \dots, \sigma_J, a_t$, define a stochastic process $\mathbf{x}_t = (x_{1,t}, \dots, x_{n,t})$ by $dx_t = \sum_{j=1}^J \bsigma_j(\mathbf{x}_t) \circ d\hat B^j + \mathbf{a}_t dt$, i.e.,
$$dx_{i,t} = \sum_{j=1}^J \sigma(x_{i,t}) \circ d\hat B^j + a_t(x_{i,t}) dt, \qquad i =1, \dots, n.$$
Let $\bgamma_{t} = (\gamma_{1,t}, \dots, \gamma_{n,t})$ be a curve in $\calM$ starting at $\mathbf{o}$ with $\dot \bgamma_t = \mathbf{z}_t(\bgamma_t)$, for some time-dependent vector field $z_t = \sum_{j=1}^J c_{j,t} \sigma_j + a_t$.
Along each curve $\gamma_{r,t}$, $r=1,\dots, n$, let $\lambda_{r,t}$ be a form satisfying
\begin{equation}
    \label{LandmarkDown}\frac{d}{dt} \lambda_{r,t}(W) - \lambda_{r,t}([z_t, W]) = 0, \qquad \text{for any $W \in \Gamma(TM)$.}
\end{equation}
\begin{enumerate}[\rm (a)]
    \item The curve $\bgamma_t$ is singular if and only if there are solutions of \eqref{LandmarkDown} such that 
    $$\sum_{r=1}^n \lambda_{r,t}(\sigma_j(\gamma_{r,t})) =0. \qquad \text{for any $j =1, \dots, J$.}$$
    \item If the curve $\bgamma_t$ is not singular and a developed most probable path then there are solutions of \eqref{LandmarkDown} such that for any $j=1, \dots,J$,
    $$\sum_{r=1}^n \lambda_{r,t}(\sigma_j(\gamma_{r,t})) = c_{j,t} $$
\end{enumerate}
\end{theorem}
We remark that because our connection here is flat, the above solutions will be usual sub-Riemannian geodesics in $\calM$ as explained in Section~\ref{sec:RSR}. Also note that even if $\sigma_1, \dots, \sigma_J$ span the entire tangent bundle $TM$, the problem is still sub-Riemannian when $J < nd = \dim \calM$.
\begin{proof}
Let $\bgamma_t$ be a curve in $\calM$ starting at $\mathbf{o}$ and let $\blambda_t$ be a one-form along this curve. Since the connection $\nabla$ is flat on $E$, we have $K =0$. Hence, both for singular curves and most probable paths, we need only consider the equation
$$\frac{D}{dt} \blambda_t + \blambda_t(\nabla \mathbf{a}_t) + \blambda_t(T(\dot \bgamma, \cdot )) =0.$$
We write
$$\dot \bgamma_t = \mathbf{z}_t(\bgamma_t)  = \sum_{j=1}^J c_{j,t} \bsigma_t(\bgamma_t) + \mathbf{a}_t(\bgamma_t).$$

Assume that locally, $\dot \bgamma_t$ is represented as $\dot \bgamma_t = \mathbf{z}_t(\bgamma_t) $. We then get for any vector field $\mathbf{W}$ on $M$, we have that
\begin{align*}
   &  \left(\frac{D}{dt} \blambda_t\right)(\mathbf{W}) + \blambda_t(\nabla_{\mathbf{W}} \mathbf{a}_t) + \blambda_t(T(\dot \bgamma, \mathbf{W} )) \\
   & =  \frac{d}{dt} \blambda_t(\mathbf{W})  - \blambda_t(\nabla_{\mathbf{z}_t } \mathbf{W}) + \blambda_t(\nabla_{\mathbf{W}} \mathbf{a}_t) + \blambda_t(\nabla_{\mathbf{z}_t} \mathbf{W} - \nabla_{\mathbf{W}} \mathbf{z}_t - [\mathbf{z}_t , \mathbf{W}] ) \\
   & =  \frac{d}{dt} \blambda_t(\mathbf{W}) - \blambda_t( [\mathbf{z}_t , \mathbf{W}] )
\end{align*}
Focusing on each component, we have the result.
\end{proof}

\section{Numerical experiments}
\label{sec:numerics}
We here numerically integrate and visualize development most probable paths as identified in the previous sections between points on three manifolds: The Lie group $\SO(3)$ with a left-invariant metric, the homogeneous space $\mathbb S^2$, and the LDDMM landmark manifold.\footnote{Figures and computations using Jax Geometry \url{https://bitbucket.org/stefansommer/jaxgeometry/}.}

\subsection{Lie groups and homogeneous spaces}
We illustrate the dynamics on Lie groups and homogeneous spaces using the rotation group $\SO(3)$ and the homogeneous space $\mathbb S^2\simeq \SO(3)/\SO(2)$. The rotations are illustrated by the action on the standard, orthonormal frame $e_1,e_2,e_3\in\mathbb R^3$. The projection of $g\in SO(3)$ to $\mathbb S^2$ is equivalent to the mapping $g\mapsto g.e_1$.

Figure~\ref{fig:SO3} shows most probable paths in $\SO(3)$ equipped with a left-invariant connection. $\Sigma_t=\mathrm{diag}(0.3,2,1)$ in the standard basis for $\mathfrak{so}(3)$, and the drift is constant $a_t=(1,0,0)$. The figure illustrate both the forward solution of the system \eqref{LieMPP} from the identity $e\in\SO(3)$ with specified initial conditions $\alpha_0$, and the boundary value problem of a most probable paths between two points $e,g\in\SO(3)$.
\begin{figure}[ht]
    \centering
    \begin{subfigure}[b]{\textwidth}
        \centering
        \includegraphics[trim=200 50 200 50,clip,width=0.32\linewidth]{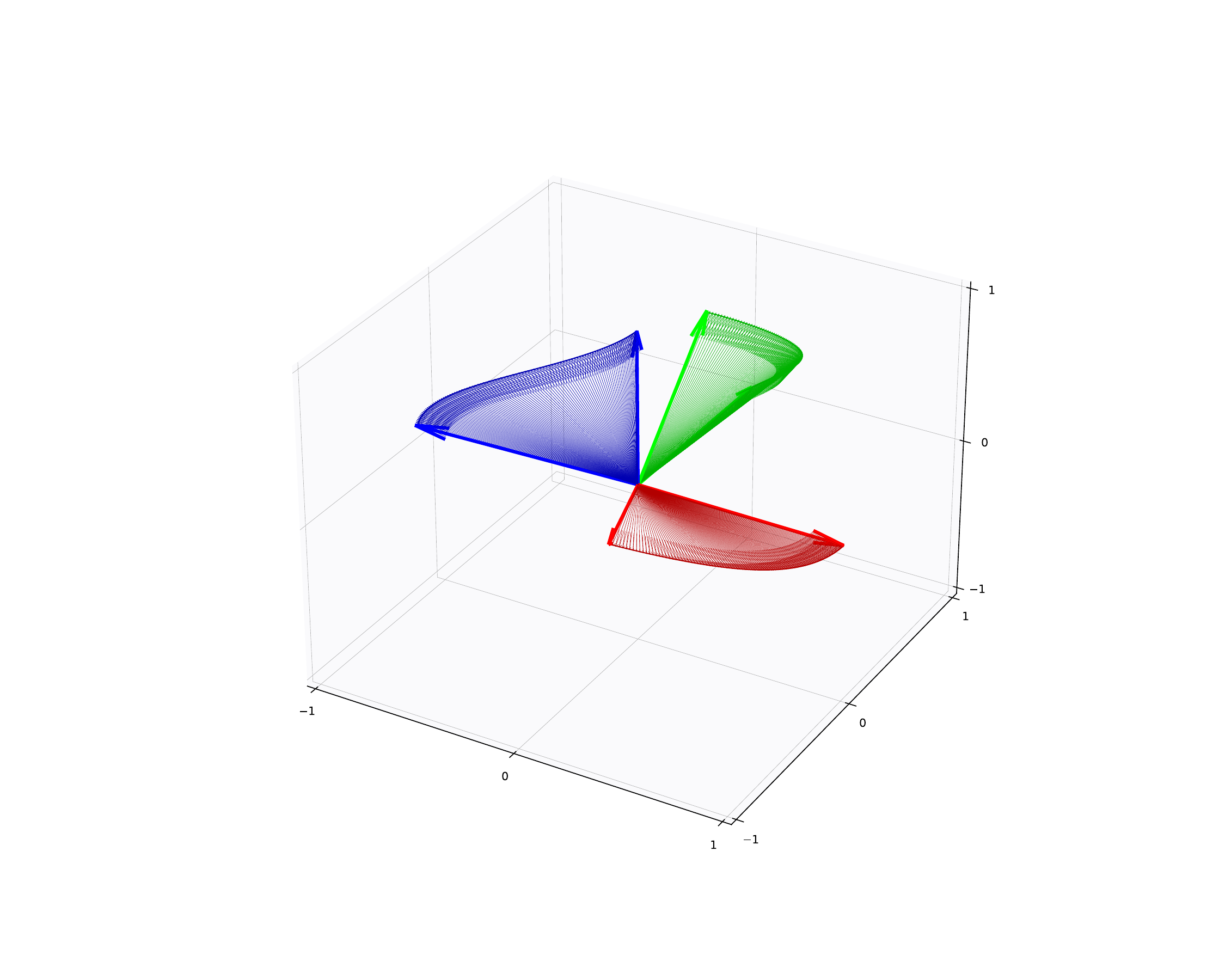}
        \hspace{1cm}
        \includegraphics[trim=200 50 200 50,clip,width=0.32\linewidth]{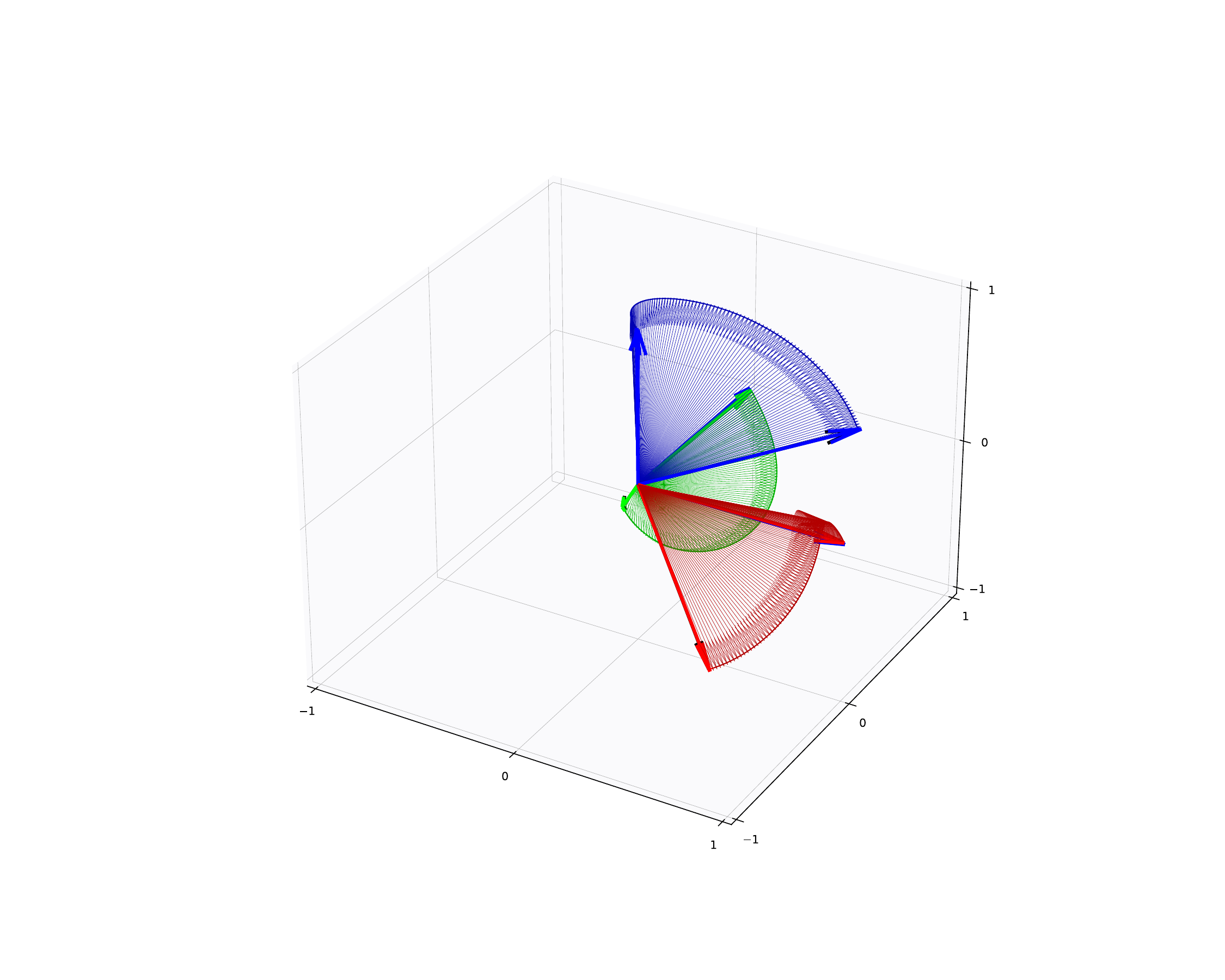}
    \end{subfigure}
    \caption{Examples of most probable paths on $\SO(3)$, illustrated by the action on the standard basis of $\mathbb R^3$ (red, green, blue vectors): (left) Initial value problem of most probable path starting at the identity $e\in\SO(3)$ and evolving from initial conditions $\alpha_0$; (right) solution of boundary value problem of finding most probable paths between two points $e,g\in\SO(3)$.}
    \label{fig:SO3}
\end{figure}
In Figure~\ref{fig:S2}, we find most probable path on the sphere $\mathbb S^2$ as projection of paths on the top space $\SO(3)$ of which $\mathbb S^2$ is a quotient. We set $\Sigma_t=\id_3$ but add non-trivial drift in the form of a vector field on $\mathbb S^2$. The field is horizontally lifted to $\SO(3)$ when solving the most probable paths equations \eqref{HomoGenMPP}. The figure shows the drift field, the solution of a boundary value problem and the corresponding paths on $\SO(3)$.
\begin{figure}[ht]
    \centering
    \begin{subfigure}[b]{\textwidth}
        \centering
        \includegraphics[trim=200 50 200 50,clip,width=0.32\linewidth]{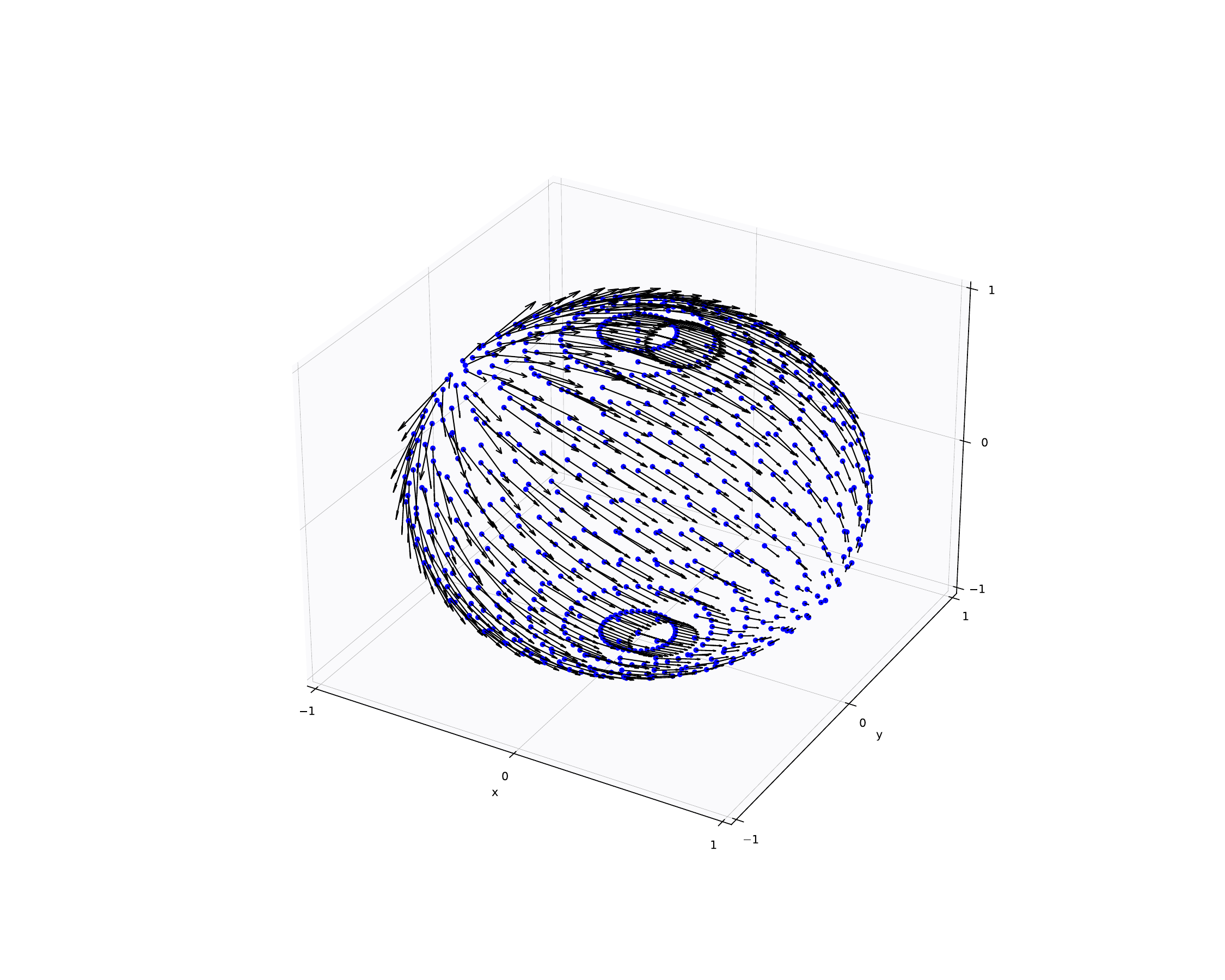}
        \includegraphics[trim=200 50 200 50,clip,width=0.32\linewidth]{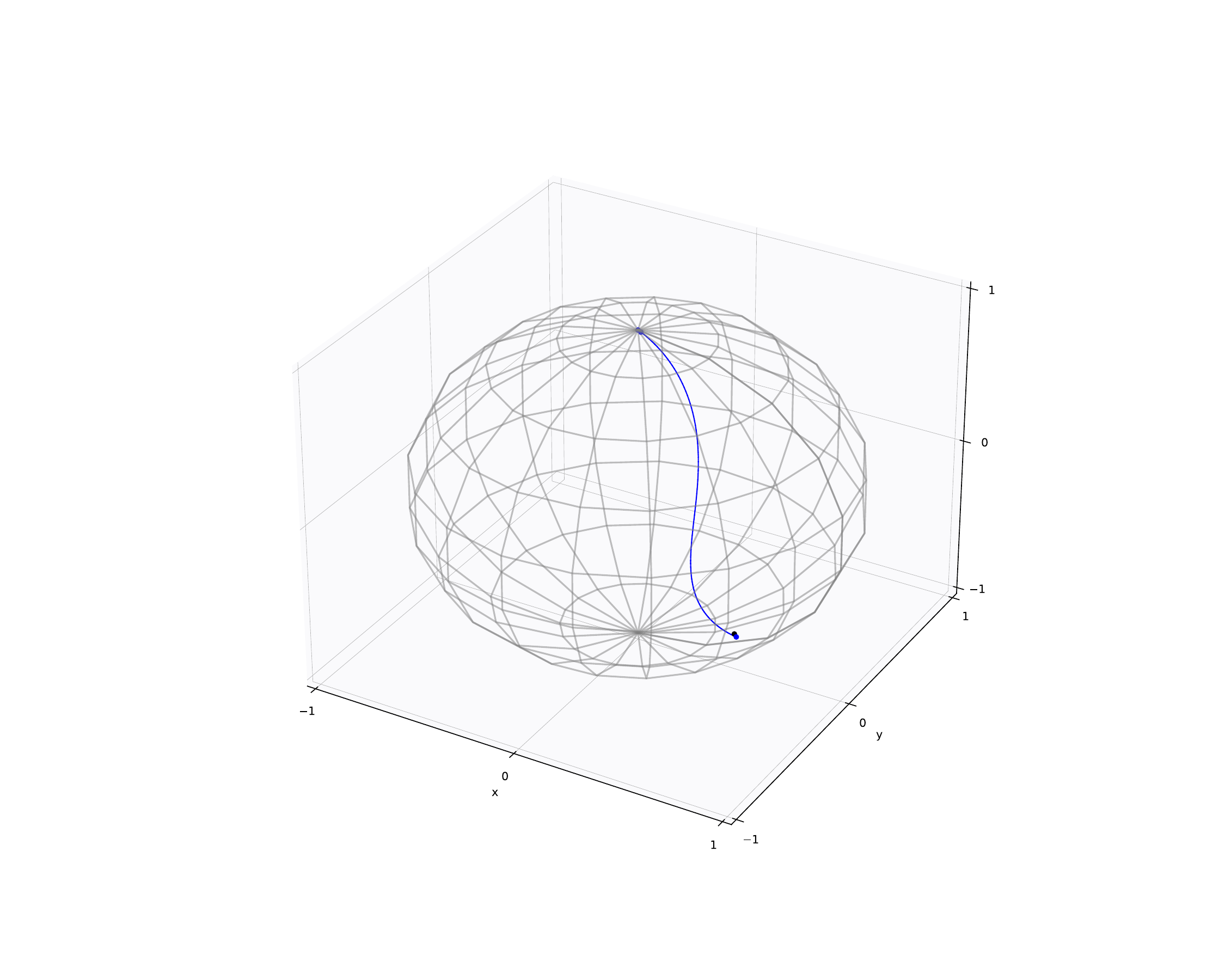}
        \includegraphics[trim=200 50 200 50,clip,width=0.32\linewidth]{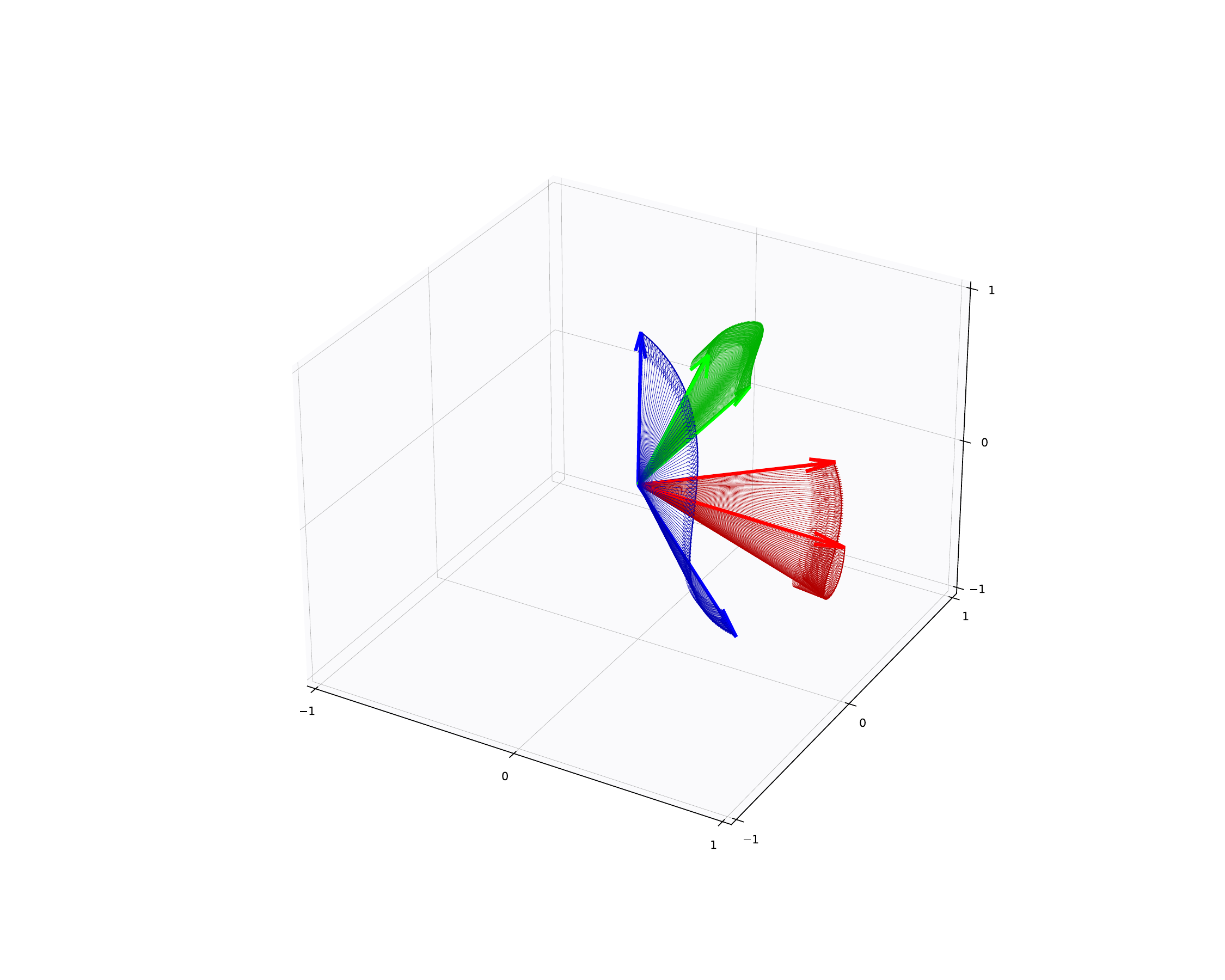}
    \end{subfigure}
    \caption{Examples of most probable paths on $\mathbb S^2$: (left) Drift field $a$ on $\mathbb S^2$ (scaled arrows); (center) solution of boundary value problem of finding most probable paths between two points on $\mathbb S^2$; (right) path in $\SO(3)$ generating the most probable paths between the points on $\mathbb S^2$.
    }
    \label{fig:S2}
\end{figure}

\subsection{Landmark manifolds}
We here illustrate most probable paths of diffeomorphisms acting on landmark configurations. We visualize the underlying deformations by their action on the initial landmark configuration for each $t$. The evolution is governed by equations of Theorem \ref{th:LMDown}, given in coordinates in appendix \ref{sec:landmarks_coordinates}. Figure~\ref{fig:landmarks}, left column shows the action on the landmark configuration of a most probable path of diffeomorphism, while, for comparison, Figure~\ref{fig:landmarks}, right column shows most probable transformation between the same configurations following \cite{GrSo22}. The results are shown for two different cases of noise fields (see figure caption). As discussed in remark~\ref{rem:GrSo22}, the most probable paths are paths of diffeomorphisms while the most probable transformations results in the flow lines of the landmarks being most probable.
\begin{figure}[ht!]
    \centering
    \begin{subfigure}[b]{\textwidth}
        \centering
        \includegraphics[trim=100 50 100 50,clip,width=0.4\linewidth]{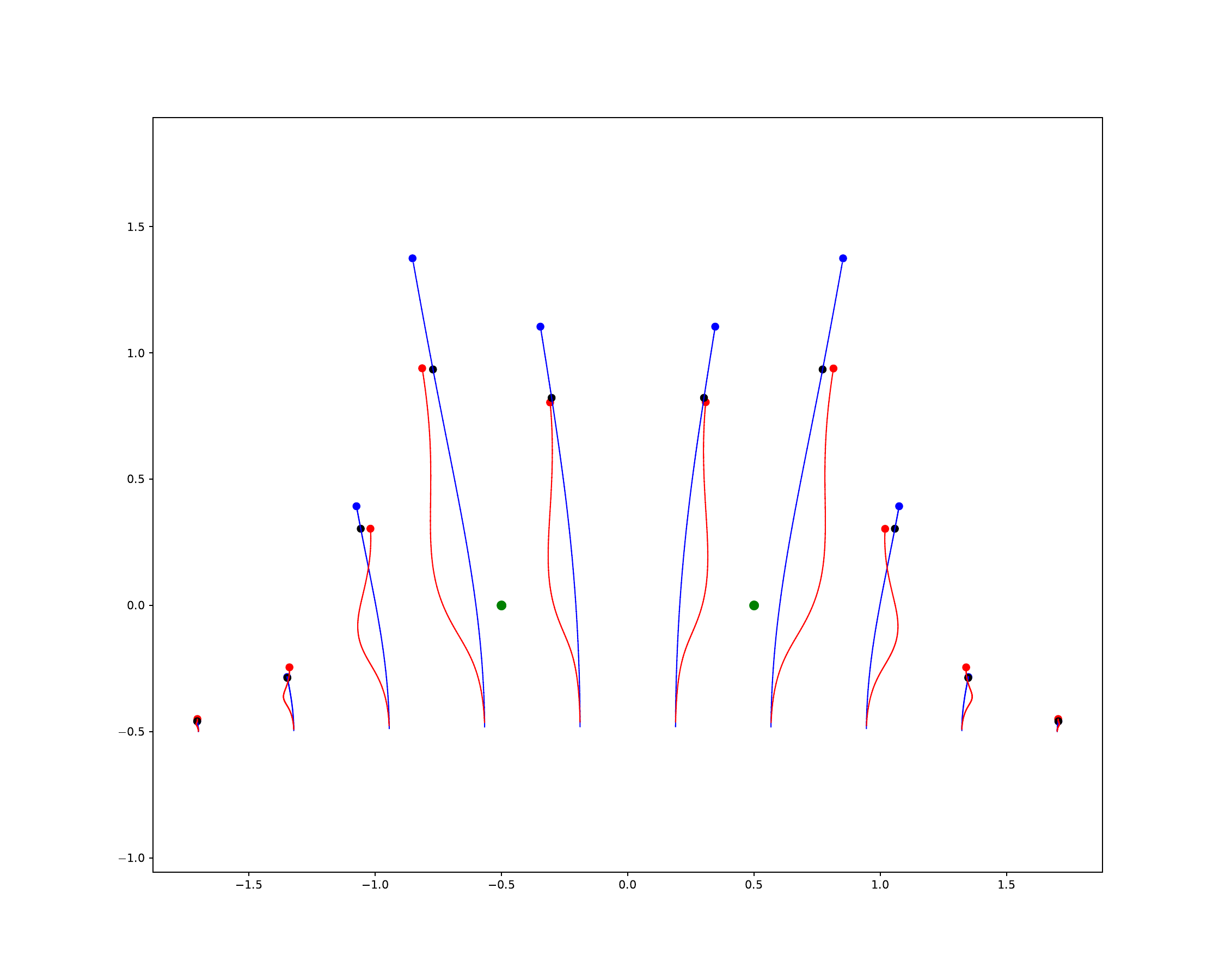}
        \hspace{1cm}
        \includegraphics[trim=100 50 100 50,clip,width=0.4\linewidth]{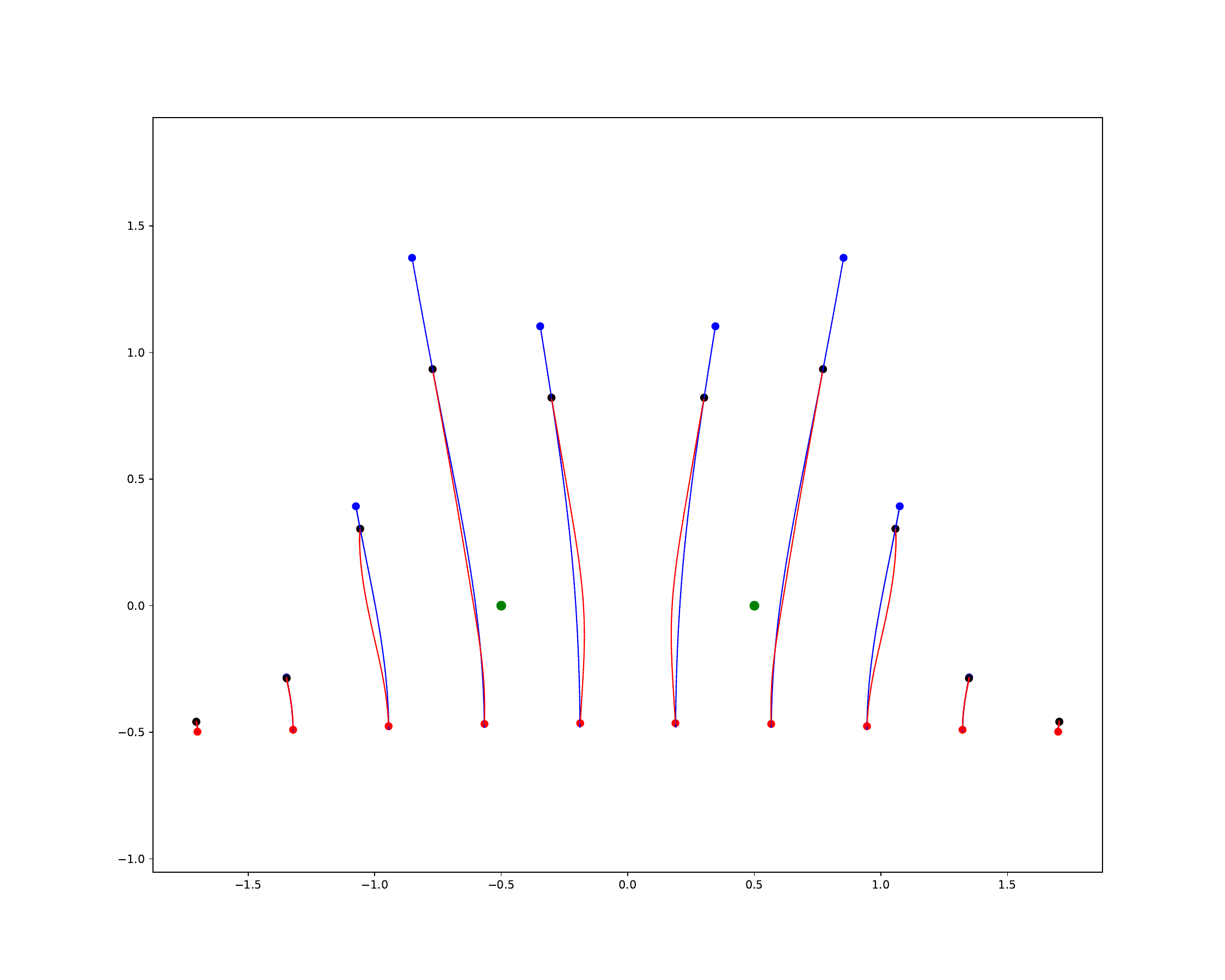}
        \\
        \includegraphics[trim=100 50 100 50,clip,width=0.4\linewidth]{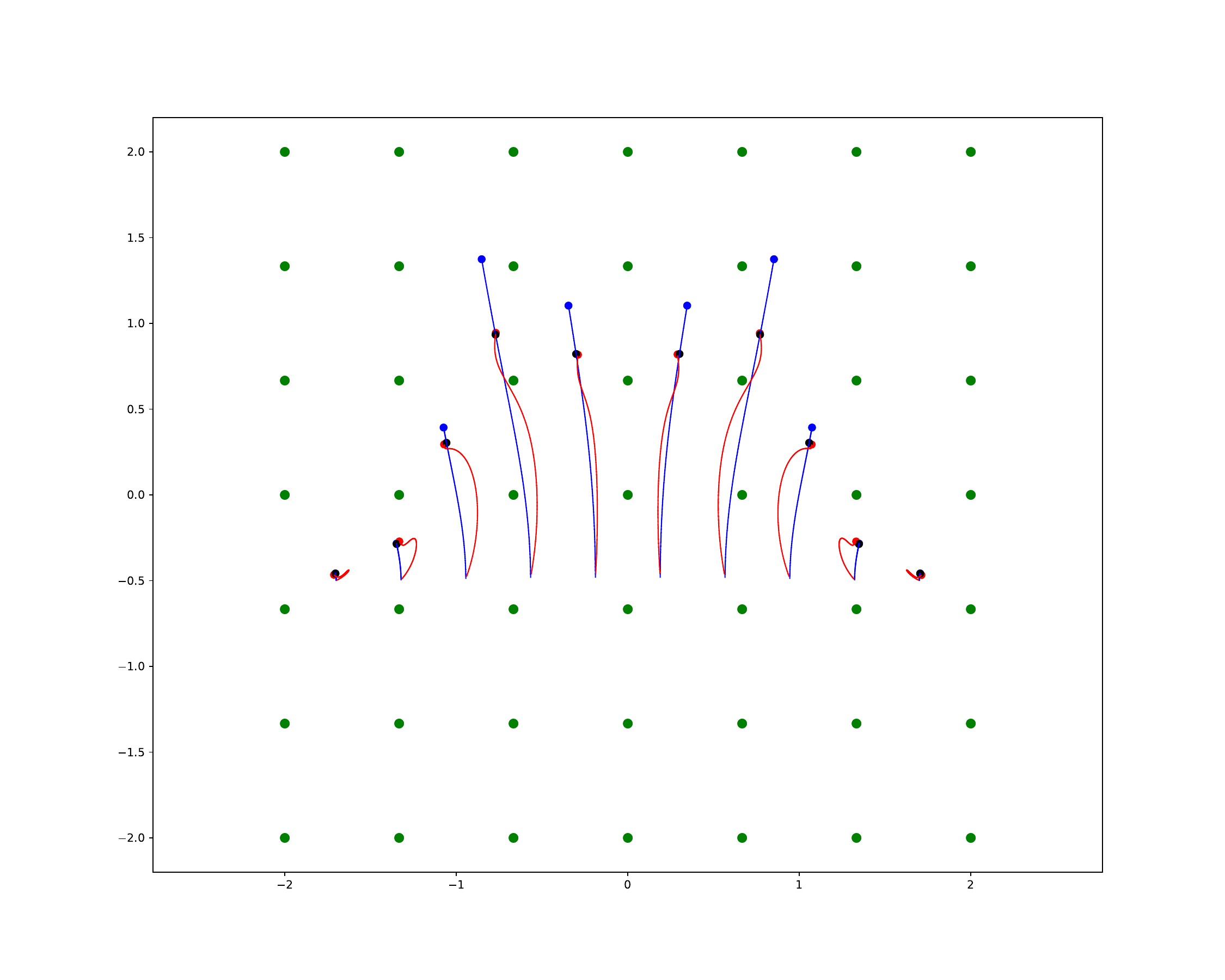}
        \hspace{1cm}
        \includegraphics[trim=100 50 100 50,clip,width=0.4\linewidth]{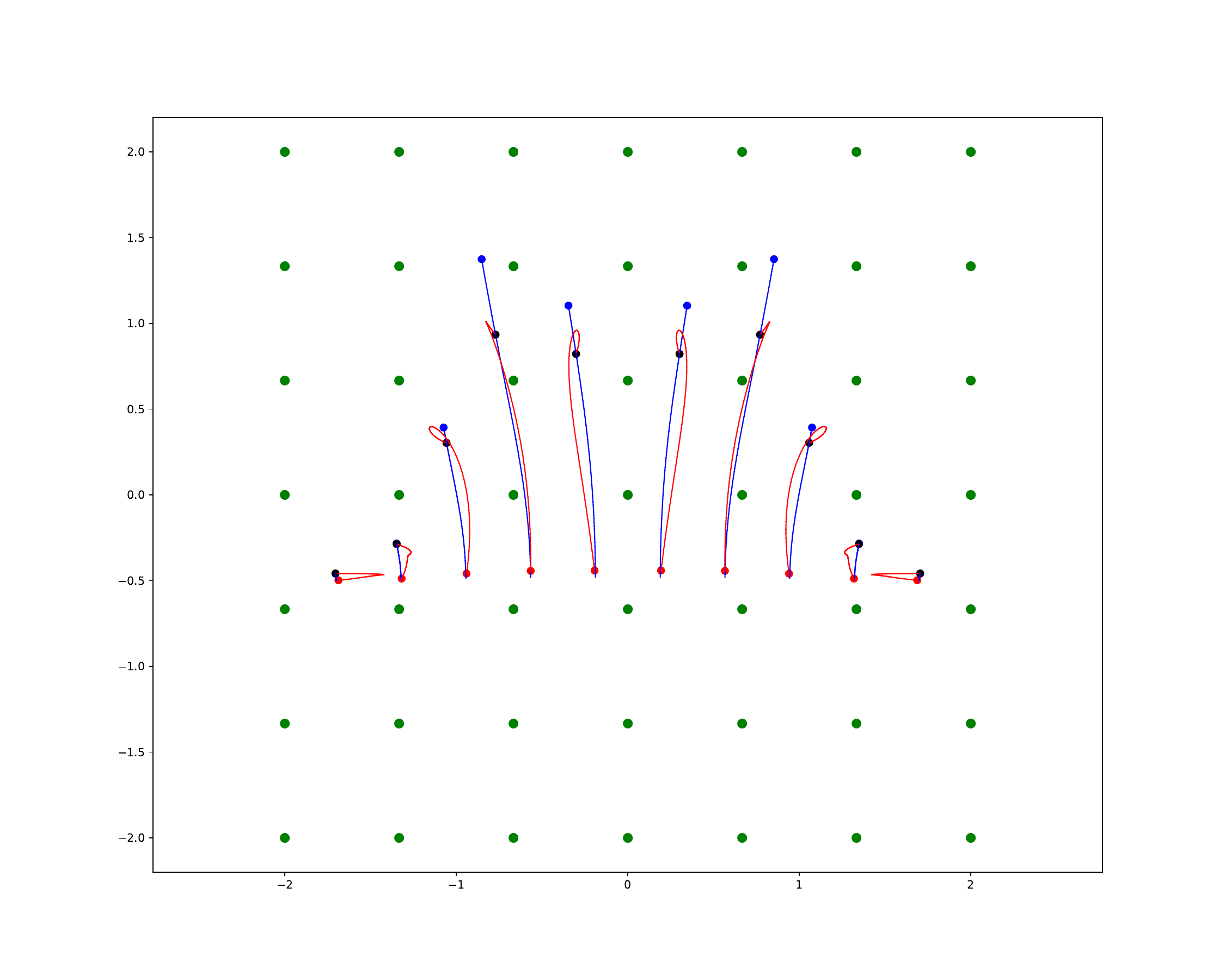}
    \end{subfigure}
    \caption{Most probable paths between two landmark configuration. Blue curves: Trajectories with no noise, only deterministic drift $a_t$. Red curves: Most probable paths to target configuration (black landmarks). Left column: Development most probable path as identified in Theorem~\ref{th:LMDown}. Right column: Most probable transformation \cite{GrSo22} that results in most probable flow lines. Noise fields $\sigma$ indicated by green dots. Top row: two noise fields. Bottom row: 7x7 grid of noise fields.}
    \label{fig:landmarks}
\end{figure}
In Figure~\ref{fig:landmarks2}, we perform the same experiments but now with configurations with 128 landmarks and $13\times 13$ grid of noise fields.
In both figures, particularly Figure~\ref{fig:landmarks2}, we see that the development most probable paths make very regular transformations between the landmark configurations, while the most probable transformations results in landmark trajectories with higher curvature, and the landmarks tend to overshoot the target requiring a hard turn to hit the target. However, it is hard to give general qualitative descriptions of the differences between most probable paths and most probable transformations.
\begin{figure}[ht!]
    \centering
    \begin{subfigure}[b]{\textwidth}
        \centering
        \includegraphics[trim=100 50 100 50,clip,width=0.45\linewidth]{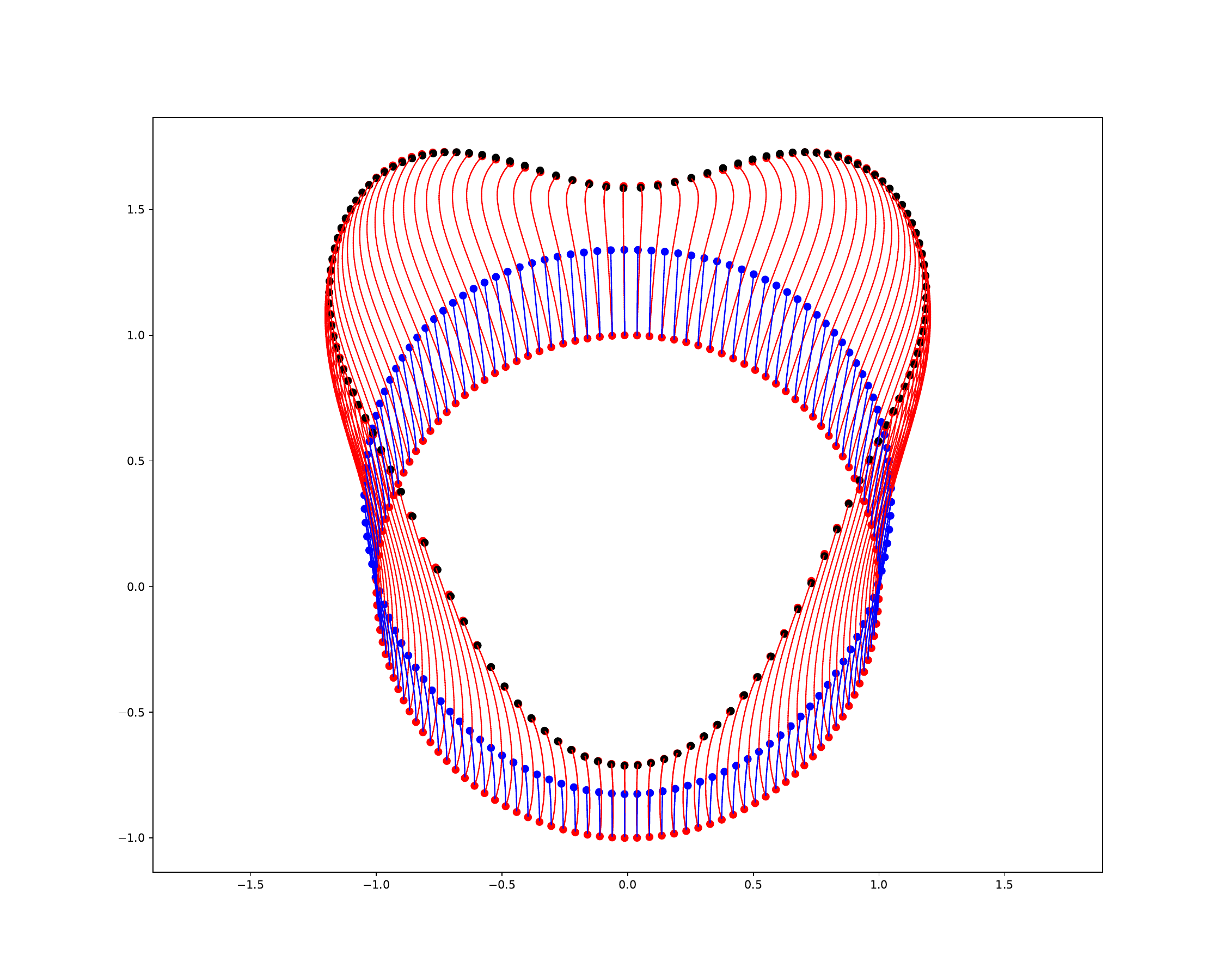}
        \hspace{1cm}
        \includegraphics[trim=100 50 100 50,clip,width=0.45\linewidth]{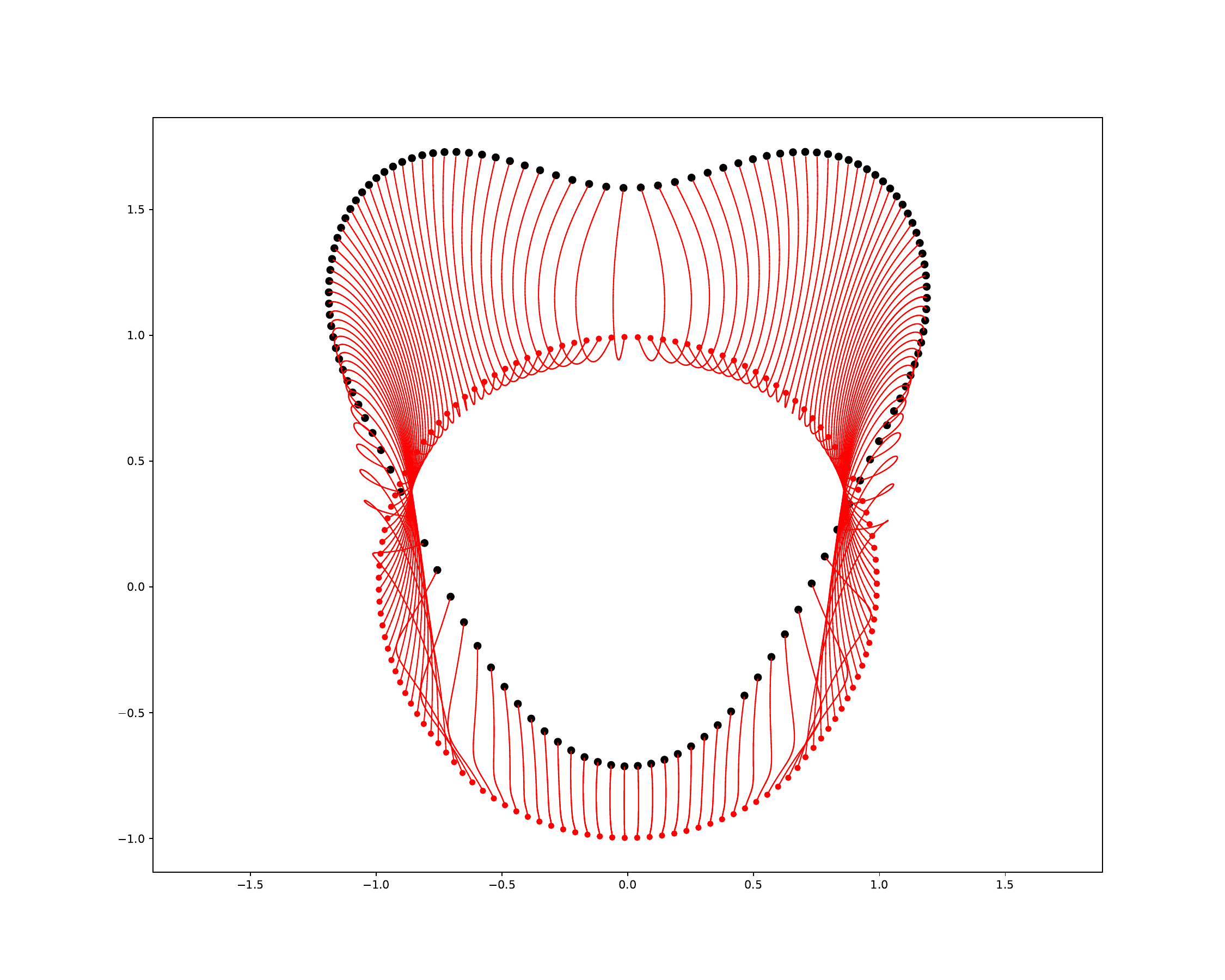}
    \end{subfigure}
    \caption{Left: Most probable paths between two landmark configurations consisting of 128 points. 
Blue curves: Trajectories with no noise.
        Right: corresponding most probable transformation \cite{GrSo22}
    }
        \label{fig:landmarks2}
\end{figure}

\newpage
\appendix
\section{Developed Most probable paths in local coordinates}
In all of the examples below, the manifold has dimension $d$ and summation over $d$ entries are in Latin letters. We assume that the sub-Riemannian structure has rank $J$, and summation over $J$ entries are in Greek letters.

\subsection{General formulas for most probable paths}
We want to give local description for the equation \eqref{MPPu}. Let $\gamma_t$ be a given curve starting at $o$. Let $(x^1, \dots, x^d)$ be a set of local coordinates on $M$ and define $x^j_t =x^j(\gamma_t)$ and $u_t = \sum_{j=1}^d u^j_t \partial_j$. Write $\Gamma_{ij}^k$ for the Christoffel symbols of $\nabla$ in these coordinates, and write $T_{ij}^k$ and $R^{l}_{ijk}$ for local representation of respectively the torsion and the curvature. Let the sub-Riemannian structure $(E,g)$ have rank $J$.
Choose a basis $f_{1,0} ,\dots, f_{d,0}$ of $T_o M$ such that $f_{1,0}, \dots, f_{J,0}$ is an orthonormal basis of $E_o$. Write $\Sigma_t f_{\beta,0} = \sum_{\alpha=1}^J \Sigma_{\alpha\beta,t} f_{\alpha,0}$.
Define $f_{1,t} ,\dots, f_{d,t}$ by parallel transport along $\gamma$. In other words, if we write $f_{r,t} = \sum_{i=1}^d f_{ir,t} \partial_i$, then they are solutions of
$$\dot f_{kr,t} = -\sum_{i,j,k=1}^d \dot x_i f_{jr,t} \Gamma_{ij}^k.$$
Let $f_t^1, \dots, f_t^{d}$ be the corresponding coframe and write $f_t^r = \sum_{j=1}^d f^{rj}_t dx^j$, meaning that $(f_t^{ij})$ is the inverse matrix of $(f_{ij,t})$.
Writing $\lambda_t = \sum_{i=1}^d \lambda_{i,t} f_t^i$ and $\chi_t = \frac{1}{2} \sum_{\alpha,\beta=1}^J \chi^{\alpha\beta}_t f_{\alpha,t} \wedge f_{\beta,t}$, then by evaluation of \eqref{MPPu} along $f_{r,t}$, we obtain
\begin{align*}
\dot \lambda_{r,t} & = - \sum_{i,k,l=1}^d \lambda_{l,t} f^{lk}_t f_{ir,t} \left(\partial_i u^k_t + \sum_{j=1}^d u_{t}^j \Gamma_{ij}^k  + \sum_{j=1}^d \dot x_j T_{ij}^k  \right) \\
& \qquad +  \sum_{i,j,k,l=1}^d \sum_{\alpha, \beta=1}^J \dot x_i \chi^{\alpha\beta} f_{jr,t} f_{k\alpha,t} f_{l\beta,t} R_{ijk}^l ,\\
\dot \chi_t^{\alpha\beta} & = \frac{1}{2} \sum_{\nu=1}^J \lambda_{\nu,t} \left(   \Sigma_{\alpha \nu,t}  \lambda_{\beta,t} -  \Sigma_{\beta \nu,t} \lambda_{\alpha,t} \right), \qquad \chi_T = 0 \\
\dot x_t^k & = u_t^k+\sum_{\alpha,\beta =1}^J \Sigma_{\alpha\beta,t} \lambda_{\beta,t} f_{k\alpha,t}.
\end{align*}

\subsection{Most probable paths on Lie groups}
We give a local description of the equations \eqref{LieMPP}. We can solve the equation in the Lie algebra. Let $A_1, \dots, A_d$ be a basis for $\mathfrak{g}$ such that $A_1, \dots, A_J$ is an orthonormal basis of $\mathfrak{e}$. Write their Lie brackets as $[A_i,A_j] = \sum_{k=1} c_{ij}^k A_k$. Write $\Sigma_t A_\nu = \sum_{\mu=1}^J \Sigma_{\mu \nu,t}$ and let $\alpha_{j,t} = \alpha_t(A_j)$. Then most probable paths are solutions to equations

$$\left\{ \begin{aligned}
\dot \alpha_{j,t} &= - \sum_{k=1}^d \alpha_{k,t} \left(\sum_{\mu,\nu=1}^J \Sigma_{\mu\nu} \alpha_{\nu,t} c^k_{\mu j} + \sum_{i=1}^d a_t^i c_{ij}^k\right) \\
z_t&  = \sum_{\mu,\nu=1}^J \Sigma_{\mu\nu} \alpha_{\mu,t} A_{\nu} + a_t \\
\dot \gamma &= z_t \cdot \gamma_t
\end{aligned} \right.
$$

\subsection{Most probable paths for landmarks}
\label{sec:landmarks_coordinates}
We consider the equations for most probable paths for landmarks, described in Theorem~\ref{th:LMDown}~(b). We consider a curve $\bgamma_t = (\gamma_{1,t}, \dots, \gamma_{n,t})$, and let $\lambda_{r,t}$ be a solution of \eqref{LandmarkDown}. Let $(x^1, \dots, x^d)$ be the local coordinate system on $M$, and define
$$\lambda_{r,t} = \sum_{k=1}^d \lambda_{r,t}^k dx^k,
\qquad \sigma_\alpha = \sum_{k=1}^d \sigma_{\alpha}^k \partial_k, \qquad a_t = \sum_{k=1}^d a_{t}^k \partial_k,$$
and $x^k(\gamma_{r,t}) = x_{r,t}^k$.
Then we can solve
\begin{align*}
\dot \lambda_{r,t}^k & = -\sum_{i=1}^d \sum_{\alpha=1}^J \lambda_{r,t}^i c_{\alpha,t} \partial_k \sigma_{\alpha}^i(\gamma_{r,t})- \sum_{i=1}^d \lambda_{r,t}^i \partial_k a_{t}^i(\gamma_{r,t}), \\
c_{\alpha,t} & = \sum_{i,r=1}^n  \lambda_{r,t}^i \sigma_{\alpha}^i(\gamma_{r,t})  , \\
\dot x_{r,t}^k & = a_{t}^k+ \sum_{j=1}^J c_{\alpha,t} \sigma_{\alpha,t}^k(\gamma_{r,t}).
\end{align*}

\section{Proof of Lemma~\ref{lemma:AffineDev} for absolutely continuous curves} \label{sec:AbsCont}
We will give some details on an alternative proof of Lemma~\ref{lemma:AffineDev}, which makes it more clear that it is still holds for absolutely continuous curves. In order to complete this, we need to define some additional structures on the general frame bundle, which are not used other places of the paper. For proofs of these statements, see e.g. \cite[Section~2.2 and 3.2]{Gro22}.

Let $\GL(d) \to \GL(TM) \stackrel{\pi}{\to} M$ be the general frame bundle over $M$. That is $\GL(TM)_x$ consist of all linear invertible maps $f = (f_1, \dots, f_n):\mathbb{R}^d \to T_x M$, with $\GL(d)$ acting on the right by precomposition. For $f \in \GL(d)$, we define functions $\bar{T}: \GL(TM) \to \wedge^2 (\mathbb{R}^d)^* \otimes \mathbb{R}^d$ and $\bar{R}: \GL(TM) \to \wedge^2 (\mathbb{R}^d)^* \otimes (\mathbb{R}^d)^* \otimes \mathbb{R}^d$ by $\bar{T}(f) = f^{-1} T(f \cdot, f \cdot)$ and $\bar{R}(f) = f^{-1} R(f \cdot, f \cdot) f$.

Consider $f_t = (f_{1,t}, \dots, f_{d,t})$ as a $\nabla$-parallel frame along $\gamma_t$ with $f_0 = f$ and $\dot \gamma_0 =v$. Similar to Section~\ref{sec:OptimalControl}, we define $\dot f_t |_{t=0} = h_{f_0} v$. We also write $H_j(f_0) = h_{f_0} f_{j,0}$, giving us vector fields $H_1, \dots, H_d$ on $\GL(TM)$. If $a = (a^1, \dots, a^d) \in \mathbb{R}^d$, we define $H_a = \sum_{i=1}^d a^i H_i$. Furthermore, for any $A \in \mathfrak{gl}(d)$, define a vector field $\xi_A$ by
$$\xi_A|_f = \frac{d}{dt} f \cdot e^{t A}|_{t=0}.$$
We then have Lie brackets $[H_a, H_b] = - H_{\bar{T}(a,b)} - \xi_{\bar{R}(a,b)}$, $[\xi_A, H_a]= H_{Aa}$ and $[\xi_A, \xi_B]= \xi_{[A,B]}$. If we define one forms $\eta$ and $\theta$ with values in respectively $\mathfrak{gl}(d)$ and $\mathbb{R}^d$, by
$$\eta(\xi_A + H_a) =A, \qquad \theta(\xi_A + H_a) = a,$$ 
we then have relations $\Omega = d\eta + \frac{1}{2} [\eta,\eta]$ and $\Theta = d\theta + [\eta, \theta]$, where
$$\Omega(H_a + \xi_A, H_b + \xi_B) = \bar{R}(a,b), \qquad \Theta(H_a + \xi_A, H_b + \xi_B) = \bar{T}(a,b),$$
with $A,B \in \mathfrak{gl}(d), a,b \in \mathbb{R}^d$. This gives us the following result.
\begin{lemma}
If $\omega, k \in \mathbb{H}^T(T_oM)$, then $\Dev(\omega + sk)$ is well defined and Lemma~\ref{lemma:AffineDev} holds.
\end{lemma}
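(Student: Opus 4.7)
The plan is to recast the development $\Dev_u(\omega+sk)$ as the projection of a horizontal lift on $\GL(TM)$ and to use the structure equations involving $\bar T$ and $\bar R$ to compute the $s$-derivative in a way that applies to any $\omega, k \in \mathbb{H}^T(T_oM)$, not just smooth ones.

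First, I would establish existence of the lift. Fix a reference frame $f_0 \in \GL(TM)_o$ and identify $T_oM$ with $\mathbb{R}^d$ via $f_0$, so that $\dot\omega$ and $\dot k$ become elements of $L^2([0,T], \mathbb{R}^d)$. For each $s$ in a neighborhood of $0$, the horizontal lift $f_t^s$ of $\gamma_t^s := \Dev_u(\omega+sk)_t$ is characterized as the unique absolutely continuous curve in $\GL(TM)$ satisfying $f_0^s = f_0$ and
\begin{equation*}
\dot f_t^s = H_{\dot\omega_t + s \dot k_t}(f_t^s) + hu_t(f_t^s) \qquad \text{for a.e. } t \in [0,T].
\end{equation*}
Because $(a,f) \mapsto H_a(f)$ is smooth and linear in $a$, and $hu_t$ is smooth, the right hand side is locally Lipschitz in $f$ with $L^1$-integrable Lipschitz bound (since $\dot\omega, \dot k \in L^2 \subset L^1$). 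Carathéodory's existence and uniqueness theorem then provides a unique absolutely continuous solution, and $\gamma_t^s := \pi(f_t^s)$ is the required development. This shows that the affine development is well defined on $\mathbb{H}^T(T_oM)$.

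Second, I would differentiate in $s$. Carathéodory-type smooth dependence on the parameter (the equation of variations is itself a Carathéodory linear ODE with $L^2$ coefficients) yields that $s \mapsto f_t^s$ is $C^1$ into $\GL(TM)$. Set $V_t := \partial_s f_t^s|_{s=0}$; this is a vector field along $f_t := f_t^0$ with $V_0 = 0$. Decomposing $V_t = H_{a_t}(f_t) + \xi_{A_t}(f_t)$ with $a_t \in \mathbb{R}^d$ and $A_t \in \gl(d)$, and noting that $\xi_A$ is vertical while $\pi_* H_a(f) = f(a)$, we get
\begin{equation*}
\partial_s \gamma_t^s|_{s=0} = \pi_* V_t = f_t(a_t) = \ptr_t \bigl( f_0(a_t) \bigr),
\end{equation*}
so that writing $v_t := f_0(a_t) \in T_oM$ gives $\partial_s \Dev_u(\omega+sk)_t|_{s=0} = \ptr_t v_t$ with $v_0 = 0$, as required.

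Third, I would derive equation \eqref{ktov} for $v_t$ by computing $\dot V_t$ componentwise via the structure equations. Since $f_t$ is $\nabla$-parallel along $\gamma_t$ we have $\eta(\dot f_t) = 0$ and $\theta(\dot f_t) = \dot\omega_t + \theta(hu_t(f_t))$. Pairing $\theta$ and $\eta$ with the variation field $V$ and using $\Theta = d\theta + [\eta,\theta]$, $\Omega = d\eta + \tfrac12[\eta,\eta]$, a standard manipulation (differentiate $\theta(\dot f_t^s)$ and $\eta(\dot f_t^s)$ in $s$ at $s=0$ and commute with $\partial_t$ using the structure equations) yields
\begin{align*}
\dot a_t &= \dot k_t + A_t \cdot \bigl(\dot\omega_t + \theta(hu_t)\bigr) - \bar T\bigl(\dot\omega_t + \theta(hu_t),\, a_t\bigr) + \theta\bigl(\partial_s hu_t(f_t^s)|_{s=0}\bigr),\\
\dot A_t &= -\bar R\bigl(\dot\omega_t + \theta(hu_t),\, a_t\bigr) + \eta\bigl(\partial_s hu_t(f_t^s)|_{s=0}\bigr),
\end{align*}
with $a_0 = A_0 = 0$. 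Integrating the second equation and substituting into the first, then translating everything back to $M$ via $f_t$ (so that $\bar T$ becomes $T_{\ptr_t}$, $\bar R$ becomes $R_{\ptr_r}$, and $\theta(\partial_s hu_t(f_t^s)|_{s=0})$ becomes $\ptr_t^{-1} \nabla_{\ptr_t v_t} u_t$), produces precisely \eqref{ktov}. Every step is an algebraic identity between locally integrable functions, so no pointwise differentiability of $\omega$ or $k$ is needed.

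The main obstacle is the bookkeeping in Step 3: one has to isolate the horizontal and vertical parts of $\partial_s hu_t(f_t^s)|_{s=0}$ and verify that substituting the integrated $A_t$ into the $a_t$-equation reproduces exactly the nested integral $\int_0^t\!\int_0^s R_{\ptr_r}(\cdots)\dot\omega_s\,drds$ of \eqref{ktov}. This reduces to the same computation carried out in the smooth proof of Lemma~\ref{lemma:AffineDev}, but now justified a.e.\ rather than pointwise, the Carathéodory ODE framework and the structure equations doing all the work that transversality of smooth variations did previously.
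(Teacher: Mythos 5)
Your proposal follows essentially the same route as the paper's appendix proof: lift the development to the frame bundle $\GL(TM)$, solve $\partial_t F(s,t) = H_{f_0^{-1}(\dot\omega_t + s\dot k_t)}(F(s,t)) + hu_t(F(s,t))$ for $L^2$ controls, and extract the variation equations for the horizontal and vertical components $(v_t, A_t)$ by pairing the structure equations $\Theta = d\theta + [\eta,\theta]$ and $\Omega = d\eta + \tfrac{1}{2}[\eta,\eta]$ with $(\partial_s,\partial_t)$, then integrating to recover \eqref{ktov}. The only difference is presentational: you make explicit the Carath\'eodory well-posedness and $C^1$-dependence on $s$ that the paper leaves implicit (and your extra vertical term $\eta(\partial_s hu_t(f_t^s)|_{s=0})$ in the $\dot A_t$ equation in fact vanishes, since $\eta(F_*\partial_t) \equiv 0$ along the horizontal lift), so the argument is correct and matches the paper's.
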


\begin{proof}
If $\omega, k \in \mathbb{H}^T(TM)$, then $\dot \omega + s \dot  k_t$ is in $L^2$, and we can define $F:(-\ve, \ve) \times [0,T] \to \GL(M)$ as the solution of $F(s,0) = f_0 \in \GL(TM)_o$ and
$$\partial_t F(s,t) = H_{f_0^{-1}(\dot \omega_{j,t} + s \dot k_{j,t})}(F(s,t)) + hu_t(F(s,t)).$$
Write $f_t = F(0,t)$ and
$$\partial_s F(s, t) |_{s=0} = H_{f_0^{-1} v_t} + \xi_{f_0^{-1} A_t f_0}.$$
with $v_t$ and $A_t$ being curves in respectively $T_oM$ and $T^*_o M \otimes T_o M$ that vanish at time $t=0$.
Then $\pi(F(t,s)) = \gamma_t^s = \Dev_u(\omega + sk)_t$, $\ptr_t = f_t \circ f_0^{-1}$ and $\partial_s \gamma_t^s |_{s =0} = \ptr_t v_t$. We furthermore see that
\begin{align*}
    & (F^* \Omega)(\partial_s, \partial_t) |_{s=0} = \bar{R}_{f_t}(f_t^{-1} v_t, f_t^{-1} (\dot \omega + u_{t, \ptr_t}))  = f_0^{-1} R_{\ptr_t}(v_t, \dot \omega + u_{t, \ptr_t}) \\
    & = (F^* d\eta)(\partial_s, \partial_t)|_{s=0} + [(F^* \eta)(\partial_s), (F^* \eta)(\partial_t)]|_{s=0} - [(F^*\eta)(\partial_t),F]\\
    & = - \partial_t F^*\eta(\partial_s)|_{s=0} = - \partial_t f_0 A_t f_0^{-1} 
\end{align*}
and
\begin{align*}
    & (F^* \Theta)(\partial_s, \partial_t) |_{s=0} = \bar{T}_{f_t}(f_t^{-1} v_t, f_t^{-1} (\dot \omega + u_{t, \ptr_t})) \\
    & = f_0^{-1} T_{\ptr_t}(v_t, \dot \omega + u_{t, \ptr_t}) \\
    & = (F^* d\theta)(\partial_s, \partial_t)|_{s=0} + (F^* \eta)(\partial_s) (F^* \theta)(\partial_t)|_{s=0} \\
    & \qquad - (F^* \eta)(\partial_t) (F^* \theta)(\partial_s)|_{s=0} \\
    & = f_0^{-1} \dot k_t - f_0^{-1} \dot v_t + f_0^{-1} A_t ( \dot \omega_t + u_{t,\ptr_t}).
\end{align*}
It follows that we have equations $v_0 =0$, $A_0 =0$
\begin{align*}
    \dot A_t & = R_{\ptr_t}( \dot \omega + u_{t, \ptr_t}, v_t), \\
    \dot k_t & = \dot v_t - T_{\ptr_t}(\dot \omega + u_{t, \ptr_t}, v_t) - A_t( \dot \omega_t + u_{t,\ptr_t}),
\end{align*}
which is the statement of Lemma~\ref{lemma:AffineDev}.
\end{proof}

\bibliographystyle{abbrv}
\bibliography{Bibliography}

\end{document}